\DeclareMathOperator{\add}{add}
\DeclareMathOperator{\Hom}{Hom}
\DeclareMathOperator{\End}{End}
\DeclareMathOperator{\Ext}{Ext}
\DeclareMathOperator{\modf}{mod}
\newtheorem{definition}{Definition}
\newtheorem{theorem}[definition]{Theorem}
\newtheorem{lemma}[definition]{Lemma}
\newtheorem{corollary}[definition]{Corollary}
\tikzset{vertex/.style={circle,fill=black,inner sep=1pt,outer sep=2pt},
         mvertex/.style={rectangle,draw=black,thick,inner sep=2pt,outer sep=2pt},
         tvertex/.style={inner sep=1pt,font=\scriptsize},
         unvertex/.style={circle,fill=white,draw=white,inner sep=1pt},
         bvertex/.style={circle,fill=white,draw=white,inner sep=1pt, font=\scriptsize},
         fill0/.style={fill=black!0,draw=black!0},
         fill1/.style={fill=black!15,draw=black!15},
         fill2/.style={fill=black!30,draw=black!30},
         fill12/.style={fill=black!45,draw=black!45},
         fill3/.style={fill=black!60,draw=black!60},
         >=stealth',
         leadsto/.style={-angle 90,decorate,decoration=snake,very thick},
         cut/.style={decorate,decoration=saw,very thick}}
\newcommand{\replacevertex}[3][fill=white,draw=white]
 {
  \node at #2 [#1,circle,inner sep=1pt] {};
  \node #2 at #2 #3;
 }
\providecommand*{\twoheadrightarrowfill@}{%
  \arrowfill@\relbar\relbar\twoheadrightarrow
  }
\providecommand*{\xtwoheadrightarrow}[2][]{%
  \ext@arrow 0579\twoheadrightarrowfill@{#1}{#2}%
  }
\begin{document}
\title[Periodicity of cluster tilting objects]{Periodicity of cluster tilting objects}

\author{Benedikte Grimeland}

\begin{abstract}
Let \(\mathcal T\) be a locally finite triangulated category with an autoequivalence \(F\) such that the orbit category \(\mathcal T /F\) is triangulated. We show that if \(\mathcal X\) is an \(m\)-cluster tilting subcategory, then the image of \(\mathcal X\) in  \(\mathcal T/F\) is an \(m\)-cluster tilting subcategory if and only if \(\mathcal X\) is \(F\)-perodic. 

We show that for path-algebras of Dynking quivers \(\Delta\) one may study the periodic properties of \(n\)-cluster tilting objects in the \(n\)-cluster category \(\mathcal C_n(k\Delta)\) to obtain information on periodicity of the preimage as \(n\)-cluster tilting subcategories of \(\mathcal D^b(k\Delta)\). 

Finally we classify the periodic properties of all \(2\)-cluster tilting objects \(T\) of Dynkin quivers, in terms of symmetric properties of the quivers of the corresponding cluster tilted algebras \(\End_{\mathcal C_2}(T)^{op}\). This gives a complete overview of all \(2\)-cluster tilting objects of all orbit categories of Dynkin diagrams. 
\end{abstract}
\maketitle

\section{Introduction}
Cluster categories were introduced as a categorification of the combinatorics in cluster algebras, which were introduced in \cite{FZ}. This was done for path algebras of Dynkin diagrams of type A by \cite{ccs}, and more general for finite-dimensional hereditary algebras by \cite{bmrrt}. With this category in place, the authors of \cite{bmrrt} were able to generalize the notion of tilting to the notion of cluster tilting. In the cluster category one has cluster tilting objects, which give rise to cluster tilted algebras. The notion of a cluster tilting object was however generalized again, see \cite{Iyama1} and \cite{Iyama2}, to the notion of a cluster tilting subcategory of a triangulated category, and \(n\)-cluster tilting subcategory of a triangulated category. 

Among the \(n\)-cluster tilting subcategories, the case \(n=2\) is of special interest, as results by \cite{bmr} 
and \cite{k-zhu} show that quotients of triangulated categories by a \(2\)-cluster tilting subcategory produces an abelian category with enough projectives. However \(n\)-cluster tilting subcategories in general are also important in connection with higher dimensional Auslander-Reiten theory (\cite{Iyama1},\cite{Iyama2}).

In this paper we study how periodicity of \(n\)-cluster tilting subcategories under certain functors in a triangulated category gives rise to \(n\)-cluster tilting subcategories in orbit categories of the triangulated category. In particular we show that for representation-finite hereditary algebras \(H\), the periodicity of an \(n\)-cluster tilting object of the \(m\)-cluster tilting category \(\mathcal C_m(H)\) under the suspension functor and the Auslander-Reiten translate carries over to the preimage \(\mathcal X\) of the bounded derived category \(\mathcal D^b(H)\). As there is a one-to-one correspondence between \(m\)-cluster tilting subcategories of \(\mathcal D^b(H)\) and \(m\)-cluster tilting objects of \(\mathcal C_m(H)\), this enables us to study the periodic properties within the cluster categories and still obtain the necessary information about periodicity in the bounded derived category \(\mathcal D^b(H)\). 

The mutation classes of quivers of cluster tilted algebras of representation-finite hereditary algebras have been described in \cite{DagfinnA}, \cite{DagfinnD}, and \cite{typeE}. Based on the symmetry properties of the quivers of the corresponding cluster tilted algebras, we classify the periodic properties of all \(2\)-cluster tilting objects of representation-finite hereditary algebras as an application of the method described above. This gives a complete overview of all \(2\)-cluster tilting objects in all triangulated orbit categories of representation-finite hereditary algebras, with an easy way to determine all the indecomposable summands of the object. In regard to this, it is interesting to note two things. First, in many cases the combinatorics of type D is more involved than the combinatorics of type A. However this is not true for this case, the combinatorics of the proof for the 3-symmetric cases of type A is conceptuallly not any easier than the proofs of type D. Second, when considering cases of Dynkin type \(D_n\), the parity of \(n\) affects the results in a substantial way.

Earlier results on the classification of orbit categories with a \(2\)-cluster tilting object are given in \cite{bikr}, \cite{bpr} and for a special case for type E in \cite{ladkani}.  These results covers \(2\)-cluster tilting objects in \(2\)-Calabi Yau triangulated orbit categories, and the method used is to study orbit-categories of the \(2\)-cluster category. As described above, our method can be applied regardless of the Calabi-Yau dimension of the category. Moreover, our method in itself provide a combinatorial means for finding explicit descriptions of any \(2\)-cluster tilting object \(T\) in any orbit category in terms of its indecomposable summands, and hence also the quiver of the corresponding cluster tilted algebra. Furthermore, as noticed above, our method may be applied to any known \(m\)-cluster tilting object \((m>2)\) of a triangulated orbit category,  to find other orbit categories with \(m\)-cluster tilting objects arising from the same preimage in the bounded derived category.

The paper is organized as follows: Section \ref{background} contains the basic definitions from cluster tilting theory that will be used throughout the paper. In section \ref{theory} we show how periodicity of cluster tilting subcategories in a triangulated category \(\mathcal T\) determines cluster tilting subcategories in triangulated orbit categories of \(\mathcal T\). Using properties of the bounded derived category for representation-finite hereditary algebras we show that some properties of periodic cluster tilting subcategories of cluster categories carry over to the bounded derived category. Based on this we describe our main method, which is applied in later sections.

Section \ref{quivers} give a short overview of the quivers of cluster tilted algebras of Dynkin type \(A\) and \(D\), and also states results connected to quivers of cluster tilted algebras, that will be used in sections \ref{sectA} and \ref{typeD}. Then in section \ref{sectA} we use the method described in section \ref{theory}, and classify for all \(2\)-cluster tilting subcategories of \(\mathcal D^b(A_n)\) which functors they are periodic under. The classification is given in terms of symmetric properties of the quivers of the corresponding \(2\)-cluster tilting subcategories in the cluster category \(\mathcal C_2(A_n)\). In section \ref{typeD} we give the corresponding results for \(2\)-cluster tilting subcategories of type D. Type \(E\) is studied in section \ref{sectE}. Finally we consider the Euclidean and wild cases in section \ref{wildEuclid}.

\section{Background}\label{background}

For more background on the following definitions see for example \cite{Iyama}.
\begin{definition}
Let $\mathcal{C}$ be a category and $\mathcal{T}$ a full subcategory of $\mathcal{C}$. Then $\mathcal{T}$ is called a functorially finite subcategory if it is both covariantly finite and contravariantly finite. That is for each object $X\in\mathcal{C}$ there are objects $T,T'\in\mathcal{T}$ with morphisms  and $f:X\rightarrow T'$ and $g:T\rightarrow X$ such that for any object $T''\in \mathcal{T}$ there are epimorphisms $\Hom_{\mathcal{C}}(T',T'') \xtwoheadrightarrow{\Hom_{\mathcal{C}}(f,T'')} \Hom_{\mathcal{C}}(X,T'')$ and $\Hom_{\mathcal{C}}(T'',T) \xtwoheadrightarrow{\Hom_{\mathcal{C}}(T'',g)} \Hom_{\mathcal{C}}(T'',X)$.
\end{definition}

\begin{definition}
A functorially finite subcateogory $\mathcal{T}$ is called an $r$-cluster tilting subcategory of $\mathcal{C}$ if 
\begin{align*}
\mathcal{T}=&\left\{X\in\mathcal{C}|\Ext^i_{\mathcal{C}}(\mathcal{T},X)=0 \text{\ for any\ }0<i<r \right\}\\
						=&\left\{X\in\mathcal{C}|\Ext^i_{\mathcal{C}}(X,\mathcal{T})=0 \text{\ for any\ }0<i<r \right\}\\
\end{align*}
If $\mathcal{T}=\add T$ for some object $T$, then $T$ is called an $r$-cluster tilting object. 
\end{definition}

\begin{definition}\label{locallyFinite}
We call a triangulated category locally finite if for an indecomposable object \(X\) there is only a finite number of isomorphism classes of indecomposable objects \(Y\) such that \(\Hom_{\mathcal T}(X,Y)\neq0\). 
\end{definition}

Note that definition \ref{locallyFinite} implies its own dual (see \cite{ZhuXiao},\cite{amiot}), i.e. for any indecomposable object \(Y\) there are only a finite number of isomorphism classes of indecomposable objects \(X\) such that \(\Hom_{\mathcal T}(X,Y)\neq0\). Also note that any subcategory of a locally finite triangulated category is both contravariantly finite and covariantly finite.

Next we will define what we mean by an orbit category. One can define orbit categories of all additive categories (see \cite{keller}), however we will only use the definition for triangulated categories and therefore give the definition in this context. See for example also \cite{bmrrt}.

\begin{definition}
Let \(\mathcal T\) be a triangulated cateogory with an autoequivalence \(F:\mathcal T\rightarrow \mathcal T\). The orbit cateogory \(\mathcal O_F(\mathcal T)\) has the same objects as \(\mathcal T\). The morphisms \(\Hom_{\mathcal O}(X,Y)\) between two objects in \(\mathcal O_F(\mathcal T)\) are in bijection with the set \(\oplus \Hom_{\mathcal T}(X,F^nY)\). 
\end{definition}

In later sections we will mostly consider orbit categories of the bounded derived category \(\mathcal D^b(H)\) where \(H\) is a representation-finite hereditary algebra. We define the \(m\)-cluster category of \(H\) to be \(\mathcal C_m(H):=\mathcal D^b(H)/\tau^-\left[m-1\right]\) for \(m\geq 2\). In particular it is known (\cite{keller},\cite{bmrrt}) that the \(m\)-cluster category of an hereditary algebra is triangulated, and \(\pi_m:\mathcal D^b(H)\rightarrow  \mathcal C_m(H)\) is a triangle functor. 
Note that in a triangulated category with only finitely many isomorphism classes of indecomposable objects, a cluster tilting subcategory always gives rise to a cluster tilting object. We will often use the term cluster tilting object about a cluster tiling subcategory in such a setting. Furthermore, we note that the suspension functor in the bounded derived category of a representation-finite hereditary algebra \(\mathcal D^b(H)\) will be denoted \(\left[1\right]\), whereas the suspension functor in a general triangulated category will be denoted by \(\Sigma\). Throughout the paper \(\tau\) always refers to the AR-translate of the category in question. 

\section{Periodicity determines cluster tilting subcategories}\label{theory}

In this section we will study how periodicity under a functor $F$, will determine for an $m$-cluster tilting subcategory of $\mathcal D^b(H)$ in which orbit categories $\mathcal O_F$ the image will again be an $m$-cluster tilting subcategory. The first three results we are able to state in a somewhat more general setting, that is, for a locally finite triangulated category \(\mathcal T\) in general, rather than for just \(\mathcal D^b(H)\). For the remaining results however we will need some properties that are particular for \(\mathcal D^b(H)\).

These results will be the foundation of the method applied throughout the rest of the paper.

Starting out is a lemma showing that the preimage of an $m$-cluster tilting subcategory of an orbit category $\mathcal O_F$ is an $m$-cluster tilting subcategory of $\mathcal T$: 

\begin{lemma}\label{PreimCltilt}
Let $\mathcal T$ be a locally finite triangulated category, and $F$ an autoequivalence such that the orbit category $\mathcal O_F:=\mathcal T/F$ is triangulated and \(\pi_F:\mathcal T\rightarrow \mathcal O_F\) is a triangle functor. If $\mathcal Y$ is an $m$-cluster tilting subcategory of $\mathcal O_F$ then $\mathcal X:=\pi^{-1}_F(\mathcal Y)$ is an $m$-cluster tilting subcategory of $\mathcal T$.
\end{lemma}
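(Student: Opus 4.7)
The strategy is to transfer the $m$-cluster tilting property from $\mathcal{Y}$ to $\mathcal{X}$ using the orbit-category identity
\[
\Ext^i_{\mathcal{O}_F}(\pi_F X, \pi_F Y) \;\cong\; \bigoplus_{n\in\mathbb{Z}} \Ext^i_{\mathcal{T}}(X, F^n Y),
\]
which I would first derive from the definition of morphisms in $\mathcal{O}_F$ together with the fact that $\pi_F$ is a triangle functor (so the shift in $\mathcal{O}_F$ agrees with the shift in $\mathcal{T}$ under $\pi_F$). A second preliminary observation is that $\mathcal{X}$ is closed under both $F$ and $F^{-1}$: since $\pi_F(F^{\pm 1}X) \cong \pi_F(X)$ in $\mathcal{O}_F$, it lies in $\mathcal{Y}$ whenever $\pi_F(X)$ does.

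I would then verify the two Ext-orthogonality conditions defining an $m$-cluster tilting subcategory. One direction is immediate: for $X,X' \in \mathcal{X}$, the vanishing of $\Ext^i_{\mathcal{O}_F}(\pi_F X', \pi_F X)$ for $0<i<m$ forces every summand in the display above to vanish, in particular the $n=0$ one, yielding $\Ext^i_{\mathcal{T}}(X',X)=0$. For the reverse inclusion, suppose $X\in\mathcal{T}$ satisfies $\Ext^i_{\mathcal{T}}(\mathcal{X},X)=0$ for $0<i<m$; I want to conclude that $\pi_F X\in\mathcal{Y}$, for which it suffices, by the $m$-cluster tilting property of $\mathcal{Y}$, to show $\Ext^i_{\mathcal{O}_F}(\mathcal{Y},\pi_F X)=0$ in the same range. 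Given any $X'\in\mathcal{X}$ and any $n\in\mathbb{Z}$, closure of $\mathcal{X}$ under $F^{-n}$ gives $F^{-n}X'\in\mathcal{X}$, and since $F$ is a triangle autoequivalence I get
\[
\Ext^i_{\mathcal{T}}(X', F^n X) \;\cong\; \Ext^i_{\mathcal{T}}(F^{-n}X', X) \;=\; 0.
\]
Summing over $n$ and applying the display above produces the required vanishing in $\mathcal{O}_F$. The dual argument handles the other Ext-orthogonality condition.

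Functorial finiteness of $\mathcal{X}$ in $\mathcal{T}$ comes for free: by the remark following Definition~\ref{locallyFinite}, every subcategory of a locally finite triangulated category is both covariantly and contravariantly finite.

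The only delicate step is the converse direction above, where one must bootstrap vanishing of a single $\Ext$-summand in $\mathcal{T}$ to vanishing of the entire orbit sum. This is precisely where closure of $\mathcal{X}$ under all powers of $F$, combined with the autoequivalence, is indispensable; everything else is a direct unpacking of the definitions.
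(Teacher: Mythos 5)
Your proposal is correct and follows essentially the same route as the paper's proof: both use the orbit-category Hom decomposition $\Hom_{\mathcal O_F}(\pi_F X,\pi_F Y)\cong\bigoplus_n\Hom_{\mathcal T}(X,F^nY)$, the closure of $\mathcal X=\pi_F^{-1}(\mathcal Y)$ under powers of $F$ to kill every summand of the orbit sum, and the defining Ext-vanishing characterization of $\mathcal Y$ in $\mathcal O_F$ to conclude $\pi_F(Z)\in\mathcal Y$, with functorial finiteness coming from local finiteness of $\mathcal T$. The only cosmetic difference is that you shift $F$ onto the second argument and move it across with $F^{-n}$, whereas the paper writes the sum directly as $\bigoplus_p\Hom_{\mathcal T}(F^p(Y),\Sigma^iZ)$; these are the same computation.
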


\begin{proof}
Since $\mathcal T$ is locally finite the subcategory $\mathcal X$ is functorially finite. Also since the functor $\pi_F:\mathcal T \rightarrow \mathcal O_F$ is a faithful triangle functor we have the following inclusions:
$$\pi^{-1}_F(\mathcal Y)\subset\left\{Z\in\mathcal T|\Ext^i_\mathcal T(\pi^{-1}_F(\mathcal Y),Z)=0 \text{ for } 0<i<m\right\}$$

$$\pi^{-1}_F(\mathcal Y)\subset\left\{Z\in\mathcal T|\Ext^i_\mathcal T(Z,\pi^{-1}_F(\mathcal Y))=0 \text{ for } 0<i<m\right\}$$
 
We need to show that in fact we have equality, not just inclusion. Let \(Z\in\mathcal T\) be an object such that \(\Ext_{\mathcal T}^i(\pi_F^{-1}(\mathcal Y),Z)=0\) for \(0<i<m\), i.e. \(\Hom_{\mathcal T}(\pi_F^{-1}(\mathcal Y),\Sigma^iZ)=0\) for \(0<i<m\).
We want to show that \(Z\in\pi^{-1}_F(\mathcal Y)\), i.e. that \(\pi_F(Z)\in\mathcal Y\). 

Since \(\mathcal Y\) is an \(m\)-cluster tilting subcategory of \(\mathcal O_F\) we have that \[\mathcal Y=\left\{?\in\mathcal O_F|\Ext^i_{\mathcal O_F}(\mathcal Y,?)=0 \text{\ for } 0<i<m\right\}.\]

Let \(\overline Y\) be an object of \(\mathcal Y\) and let \(Y\) be its preimage in \(\pi^{-1}_F(\mathcal Y). \)
Then we have the following:
\begin{align*}
\Ext^i_{\mathcal O_F}(\overline Y,\pi_F(Z))&=\Hom_{\mathcal O_F}(\overline Y, \Sigma^i\pi_F(Z))\\
&=\Hom_{\mathcal O_F}(\pi_F(Y),\pi_F(\Sigma^iZ))\\
&=\bigoplus \Hom_{\mathcal T}(F^p(Y),\Sigma^i Z)=0
\end{align*}
for \(0<i<m\). Hence we  conclude that \(\pi_F(Z)\in\mathcal Y\).
The last inclusion can be shown in a similar way.
\end{proof}


The next result states a very important fact, if an $m$-cluster tilting category of a triangulated category $\mathcal T$ is periodic under a functor $F$, then the image in the orbit category $\mathcal O_F$ is an $m$-cluster tilting subcategory. 

\begin{lemma}\label{ImCltilt}
Let $\mathcal T$ be a locally finite triangulated category and $F$ an autoequivalence of $\mathcal T$ such that $\mathcal O_F:=\mathcal T/F$ is a triangulated category and \(\pi_F:\mathcal T\rightarrow \mathcal O_F\) is a triangle functor.  If $\mathcal X$ is an $F$-periodic $m$-cluster tilting subcategory of $\mathcal T$, then $\mathcal Y:=\pi_F(\mathcal X)$ is an $m$-cluster tilting subcategory of $\mathcal O_F$.
\end{lemma}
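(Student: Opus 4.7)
The plan is to mimic the structure of Lemma \ref{PreimCltilt}, but pushing information \emph{forward} through $\pi_F$ instead of pulling it back. The engine is the orbit-category Hom formula
\[
\Hom_{\mathcal O_F}(\pi_F A,\pi_F B) \;=\; \bigoplus_{p\in\mathbb Z}\Hom_{\mathcal T}(A,F^p B),
\]
together with the fact that $F$-periodicity of $\mathcal X$ means $F^p\mathcal X = \mathcal X$ for every $p$, and that $\pi_F$ is a triangle functor so it commutes with $\Sigma$ (and $F$ commutes with $\Sigma$ as well). First I would verify the Ext characterization. Given $\overline X = \pi_F X$ with $X\in\mathcal X$ and $\overline Z = \pi_F Z$ with $Z\in\mathcal T$, using the formula above for $\Sigma^i$ in place of $\mathrm{id}$, one has
\[
\Ext^i_{\mathcal O_F}(\overline X,\overline Z) \;=\; \bigoplus_{p}\Ext^i_{\mathcal T}(X,F^p Z).
\]
If $Z\in\mathcal X$, then by $F$-periodicity $F^p Z\in\mathcal X$, and each summand vanishes for $0<i<m$ by the cluster tilting property of $\mathcal X$ in $\mathcal T$; this shows $\mathcal Y$ lies inside the Ext-orthogonal. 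Conversely, if $\Ext^i_{\mathcal O_F}(\mathcal Y,\overline Z)=0$ for $0<i<m$, then specializing the displayed equation to $p=0$ gives $\Ext^i_{\mathcal T}(X,Z)=0$ for all $X\in\mathcal X$, so $Z\in\mathcal X$ by the cluster tilting condition for $\mathcal X$, hence $\overline Z\in\mathcal Y$. The dual side is symmetric.

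For functorial finiteness, I would produce approximations by descent. Given $\overline W\in\mathcal O_F$ lifted by $W\in\mathcal T$, choose a right $\mathcal X$-approximation $g\colon X\to W$ in $\mathcal T$ (which exists since $\mathcal T$ is locally finite, hence every subcategory is functorially finite). I claim $\overline g\colon\overline X\to\overline W$ is a right $\mathcal Y$-approximation. An arbitrary morphism $\overline{X'}\to\overline W$ with $X'\in\mathcal X$ is a finite sum $\sum_p h_p$ with $h_p\in\Hom_{\mathcal T}(X',F^p W)$. For each $p$, since $F^p g\colon F^pX\to F^pW$ is a right $F^p\mathcal X$-approximation and $F^p\mathcal X = \mathcal X$ by periodicity, $h_p$ factors as $F^p(g)\circ h_p'$ with $h_p'\in\Hom_{\mathcal T}(X',F^pX)$. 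Under the composition rule in $\mathcal O_F$, the element $\sum_p h_p'\in\Hom_{\mathcal O_F}(\overline{X'},\overline X)$ then composes with $\overline g$ to give back $\sum_p h_p$. The left approximation property follows dually from a left $\mathcal X$-approximation of $W$.

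The main obstacle I expect is keeping track of the indexing conventions in the orbit-category Hom formula, in particular making sure that $F$-periodicity of $\mathcal X$ is used in exactly the right place: it is what allows us to regard $F^p g$ as again an $\mathcal X$-approximation in the functorial finiteness step, and to know that $F^p Z \in \mathcal X$ when $Z\in\mathcal X$ in the Ext step. Everything else is formal manipulation of the fact that $\pi_F$ is a triangle functor and that the direct sum in the orbit Hom has only finitely many nonzero terms thanks to local finiteness of $\mathcal T$.
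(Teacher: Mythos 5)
Your argument is correct and follows essentially the same route as the paper: the forward inclusion via the orbit-category Hom formula $\bigoplus_p\Hom_{\mathcal T}(X_1,\Sigma^iF^pX_2)$ together with $F^p\mathcal X=\mathcal X$, and the reverse inclusion by isolating the $p=0$ summand and invoking the cluster tilting condition in $\mathcal T$. The only difference is that you spell out the functorial finiteness of $\mathcal Y$ by descending approximations from $\mathcal T$, where the paper simply cites local finiteness; both are fine.
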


\begin{proof}
First let $\pi_F(X_1)$ and $\pi_F(X_2)$ be two objects of $\pi_F(\mathcal X)=\mathcal Y$. Then we have:
\begin{center}
\begin{align*}
\Hom_{\mathcal O_F}(\pi_F(X_1),\Sigma^i\pi_F(X_2))&=\Hom_{\mathcal O_F}(\pi_F(X_1),\pi_F(\Sigma^iX_2))\\
&=\oplus_{p\in\mathbb Z}\Hom_\mathcal T(X_1,\Sigma^iF^p(X_2))=0
\end{align*}
\end{center}
for \(0<i<m\). Where the last expression is zero since $F^p(X_2)\in F^p\mathcal X=\mathcal X$. This gives the inclusions:
$$\pi_F(\mathcal X)\subset\left\{\overline Z \in \mathcal O_F|\Ext^i_{\mathcal O_F}(\pi_F(\mathcal X),\overline Z)=0 \text{ for } 0<i<m\right\}$$
$$\pi_F(\mathcal X)\subset\left\{\overline Z\in\mathcal O_F|\Ext^i_{\mathcal O_F}(\overline Z,\pi_F(\mathcal X))=0\text{ for } 0<i<m\right\}.$$

For the remaining inclusions, start with an object $\overline Z$ of $\mathcal O_F$ such that $\Ext^i_{\mathcal O_F}(\overline Z,\pi_F(\mathcal X))=0$ for $0<i<m$, i.e. $\Hom_{\mathcal O_F}(\overline Z,\Sigma^i\pi_F(\mathcal X))=0$ for $0<i<m$. Let \(X\) be any object of \(\mathcal X\) and \(\overline X:=\pi_F(X)\). Then we have 
\begin{center}
\begin{align*}
0=\Hom_{\mathcal O_F}(\overline Z,\Sigma^i\overline X)&=\Hom_{\mathcal O_F}(\pi_F(Z),\Sigma^i\pi_F(X))\\
&=\Hom_{\mathcal O_F}(\pi_F(Z),\pi_F(\Sigma^i X))\\
&\supseteq \Hom_\mathcal T(Z,\Sigma^i X)\\
\end{align*}
\end{center}
Therefore $Z\in\mathcal X=\pi^{-1}(\mathcal Y)$ and $\overline Z=\pi_F(Z)\in\mathcal Y$. The last inclusion can be shown in a similar way. Functorial finiteness of \(\mathcal Y\) follows from \(\mathcal T\) being locally finite.
\end{proof}

\begin{theorem}\label{correspondence}
Let \(\mathcal T\) be a locally finite triangulated category and \(F\) an autoequivalence such that \(\mathcal O_F:=\mathcal T/F\) is triangulated and the projection functor  \(\pi_F:\mathcal T\rightarrow \mathcal O_F\) is a triangle functor. Then there is a bijection between the set of \(F\)-periodic \(m\)-cluster tilting subcategories of \(\mathcal T\) and \(m\)-cluster tilting subcategories of \(\mathcal O_F\).
\end{theorem}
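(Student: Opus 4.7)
The plan is to exhibit a mutually inverse pair of maps, namely $\mathcal{X}\mapsto \pi_F(\mathcal{X})$ and $\mathcal{Y}\mapsto \pi_F^{-1}(\mathcal{Y})$, between the two sets described in the theorem. Lemmas \ref{ImCltilt} and \ref{PreimCltilt} already do most of the heavy lifting: the first guarantees that $\pi_F(\mathcal{X})$ is an $m$-cluster tilting subcategory of $\mathcal{O}_F$ whenever $\mathcal{X}$ is $F$-periodic and $m$-cluster tilting in $\mathcal{T}$, and the second guarantees that $\pi_F^{-1}(\mathcal{Y})$ is $m$-cluster tilting in $\mathcal{T}$. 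So only two things remain: to check $F$-periodicity of the preimage, and to check that the two assignments are mutually inverse.

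For $F$-periodicity of $\pi_F^{-1}(\mathcal{Y})$, I would use the defining property of the orbit category: for any $X\in\mathcal{T}$ one has $\pi_F(FX)=\pi_F(X)$, so $FX\in\pi_F^{-1}(\mathcal{Y})$ if and only if $X\in\pi_F^{-1}(\mathcal{Y})$. Hence $F\pi_F^{-1}(\mathcal{Y})=\pi_F^{-1}(\mathcal{Y})$, i.e. the preimage is $F$-periodic, so the assignment $\mathcal{Y}\mapsto\pi_F^{-1}(\mathcal{Y})$ lands in the correct set.

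Next I would verify $\pi_F(\pi_F^{-1}(\mathcal{Y}))=\mathcal{Y}$ and $\pi_F^{-1}(\pi_F(\mathcal{X}))=\mathcal{X}$. The first identity is essentially formal: since $\pi_F$ is surjective on objects, any $\overline{Y}\in\mathcal{Y}$ has the form $\pi_F(Y)$ for some $Y\in\mathcal{T}$, which by definition lies in $\pi_F^{-1}(\mathcal{Y})$; the reverse inclusion is clear from the definition of preimage. For the second identity, the inclusion $\mathcal{X}\subseteq\pi_F^{-1}(\pi_F(\mathcal{X}))$ is immediate. For the reverse, take $Z\in\pi_F^{-1}(\pi_F(\mathcal{X}))$, so $\pi_F(Z)\cong\pi_F(X)$ in $\mathcal{O}_F$ for some $X\in\mathcal{X}$; the structure of morphisms in the orbit category then gives $Z\cong F^pX$ in $\mathcal{T}$ for some $p\in\mathbb{Z}$ (on the indecomposable level, and extended additively), and then $F$-periodicity of $\mathcal{X}$ forces $F^pX\in\mathcal{X}$, hence $Z\in\mathcal{X}$.

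The only potentially delicate step is the last one: passing from $\pi_F(Z)\cong\pi_F(X)$ in $\mathcal{O}_F$ to $Z\cong F^pX$ in $\mathcal{T}$. The cleanest way to handle it is to decompose into indecomposables (which is legitimate in the locally finite setting) and use the description of $\Hom_{\mathcal{O}_F}(\pi_F(X),\pi_F(Z))=\bigoplus_p\Hom_{\mathcal{T}}(X,F^pZ)$ together with Krull--Schmidt to lift the isomorphism. This is exactly the point where $F$-periodicity of $\mathcal{X}$ is essential: without it, the full preimage of $\pi_F(\mathcal{X})$ would be strictly larger than $\mathcal{X}$, so the two maps would not be mutually inverse. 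Once this is in place, combining with Lemmas \ref{ImCltilt} and \ref{PreimCltilt} yields the claimed bijection.
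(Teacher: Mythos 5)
Your proposal is correct and follows essentially the same route as the paper, whose proof consists of a single sentence citing Lemmas \ref{PreimCltilt} and \ref{ImCltilt}. The additional details you supply --- the $F$-periodicity of $\pi_F^{-1}(\mathcal Y)$ via $\pi_F(FX)=\pi_F(X)$, and the Krull--Schmidt argument showing $\pi_F^{-1}(\pi_F(\mathcal X))=\mathcal X$ for $F$-periodic $\mathcal X$ --- are exactly the routine verifications the paper leaves implicit, and you have identified correctly where $F$-periodicity is genuinely needed.
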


\begin{proof}
This follows directly from lemma \ref{PreimCltilt} and lemma \ref{ImCltilt}.
\end{proof} 

For the rest of the section we will focus on the case when \(\mathcal T=\mathcal D^b(H)\) where \(H\) is a representation-finite hereditary algebra. Note that \(\mathcal D^b(H)\) is locally finite for any hereditary algebra \(H\), therefore it follows immediatly that all subcategories are both covariantly finite and contravariantly finite. Let \(\mathbb S _m\) denote the endofunctor \(\tau\left[1-m\right]\) on \(\mathcal D ^b(H)\). It is known that any $m$-cluster tilting subcategory is $\mathbb S_m$ periodic:

\begin{lemma}\label{periodicity1}\cite{bergh}
Let \(\mathcal T\) be an \(m\)-cluster tilting subcategory of \(\mathcal D^b(H)\) for a representation-finite hereditary algebra \(H\) and \(m\geq1\). Then \(\mathcal T\) is \(\mathbb S_m\) periodic, that is \(\mathbb S_m \mathcal T=\mathcal T\).
\end{lemma}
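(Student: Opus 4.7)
The plan is to exploit the fact that $\mathcal D^b(H)$ has a Serre functor. By a theorem of Happel, for a finite-dimensional hereditary algebra $H$ the bounded derived category $\mathcal D^b(H)$ admits a Serre functor $\mathbb S$, given by $\mathbb S = \tau[1]$. In particular $\mathbb S_m = \tau[1-m] = \mathbb S \circ \Sigma^{-m}$, and there is a natural isomorphism
\[
\Hom_{\mathcal D^b(H)}(A,\mathbb S B) \cong D\Hom_{\mathcal D^b(H)}(B,A).
\]
This is the only non-trivial input I would need.

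To show $\mathbb S_m \mathcal T \subseteq \mathcal T$, I would pick any $X \in \mathcal T$ and check the Ext-vanishing condition from the definition of an $m$-cluster tilting subcategory. For any $Y \in \mathcal T$ and any $0 < i < m$, using that $\mathbb S$ commutes with $\Sigma$ and then applying Serre duality,
\[
\Ext^i_{\mathcal T}(Y,\mathbb S_m X) = \Hom(Y,\mathbb S \Sigma^{i-m} X) \cong D\Hom(\Sigma^{i-m} X, Y) = D\Ext^{m-i}(X,Y).
\]
Since $0 < m-i < m$ and both $X,Y$ lie in the $m$-cluster tilting subcategory $\mathcal T$, this vanishes. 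Thus $\Ext^i(\mathcal T,\mathbb S_m X)=0$ for $0<i<m$, and the Ext-characterization of $\mathcal T$ forces $\mathbb S_m X \in \mathcal T$.

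For the reverse inclusion I would run exactly the same calculation with $\mathbb S_m^{-1} = \tau^{-1}[m-1]$ in place of $\mathbb S_m$ (using the dual form of Serre duality, or equivalently the other symmetric Ext-characterization of $\mathcal T$), to get $\mathbb S_m^{-1}\mathcal T \subseteq \mathcal T$, hence $\mathcal T \subseteq \mathbb S_m \mathcal T$. Combining the two inclusions yields $\mathbb S_m \mathcal T = \mathcal T$. There is essentially no obstacle here: the whole argument is a single line of Serre-duality manipulation, and the only real content is recognising $\mathbb S_m$ as a shift of the Serre functor so that the Ext vanishing on $\mathcal T$ can be invoked at index $m-i$ instead of $i$.
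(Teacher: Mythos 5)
Your proposal is correct and follows essentially the same route as the paper: both arguments recognise $\mathbb S_m$ as $\mathbb S\Sigma^{-m}$, apply Serre duality once to convert $\Ext^i(\mathcal T,\mathbb S_m X)$ into $D\Ext^{m-i}(X,\mathcal T)$, which vanishes for $0<m-i<m$, and then invoke the Ext-characterization of an $m$-cluster tilting subcategory, handling the reverse inclusion symmetrically with $\mathbb S_m^{-1}$. The only cosmetic difference is the direction in which the duality is read off; there is no substantive divergence.
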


\begin{proof}
Let \(T_1\in\mathcal T\). Then for any \(T_2\in\mathcal T\) and 
for \(m>1\) we have that \(\Hom_{\mathcal D^b}(T_1,T_2\left[m-i\right] )=0\) for \(0<i<m\). Using Serre duality we then obtain:
\begin{align*}
0=\Hom_{\mathcal D^b}(T_1,T_2\left[m-i\right]) &\cong  D \Hom_{\mathcal D^b}(T_2\left[m-i\right],\mathbb S T_1) \\ & \cong D \Hom_{\mathcal D^b}(T_2,(\mathbb S T_1\left[-m\right])\left[i\right])\\ &\cong D \Hom_{\mathcal D^b}(T_2,(\mathbb S_m T_1)\left[i\right])
\end{align*}
Hence the object \(\mathbb S T_1\left[m\right]=\mathbb S_m T_1\) is in \(\add \mathcal T\). Similarly one can show that the object \(\mathbb S ^{-1}T_1\left[m\right]=\mathbb S_m^{-1}T_1\) also belongs to \(\mathcal T\).
\end{proof}

This implies that we have  a \(1-1\)-correspondence between \(m\)-cluster tilting subcategories in \(\mathcal D^b(H)\) and \(m\)-cluster tilting subcategories of the \(m\)-cluster category:

\begin{corollary}\label{correspondenceDbOgCm}
Let \(H\) be a representation-finite hereditary algebra. Then there is a 1-1 correspondence between \(m\)-cluster tilting subcategories of \(\mathcal D ^b(H)\) and \(m\)-cluster tilting subcategories of \(\mathcal C _m(H)=\mathcal D ^b(H)/\tau^-\left[m-1\right]\).
\end{corollary}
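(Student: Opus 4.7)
The plan is to realize this as a direct consequence of Theorem \ref{correspondence} combined with Lemma \ref{periodicity1}, so the bulk of the work is just identifying the right autoequivalence and verifying that the general hypotheses of Theorem \ref{correspondence} hold in the specific setting $\mathcal{T} = \mathcal{D}^b(H)$ for $H$ a representation-finite hereditary algebra.

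First I would take the autoequivalence $F := \tau^{-}[m-1]$ on $\mathcal{D}^b(H)$, so that by definition $\mathcal{C}_m(H) = \mathcal{D}^b(H)/F$. Observe that $F^{-1} = \tau[1-m] = \mathbb{S}_m$, so an $m$-cluster tilting subcategory $\mathcal{X} \subseteq \mathcal{D}^b(H)$ is $F$-periodic if and only if it is $\mathbb{S}_m$-periodic. Next I would verify the three hypotheses needed to invoke Theorem \ref{correspondence}: $\mathcal{D}^b(H)$ is locally finite (noted explicitly in the paragraph preceding Lemma \ref{periodicity1}), $\mathcal{C}_m(H)$ carries a triangulated structure, and $\pi_F : \mathcal{D}^b(H) \to \mathcal{C}_m(H)$ is a triangle functor — the last two facts are results of Keller recalled in Section \ref{background}.

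With these hypotheses satisfied, Theorem \ref{correspondence} yields a bijection between the $F$-periodic $m$-cluster tilting subcategories of $\mathcal{D}^b(H)$ and the $m$-cluster tilting subcategories of $\mathcal{C}_m(H)$. To finish, I would apply Lemma \ref{periodicity1}, which tells us that every $m$-cluster tilting subcategory of $\mathcal{D}^b(H)$ is automatically $\mathbb{S}_m$-periodic, hence $F$-periodic. Thus the $F$-periodicity condition is vacuous here and the bijection restricts to a bijection between \emph{all} $m$-cluster tilting subcategories of $\mathcal{D}^b(H)$ and all $m$-cluster tilting subcategories of $\mathcal{C}_m(H)$.

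In this sense there is no genuine obstacle: the real content is carried by the preceding results, and the only point that requires any care is matching the direction of the autoequivalence ($\tau^-[m-1]$ versus its inverse $\mathbb{S}_m$) so that "$F$-periodic" and "$\mathbb{S}_m$-periodic" refer to the same subcategories.
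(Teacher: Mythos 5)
Your proposal is correct and is exactly the argument the paper intends: the corollary is stated immediately after Lemma \ref{periodicity1} precisely because, with $F=\tau^-[m-1]$ and $F^{-1}=\mathbb{S}_m$, that lemma makes the $F$-periodicity hypothesis of Theorem \ref{correspondence} automatic. Your additional care in checking the hypotheses of Theorem \ref{correspondence} and in matching $F$-periodicity with $\mathbb{S}_m$-periodicity is exactly the right bookkeeping, and nothing is missing.
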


\begin{lemma}\label{periodInDerived}
Let \(\mathcal T\) be an \(n\)-cluster tilting subcategory of \(\mathcal C_m(H)\), and let \(\mathcal X\) be the preimage of \(\mathcal T\) in \(\mathcal D^b(H)\). If \(F_{\mathcal C}:\mathcal C_m(H)\rightarrow \mathcal C_m(H)\) is a functor of the form \(\tau^s\left[t\right]\), and \(\mathcal T\) is \(F_{\mathcal C}\)-periodic then \(\mathcal X\) is periodic under the functor \(F_{\mathcal D^b}:\mathcal D^b(H)\rightarrow \mathcal D^b(H)\) where \(F_{\mathcal D^b}=\tau^s\left[t\right]\) .
\end{lemma}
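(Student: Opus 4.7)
The plan is to leverage the commutativity between the projection functor $\pi_m$ and the functors $\tau$ and $[1]$. Since $\mathcal C_m(H)$ is defined as the orbit category $\mathcal D^b(H)/\tau^{-1}[m-1]$, both the Auslander--Reiten translate $\tau$ and the suspension $[1]$ descend along $\pi_m$: concretely, if $\pi_m(X) \cong \pi_m(Y)$ in $\mathcal C_m(H)$, then $Y$ lies in the $(\tau^{-1}[m-1])$-orbit of $X$, and because $\tau^{-1}[m-1]$ commutes with both $\tau$ and $[1]$, so do $\tau(Y)$ and $\tau(X)$ (and similarly for $[1]$). Hence the identification $F_{\mathcal C} = \tau^s[t]$ is exactly the functor induced from $F_{\mathcal D^b} = \tau^s[t]$, so that
\begin{equation*}
\pi_m \circ F_{\mathcal D^b} \;=\; F_{\mathcal C} \circ \pi_m.
\end{equation*}

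With this compatibility in hand, periodicity passes to the preimage by a direct set-theoretic argument. Let $X \in \mathcal X = \pi_m^{-1}(\mathcal T)$, so that $\pi_m(X) \in \mathcal T$. Using the commutation above and the hypothesis $F_{\mathcal C}(\mathcal T) = \mathcal T$, one gets
\begin{equation*}
\pi_m(F_{\mathcal D^b}(X)) \;=\; F_{\mathcal C}(\pi_m(X)) \;\in\; F_{\mathcal C}(\mathcal T) \;=\; \mathcal T,
\end{equation*}
so $F_{\mathcal D^b}(X) \in \pi_m^{-1}(\mathcal T) = \mathcal X$, giving $F_{\mathcal D^b}(\mathcal X) \subseteq \mathcal X$. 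Applying the same reasoning to the autoequivalence $F_{\mathcal C}^{-1} = \tau^{-s}[-t]$, which preserves $\mathcal T$ since $F_{\mathcal C}$ does, yields $F_{\mathcal D^b}^{-1}(\mathcal X) \subseteq \mathcal X$, i.e.\ the reverse inclusion $\mathcal X \subseteq F_{\mathcal D^b}(\mathcal X)$. Combining the two inclusions one concludes $F_{\mathcal D^b}(\mathcal X) = \mathcal X$.

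There is essentially no combinatorial content in this lemma; the one point requiring care is the justification that $\tau$ and $[1]$ genuinely descend to $\mathcal C_m(H)$ in a way strictly compatible with $\pi_m$, so that the equality $\pi_m \circ F_{\mathcal D^b} = F_{\mathcal C} \circ \pi_m$ holds on the nose (or at least up to natural isomorphism that preserves subcategories). This is the only step where anything could go wrong, but it is immediate from the construction of the orbit category and the fact that $\pi_m$ is a triangle functor. Once this is noted, the rest is a routine diagram-chase on preimages.
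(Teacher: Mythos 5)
Your proposal is correct and follows essentially the same route as the paper: establish that $\pi_m$ commutes with $\tau$ and $[1]$ (the paper cites that $\pi_m$ is a triangle functor and Proposition 1.3 of \cite{bmrrt}, while you argue it from the orbit-category construction), and then push an object $X\in\mathcal X$ through $\pi_m\circ F_{\mathcal D^b}=F_{\mathcal C}\circ\pi_m$ to land in $\mathcal T$. Your explicit treatment of the reverse inclusion via $F_{\mathcal C}^{-1}$ is a small point of extra care that the paper leaves implicit.
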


\begin{proof}
It is known from \cite{bmrrt} and \cite{keller} that the \(m\)-cluster category is triangulated and that \(\pi_m:\mathcal D^b(H)\rightarrow \mathcal C_m(H)\) is a triangle functor for \(m\geq 2\). Since \(\pi_m\) is a triangle functor it commutes with the suspension functors in \(\mathcal D^b(H)\) and \(\mathcal C_m(H)\) (both denoted \(\left[1\right]\) by abuse of notation).

From proposition 1.3 in \cite{bmrrt} we also have that \(\pi_m\) commutes with the AR-translate in both categories. 

Let \(X\) be any object in \(\mathcal X\). We need to show that \(F_{\mathcal D}X\in\mathcal X\). We therefore have that \(F_{\mathcal C}(\pi_m(X))=\pi_m(F_{\mathcal D}(X))\), and hence since \(F_{\mathcal C}(\pi_m(X))\in\mathcal T\) we have that \(F_{\mathcal D}(X)\in\mathcal X\). 
\end{proof}

We will now describe the method that will be applied in sections \ref{sectA}, \ref{typeD} and \ref{sectE}, where we will continue the focus on representation-finite hereditary algebras.  As we see from theorem \ref{correspondence}, whether or not an \(m\)-cluster tilting subcategory \(\mathcal X\) of  \(\mathcal D^b(H)\) can be pushed down to an \(m\)-cluster tilting subcategory in an orbit category \(\mathcal O_F(H)\), depends on if \(\mathcal X\) is periodic under the functor \(F\) in \(\mathcal D^b(H)\). Showing that a subcategory is \(F\)-periodic for a certain functor \(F\) will in the cases we will study require some counting, and is most easily carried out in a category with only finitely many isomorphism classes of indecomposable objects (i.e. in \(\mathcal C_m(H)\) rather than in in \(\mathcal D^b(H))\). However, by corollary \ref{correspondenceDbOgCm} and lemma \ref{periodInDerived}, for an \(m\)-cluster tilting subcategory of \(\mathcal D^b(H)\) with image \(\pi_m(\mathcal X)\) in the \(m\)-cluster category, we may study the periodicity of \(\pi_m(\mathcal X)\) under functors of the form \(\tau^s\left[t\right]\) in the \(m\)-cluster category to obtain information about the periodicity of \(\mathcal X\) under \(\tau^s\left[t\right]\) in \(\mathcal D^b(H)\). 

We will use this method to determine the period-properties under functors of the form \(\tau^s\left[t\right]\) for all \(2\)-cluster tilting subcategories of \(\mathcal D^b(H)\) for \(\Delta\) a Dynkin diagram. Note that in \(2\)-cluster categories \(\mathcal C_2(H)\), we have the isomorphism \(\tau^-\left[1\right]\cong id\) as functors, and hence \(\tau\cong\left[1\right]\). Hence we only need to check for periodicity under one of the functors \(\tau^s\) or \(\left[t\right]\). In sections \ref{sectA}, \ref{typeD} and \ref{sectE} we will study the periodicity of all \(2\)-cluster tilting subcategories of the \(2\)-cluster category of respectively type \(A,D\) and \(E\) under functors of the type \(\tau^s\). Finally we note that in these cases all \(2\)-cluster tilting subcategories are also \(2\)-cluster tilting objects, as the \(2\)-cluster category in these cases have only finitely many isomorphism classes of indecomposable objects.

\section{Quivers of cluster tilted Algebras}\label{quivers}
For each cluster tilting object $T$ in a cluster category $\mathcal{C}_2(kQ)$, with \(Q\) a Dynkin diagram, there is also a cluster tilted algebra defined as $\End_{\mathcal{C}}(T)^{op}$.  In subsequent sections we will investigate some properties of \(2\)-cluster tilting objects of type \(A\) and \(D\) based on the shape of the quiver of the corresponding cluster tilted algebra. The possible quivers occuring as quivers of cluster tilted algebras of type $A_n$ were classified by Vatne \cite{DagfinnA}. The first classification of quivers of cluster tilted algebras of type $D_n$ was given in \cite{DagfinnD} also by Vatne and contained four main types. In \cite{bow}, Oppermann, Bertani-{\O}kland and Wr{\aa}lsen were able to reduce this description to three main types of quivers. We will refer to the classification as it is presented in \cite{bow}.

\subsection{Cluster tilted algebras of type A} 

All cluster tilted algebras of type $A$ are a connected subquiver of the quiver $\mathcal{Q}$ below

\begin{minipage}{\linewidth}\label{Q_A}
\[ \begin{tikzpicture}[xscale=.78,yscale=.4,yscale=-1]
 \node (P1) at (13.6,2) [vertex] {};
 \node (P2) at (11.6,4) [vertex] {};
 \node (P3) at (15.6,4) [vertex] {};
 \node (P4) at (10.6,5.5) [vertex] {};
 \node (P5) at (12.6,5.5) [vertex] {};
 \node (P6) at (14.6,5.5) [vertex] {};
 \node (P7) at (16.6,5.5) [vertex] {};
 \node (P8) at (10.1,6.5) [vertex] {};
 \node (P9) at (11.1,6.5) [vertex] {};
 \node (P10) at (12.1,6.5) [vertex] {};
 \node (P11) at (13.1,6.5) [vertex] {};
 \node (P12) at (14.1,6.5) [vertex] {};
 \node (P13) at (15.1,6.5) [vertex] {};
 \node (P14) at (16.1,6.5) [vertex] {};
 \node (P15) at (17.1,6.5) [vertex] {};
 \draw [->] (P2) -- (P1);
 \draw [->] (P1) -- (P3);
 \draw [->] (P3) -- (P2);
 \draw [->] (P4) -- (P2);
 \draw [->] (P2) -- (P5);
 \draw [->] (P6) -- (P3);
 \draw [->] (P3) -- (P7);
 \draw [->] (P5) -- (P4);
 \draw [->] (P7) -- (P6);
 \draw [->] (P8) -- (P4);
 \draw [->] (P4) -- (P9);
 \draw [->] (P10) -- (P5);
 \draw [->] (P5) -- (P11);
 \draw [->] (P12) -- (P6);
 \draw [->] (P6) -- (P13);
 \draw [->] (P14) -- (P7);
 \draw [->] (P7) -- (P15);
 \draw [->] (P15) -- (P14);
 \draw [->] (P13) -- (P12);
 \draw [->] (P11) -- (P10);
 \draw [->] (P9) -- (P8);
 \draw [dotted] (P8) -- +(-.3,.7);
 \draw [dotted] (P9) -- +(.3,.7);
 \draw [dotted] (P10) -- +(-.3,.7);
 \draw [dotted] (P11) -- +(.3,.7);
 \draw [dotted] (P12) -- +(-.3,.7);
 \draw [dotted] (P13) -- +(.3,.7);
 \draw [dotted] (P14) -- +(-.3,.7);
 \draw [dotted] (P15) -- +(.3,.7);
\end{tikzpicture} \]
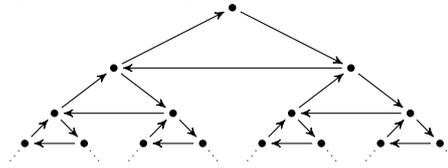
\captionof{figure}{The quiver \(\mathcal Q\)} \label{figure.an}
\end{minipage}

For type $A_n$, the cluster tilted algebras can be described as follows:
\begin{itemize}
\item all non-trivial cycles are oriented and have length 3.
\item any vertex has at most four neighbours
\item if a vertex has four neighbours then two of the arrows adjacent to the vertex are part of a $3$-cycle, and the other two arrows also adjacent to the vertex are part of another $3$-cycle.
\item if a vertex has three neighbours then two of the adjacent arrows belong to a $3$-cycle and the last arrow adjacent to the vertex does not belong to any $3$-cycle.
\end{itemize}

The next result is not explicitly stated in \cite{bow}, but follows directly from results in \cite{bow}. It shows that the position of a vertex in the quiver of a cluster tilted algebra is closely related to which \(\tau\)-orbit the corresponding indecomposable summand lies in, in the AR-quiver of the cluster catgory. This will be very useful in section \ref{sectA}.  

\begin{theorem}\label{bowPerp}
Let \(Q\) be a quiver of a \(2\)-cluster tilting object \(T\) of \(\mathcal C_2(A_n)\). Let \(i\) be a vertex of \(Q\), such that deleting \(i\) from \(Q\) gives rise to two connected subquivers \(Q_1\) and \(Q_2\), with \(|Q_1|\leq|Q_2|\). Then the indecomposable summand \(T_i\) of \(T\) corresponding to vertex \(i\) lie in \(\tau\)-orbit nr \(|Q_1|+1\), counted from the outermost \(\tau\)-orbit of \(\mathcal C_2(A_n)\).
\end{theorem}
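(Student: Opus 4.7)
The plan is to work in the standard polygon/triangulation model of \(\mathcal C_2(A_n)\) and reduce the statement to a combinatorial computation about diagonals. First I would set up the dictionary used (implicitly) in \cite{bow}: indecomposable objects of \(\mathcal C_2(A_n)\) correspond to diagonals of a regular \((n+3)\)-gon \(P\), 2-cluster tilting objects correspond to triangulations of \(P\), and for a triangulation \(T=T_1\oplus\cdots\oplus T_n\) the quiver \(Q=Q_T\) of \(\End_{\mathcal C_2}(T)^{op}\) has vertices the diagonals \(T_i\) and oriented 3-cycles on the internal triangles of the triangulation, as described just above the statement. In particular an arrow of \(Q\) between two diagonals exists only when those diagonals bound a common triangle. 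The AR-translate \(\tau\) of \(\mathcal C_2(A_n)\) is realized by a single-vertex rotation of \(P\), so two diagonals lie in the same \(\tau\)-orbit if and only if they cut the remaining \(n+1\) vertices of \(P\) into the same unordered split \(\{a,b\}\) with \(a+b=n+1\) and \(a\leq b\).

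Next I would identify the ordering of \(\tau\)-orbits "from the outermost inward" on the AR-quiver. Using the well-known realization of the AR-quiver of \(\mathcal C_2(A_n)\) as the Möbius-type quotient of \(\mathbb Z A_n\) by \(\tau^{-}[1]\), the boundary orbit is exactly the orbit of the shortest diagonals, namely those with arc-split \(\{1,n\}\); stepping one row inward increases the smaller arc by one. Hence the \(k\)-th \(\tau\)-orbit from the outermost is precisely the set of diagonals whose arc-split is \(\{k,n+1-k\}\).

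The key combinatorial step is to translate vertex-deletion in \(Q\) into diagonal-removal in the triangulation. The diagonal \(T_i\) divides \(P\) into two sub-polygons \(P_1,P_2\) with \(a+2\) and \(b+2\) vertices, where \(\{a,b\}\) is the arc-split of \(T_i\) and \(a\leq b\). Every other diagonal of the triangulation lies entirely on one side of \(T_i\), so the rest of the triangulation restricts to a triangulation of \(P_1\) with \(a-1\) diagonals and one of \(P_2\) with \(b-1\) diagonals. Moreover no triangle of the triangulation has vertices on both sides of \(T_i\), so by the description of \(Q\) recalled above no arrow of \(Q\) can connect a diagonal of \(P_1\) to one of \(P_2\); hence after deleting vertex \(i\) the two induced subquivers are exactly the quivers of these sub-triangulations, and \(|Q_1|=a-1\), \(|Q_2|=b-1\). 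Combined with the orbit identification from the previous step, \(T_i\) lies in the \(a\)-th \(\tau\)-orbit from the outside, and \(a=|Q_1|+1\) is precisely the claim.

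The main obstacle I expect is making the labelling "from the outermost \(\tau\)-orbit" rigorous: one must verify that the boundary row of the AR-quiver of \(\mathcal C_2(A_n)\) really corresponds to the diagonals with arc-split \(\{1,n\}\), and that successive rows correspond to successively larger values of the smaller arc. Once this identification is pinned down the rest of the argument is essentially bookkeeping, since both the deletion of vertex \(i\) in \(Q\) and the \(\tau\)-orbit of \(T_i\) are controlled by the same invariant, namely the number of polygon vertices cut off by \(T_i\) on its shorter side.
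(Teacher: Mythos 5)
Your proposal is correct, but it takes a genuinely different route from the paper. The paper's proof is a two-line deduction from an external result: it invokes Proposition 2.2 of \cite{bow}, which states directly that if \(T_i\) lies in the \(j\)-th outermost \(\tau\)-orbit then deleting vertex \(i\) from \(Q\) yields connected subquivers of sizes \(n-j\) and \(j-1\), from which the claim is immediate. You instead rebuild the statement from scratch in the polygon model: diagonals of the \((n+3)\)-gon, triangulations, the arc-split invariant \(\{a,b\}\) with \(a+b=n+1\), and the observation that both the \(\tau\)-orbit of \(T_i\) (since \(\tau\) is rotation, orbits are exactly the sets of diagonals with a fixed split) and the sizes \(|Q_1|=a-1\), \(|Q_2|=b-1\) of the two components of \(Q\setminus\{i\}\) (since arrows only join diagonals bounding a common triangle, nothing crosses \(T_i\)) are governed by the same invariant \(a\). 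Your bookkeeping checks out: the two sub-polygons have \(a+2\) and \(b+2\) vertices, their triangulations have \(a-1\) and \(b-1\) diagonals, each induced subquiver is the (connected) quiver of a cluster-tilted algebra of type \(A\), and the orbit count \(\lfloor (n+1)/2\rfloor\) with the exceptional short orbit of diameters for odd \(n\) matches the orbit structure the paper records at the start of Section 5. What your approach buys is self-containedness and an explicit mechanism explaining \emph{why} the vertex-deletion statistic and the \(\tau\)-orbit coincide; what it costs is that you must still pin down, as you acknowledge, the identification of the outermost row of the AR-quiver with the split \(\{1,n\}\) and the monotone correspondence between rows and the smaller arc --- a standard fact from the Caldero--Chapoton--Schiffler model, but one that in your write-up is asserted rather than proved. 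Modulo making that labelling rigorous, the argument is complete and sound.
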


\begin{proof}
Suppose that \(T_i\) lies in the \(j\) outermost \(\tau\)-orbit. Then by proposition 2.2 \cite{bow} this means that deleting the vertex \(i\) from \(Q\) one gives rise to connected subquivers \(Q_1^j\) and \(Q_2^j\) of size \(n-j\) and \(j-1\). Hence we must have \(j=i\).  
\end{proof}

\begin{theorem}(lemma 3.2 in \cite{bow})
Any \(2\)-cluster tilting object \(T\) in \(\mathcal C_2(A_m)\) is induced by a tilting module over \(kA_m\) where \(A_m\) has linear orientation. 
\end{theorem}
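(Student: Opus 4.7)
The approach is to invoke the Caldero--Chapoton--Schiffler combinatorial model of \(\mathcal C_2(A_m)\) from \cite{ccs}: indecomposable objects correspond to diagonals of a regular \((m+3)\)-gon, \(2\)-cluster tilting objects correspond to triangulations of that polygon, and the AR-translate \(\tau\) corresponds to cyclic rotation. The first step is to identify the diagonals that correspond to indecomposable modules over the linearly oriented \(kA_m\): these form a region of \(\tfrac{m(m+1)}{2}\) diagonals, the complementary \(m\) diagonals corresponding to the shifted projectives \(P_i[1]\) in the standard fundamental domain of \(\pi_m:\mathcal D^b(kA_m)\to\mathcal C_2(A_m)\). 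Under the CCS identification these complementary diagonals are precisely those incident to one distinguished vertex \(v_0\) of the polygon.

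Given a \(2\)-cluster tilting object \(T\) with associated triangulation \(\mathcal T\), I would seek an integer \(k\) so that \(\tau^k T\) lies entirely in the module region. Combinatorially this reduces to exhibiting a rotation of \(\mathcal T\) in which no diagonal is incident to \(v_0\). For this I would invoke the classical fact that every triangulation of a convex polygon with at least four vertices has at least two ``ears'' — vertices incident to no diagonal of the triangulation. Picking such an ear \(v\) of \(\mathcal T\) and rotating the polygon so that \(v\) is identified with \(v_0\) produces the required triangulation; the rotation corresponds to applying \(\tau^k\) in \(\mathcal C_2(A_m)\).

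Each summand of \(\tau^k T\) then lifts canonically to an indecomposable \(kA_m\)-module \(M_i\), and I would show that \(M := \bigoplus M_i\) is a tilting module. For modules \(X,Y\) over the hereditary algebra \(kA_m\) one has \(\Ext^1_{\mathcal C_2}(X,Y) \cong \Ext^1_{kA_m}(X,Y) \oplus D\Hom(Y, \tau X)\), so Ext-rigidity of \(\tau^k T\) in \(\mathcal C_2\) forces \(\Ext^1_{kA_m}(M_i, M_j) = 0\) for all \(i,j\); since \(M\) has \(m\) non-isomorphic indecomposable summands, it is a tilting module. By construction \(\pi_m(M) = \tau^k T\), and since \(\tau\) is an autoequivalence of \(\mathcal C_2(A_m)\) commuting with \(\pi_m\), this realizes \(T\) as induced by the tilting module \(M\).

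The main obstacle is the explicit polygon/AR-quiver identification for the linear orientation, in particular confirming that the \(m\) shifted projectives \(P_i[1]\) really do form a ``star'' at a single vertex \(v_0\) of the polygon. This identification of the module region with the complement of a star in the CCS model is standard but must be verified directly before the ear argument can be applied. Once this identification is established, the combinatorial rotation and the Ext-calculation above combine to finish the proof.
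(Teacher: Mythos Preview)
The paper does not actually prove this statement; it is quoted verbatim as lemma~3.2 of \cite{bow} and used as a black box. So there is no ``paper's own proof'' to compare against.

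Your approach via the CCS polygon model and the ear lemma for triangulations is a correct and self-contained argument, and is in fact close in spirit to how the result is usually proved. One small correction: in your Ext computation the second summand should be $\Hom_{kA_m}(X,\tau Y)\cong D\Ext^1_{kA_m}(Y,X)$, not $D\Hom(Y,\tau X)$; by Auslander--Reiten duality the expression you wrote is isomorphic to $\Ext^1_{kA_m}(X,Y)$ again, so your formula as stated would double-count. This does not affect the conclusion, since all you actually need is that $\Ext^1_{kA_m}(M_i,M_j)$ embeds into $\Ext^1_{\mathcal C_2}(M_i,M_j)$, and that already follows from the faithfulness of $\pi_m$. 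The identification of the shifted projectives with the fan of diagonals at a single polygon vertex is standard in the CCS model, so the obstacle you flag is easily overcome.
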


The next result is important as it gives us an easy method of determining the summands of all cluster tilting objects of \(\mathcal C_2(A_n)\) having a certain quiver \(Q\) as the quiver of the cluster tilted algebra, given that we know the summands for one such cluster tilting object.

\begin{theorem}\cite{HermundA}\label{HermundA}
Let $T$ and $T'$ be two cluster tilting objects of type $A_n$. The cluster tilted algebras $\End_{\mathcal{C}}(T)^{op}$ and $\End_{\mathcal{C}}(T')^{op}$ are isomorphic if and only if $T$=$\tau^i T'$ for some integer $i$.
\end{theorem}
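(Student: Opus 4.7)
The forward implication is immediate from the fact that $\tau$ is an autoequivalence of $\mathcal C_2(A_n)$: if $T = \tau^i T'$ then $\End_{\mathcal C}(T)^{op} \cong \End_{\mathcal C}(\tau^i T')^{op} = \End_{\mathcal C}(T')^{op}$, the isomorphism being induced by applying $\tau^{-i}$ to morphisms. For the reverse direction, my plan is to use Theorem \ref{bowPerp} to match summands of $T$ and $T'$ up to a (possibly per-summand) $\tau$-shift, and then to use the combinatorial geometry of cluster tilting objects in $\mathcal C_2(A_n)$ to force this shift to be global.

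Assume $\End_{\mathcal C}(T)^{op} \cong \End_{\mathcal C}(T')^{op}$ and denote the common Gabriel quiver by $Q$. The isomorphism of basic algebras identifies the indecomposable summands of $T$ and $T'$; I label them $\{T_k\}_{k\in Q_0}$ and $\{T'_k\}_{k\in Q_0}$ so that $T_k$ and $T'_k$ both correspond to the vertex $k$ of $Q$. Theorem \ref{bowPerp} reads off the $\tau$-orbit of any summand purely from the combinatorics of $Q$ at the corresponding vertex, namely from the sizes of the two connected components of $Q\setminus\{k\}$. Since $T$ and $T'$ share the quiver $Q$, the summands $T_k$ and $T'_k$ lie in the same $\tau$-orbit of $\mathcal C_2(A_n)$, so there exist integers $a_k$ with $T_k \cong \tau^{a_k} T'_k$. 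It remains to prove $a_k = a_\ell$ for all $k,\ell$.

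For this I would invoke the Caldero-Chapoton-Schiffler polygon model of $\mathcal C_2(A_n)$ from \cite{ccs}: indecomposables correspond to diagonals of a regular $(n+3)$-gon $P$, cluster tilting objects to triangulations of $P$, the autoequivalence $\tau$ to rotation of $P$ by one vertex, and the quiver of the cluster tilted algebra of a triangulation is precisely its oriented dual quiver. Under this dictionary, the hypothesis $\End_{\mathcal C}(T)^{op} \cong \End_{\mathcal C}(T')^{op}$ translates into saying that two triangulations of $P$ have isomorphic oriented dual quivers, and the conclusion $T = \tau^i T'$ translates into saying they differ by a rotation of $P$. The main obstacle is then the purely combinatorial statement that two triangulations of a convex polygon whose oriented dual quivers are isomorphic must be related by a rotation. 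I would prove this by induction on $n$: every triangulation contains an ear (a triangle with two polygon edges), an isomorphism of oriented dual quivers must match ears to ears, and rotating $P$ to align a chosen pair of ears reduces the problem to the triangulation obtained by deleting those ears inside a polygon with one fewer vertex, where induction applies.
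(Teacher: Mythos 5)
First, for context: the paper does not prove this statement at all — it is imported verbatim from \cite{HermundA} — so there is no internal proof to compare against. Your forward implication is correct and complete, and your reduction of the converse to the polygon model (diagonals of a regular $(n+3)$-gon, triangulations, $\tau$ as rotation by one vertex, the cluster-tilted quiver as the oriented dual quiver of the triangulation) is the right move and is essentially how the cited reference proceeds. Your intermediate step via Theorem \ref{bowPerp} is harmless but redundant, since the polygon argument, if completed, proves the global statement directly; note also that Theorem \ref{bowPerp} as stated only applies to vertices whose deletion splits $Q$ into two connected subquivers, so it does not literally cover leaves or vertices sitting inside a $3$-cycle whose removal leaves the quiver connected.

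The genuine gap is in the proposed induction for the combinatorial lemma, in two places. (i) The induction does not close as stated: after rotating $P$ so that a chosen ear of $T'$ coincides with the matched ear of $T$ and deleting both ears, the inductive hypothesis only yields that the two peeled triangulations of the $(n+2)$-gon are related by \emph{some} rotation of the smaller polygon, not that they are equal, so you cannot conclude $T'=\rho(T)$. You need the stronger inductive statement that every isomorphism of oriented dual quivers is \emph{induced} by a rotation, and you must then check that the rotation inducing the restricted isomorphism is compatible with the ear you already aligned. (ii) Nothing in the sketched mechanism (match ears, align by rotation, delete, recurse) actually uses the orientation of the dual quiver, yet the statement is false at the level of unoriented dual graphs: in the heptagon with vertices $0,\dots,6$, the triangulation with diagonals $\{0,2\},\{0,3\},\{3,5\},\{0,5\}$ and its mirror image $\{0,2\},\{0,4\},\{2,4\},\{0,5\}$ both have unoriented dual graph equal to a triangle with one pendant edge, but they are not related by any rotation; they are distinguished only by whether the pendant arrow points into or out of its attachment vertex on the oriented $3$-cycle. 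So a correct argument must carry the orientation data through every step, and the ``align an ear by rotation'' step is precisely where a reflection can slip in if it does not.
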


\subsection{Cluster tilted algebras of type D}\label{ClTiltAlgTypeD}
We will base the summary of type \(D\) on the description given in \cite{bow}. Given a quiver $D_n$ then the quivers of the  cluster tilted algebras of $D_n$ are of the following forms:
\[ \begin{tikzpicture}[scale=.5,yscale=-1]
 \pgftransformshift{\pgfpoint{-2cm}{0}};
 \node at (1,.5) {1)};
 \node (P1) at (2,1) [inner sep=1pt] {$\star$};
 \node (P2) at (3,0) [vertex] {};
 \node (P3) at (3,2) [vertex] {};
 \node (P4) at (4,1) [inner sep=1pt] {$\star$};
 \draw [->] (P1) -- (P2);
 \draw [->] (P1) -- (P3);
 \draw [->] (P2) -- (P4);
 \draw [->] (P3) -- (P4);
 \draw [->] (P4) -- (P1);
 
 \pgftransformshift{\pgfpoint{6cm}{-5cm}};
 \node at (1,5.5) {2)};
 \node (P1) at (2,6) [inner sep=1pt] {$\star$};
 \node (P2) at (3,5) [vertex] {};
 \node (P3) at (3,7) [vertex] {};
 \node (P4) at (4,6) [inner sep=1pt] {$\star$};
 \draw [->] (P1) -- (P2);
 \draw [->] (P3) -- (P1);
 \draw [->] (P2) -- (P4);
 \draw [->] (P4) -- (P3);
 \pgftransformshift{\pgfpoint{6cm}{-3cm}};
 \node at (1,8.5) {3)};
 \node (P1) at (2,9.4) [vertex] {};
 \node (P2) at (3.2,8.7) [vertex] {};
 \node (P3) at (4.4,9.4) [vertex] {};
 \node (P4) at (4.4,10.8) [vertex] {};
 \node (P5) at (3.2,11.5) [vertex] {};
 \node (P6) at (2,8) [inner sep=1pt] {$\star$};
 \node (P7) at (4.4,8) [inner sep=1pt] {$\star$};
 \node (P8) at (5.6,10.1) [inner sep=1pt] {$\star$};
 \node (P9) at (4.4,12.2) [inner sep=1pt] {$\star$};
 \draw [->] (P2) -- (P1);
 \draw [->] (P3) -- (P2);
 \draw [->] (P4) -- (P3);
 \draw [->] (P5) -- (P4);
 \draw [->] (P1) -- (P6);
 \draw [->] (P6) -- (P2);
 \draw [->] (P2) -- (P7);
 \draw [->] (P7) -- (P3);
 \draw [->] (P3) -- (P8);
 \draw [->] (P8) -- (P4);
 \draw [->] (P4) -- (P9);
 \draw [->] (P9) -- (P5);
 \draw [thick, loosely dotted] (P1) .. controls (1.4,10.45) and (2,11.5) .. (P5);
\end{tikzpicture} .\]
Each star represents what is called a connecting vertex. At each connecting vertex there is attached a quiver from the mutation class of Dynkin type \(A_l\), note that the case \(l=0\) can occur.

We now go on to distinguish between two different types of indecomposable objects in \(\mathcal C_2(D_n)\). This will enable a more accurate description of the distribution of the indecomposable summands of each of the the three types of quivers of cluster tilted algebras. One should take note of the Ar-quiver of \(\mathcal D^b(D_n)\), as shown in figure \ref{AR-quiverD}.

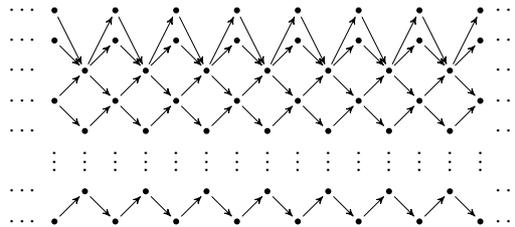
\begin{figure}[h]
\[ \scalebox{.8}{ \begin{tikzpicture}[scale=.5,yscale=-1]
 \foreach \x in {0,...,7}
  \foreach \y in {1,2,4,8}
   \node (\y-\x) at (\x*2,\y) [vertex] {};
 \foreach \x in {0,...,6}
  \foreach \y in {3,5,7}
   \node (\y-\x) at (\x*2+1,\y) [vertex] {};
 \foreach \xa/\xb in {0/1,1/2,2/3,3/4,4/5,5/6,6/7}
  \foreach \ya/\yb in {1/3,2/3,4/3,4/5,8/7}
   {
    \draw [->] (\ya-\xa) -- (\yb-\xa);
    \draw [->] (\yb-\xa) -- (\ya-\xb);
   }
 \foreach \x in {0,...,14}
  \node at (\x,5.8) {$\vdots$};
 \foreach \y in {1,...,5,7,8}
  {
   \node at (-1,\y) {$\cdots$};
   \node at (15,\y) {$\cdots$};
  }
\end{tikzpicture} } \]\caption{The AR-quiver of \(\mathcal D^b(D_n)\).}\label{AR-quiverD}
\end{figure}

\begin{definition}
The objects lying in the two top \(\tau\)-orbits of the AR-quiver of \(\mathcal D^b(D_n)\) as illustrated above, are called \(\alpha\)-objects. Objects in other \(\tau\)-orbits are called \(\beta\)-objects.  Also we define the flip \(\phi T_i\) of an \(\alpha\)-object \(T_i\) to be the other \(\alpha\)-object that is a summand of the middle-term in the same AR-triangle as \(T_i\). 
\end{definition} 

The images of \(\alpha\) objects, respectively \(\beta\) objects, in \(\mathcal C_2(D_n)\) are also called \(\alpha\) objects, respectively \(\beta\) objects. 

From \cite{bow} we have the following theorem, giving us information about the distribution of \(\alpha\)- objects for each type of quiver of type \(D\).

\begin{theorem}(theorem 4.1 \cite{bow})\label{alphaDistribution}
Let \(U\) be a \(2\)-cluster tilting object of type \(D_n\). Let \(Q\) be the quiver of the corresponding cluster tilted algebra. Then
\begin{itemize}
\item If \(Q\) is of type 1 listed above, then \(U\) has exactly two \(\alpha\)-objects \(U_{\alpha}\) and \(\phi _{\alpha}(U_{\alpha})\) as summands. 
\item If \(Q\) is of type 2 listed above, then \(U\) has exactly two \(\alpha\)-objects \(U_1\) and \(U_2\) as summands, and \(U_2\neq\phi U_1\). Furthermore if the size of the subquivers at the connecting vertices are \(n_1\) and \(n_2\) then \(U_1\) and \(U_2\) are such that \(U_1=\tau^{n_1+1}U_2\) or \(U_1=\phi\tau^{n_1+1}U_2\) and \(U_2=\tau^{n_2+1}U_1\) or \(U_2=\phi\tau^{n_2+1}U_1\)
\item If \(Q\) is of type 3 then then \(U\) has more than two \(\alpha\)-objects as summands. The \(\alpha\)-objects are distributed within the AR-quiver depending on the size of the subquivers of type \(A\) at the various connecting vertices, as described for type 2. 
\end{itemize}
\end{theorem}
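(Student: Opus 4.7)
The strategy is a case-by-case analysis of the three quiver types, using reduction to a canonical representative for each type and explicit inspection of the AR-quiver of \(\mathcal{C}_2(D_n)\).

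First, I would establish, in analogy with Theorem \ref{HermundA}, that any two \(2\)-cluster tilting objects of \(\mathcal{C}_2(D_n)\) whose cluster tilted algebras have isomorphic quivers differ by an autoequivalence of the form \(\tau^i\) or \(\tau^i \phi\). This reduction step is crucial because both \(\tau\) and the flip \(\phi\) preserve the partition of indecomposables into \(\alpha\)-objects and \(\beta\)-objects. Hence it suffices, for each of the three quiver types, to exhibit a single canonical \(2\)-cluster tilting object realizing that quiver and to count its \(\alpha\)-summands explicitly.

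For Type 1, the skeleton has two \(3\)-cycles sharing the edge between the two non-star vertices. I would construct a canonical representative by starting from a tilting module for a suitable orientation of \(D_n\) and performing mutations at interior vertices. The shared edge forces the two summands at the non-star vertices to come from the same AR-triangle, which by the definition of the flip can only happen if both lie in the top two \(\tau\)-orbits, i.e., form a flip pair of \(\alpha\)-objects. The remaining summands sit inside the connecting type-\(A\) subquivers attached at the stars, and by the classification of type-\(A\) cluster tilted algebras these correspond to indecomposables in strictly lower \(\tau\)-orbits, hence to \(\beta\)-objects.

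For Type 2, the same two non-star vertices exist but the absence of the diagonal arrow rules out a flip pair; otherwise the additional short Ext-path would force the diagonal arrow and produce a Type 1 quiver. The precise \(\tau\)-shift relating the two \(\alpha\)-summands is determined by the sizes \(n_1, n_2\) of the attached \(A\)-subquivers: each subquiver of size \(n_i\) together with its connecting vertex spans \(n_i + 1\) consecutive \(\tau\)-orbits, giving the formulas \(U_1 = \tau^{n_1+1} U_2\) or \(U_1 = \phi \tau^{n_1+1} U_2\), and symmetrically in the other direction. For Type 3, I would apply the same local analysis at each of the four connecting vertices of the central \(5\)-cycle, concluding that each star forces an adjacent \(\alpha\)-summand.

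The main obstacle is the Type 3 case, where the multiple connecting vertices of the central cycle yield a coupled system of constraints on the distribution of \(\alpha\)-objects. I would handle this by induction on \(n\), mutating a vertex inside one of the attached type-\(A\) subquivers to reduce to either a smaller Type 3 configuration or, in the degenerate case of an empty attached subquiver, to a Type 1 or Type 2 configuration where the base of the induction is already established. Care must be taken to verify that mutation at such a vertex produces a cluster tilting object whose quiver is again of one of the three types, but this is guaranteed by the mutation-closure of the list in \cite{bow}.
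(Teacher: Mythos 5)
The paper does not prove this statement at all: it is quoted verbatim as Theorem 4.1 of \cite{bow} and used as imported input, so there is no internal proof to compare against. Judged on its own merits, your sketch has the right opening move (reduce via the type-$D$ analogue of Theorem \ref{HermundA}, i.e.\ Theorem \ref{hermundD}, noting that $\tau$ and $\phi$ preserve the $\alpha$/$\beta$ partition), but the case analysis that is supposed to carry the actual content has several genuine gaps. First, each ``type'' is an infinite family of quivers parametrized by the attached type-$A$ subquivers, so the reduction only licenses checking one representative \emph{per isomorphism class of quiver}, not one per type; the uniform construction and verification over the whole family is precisely the substance of the theorem, and your description of it is too vague to count as a proof. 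Second, your reading of the Type 1 quiver is wrong: the two $3$-cycles share the arrow between the two \emph{connecting} ($\star$) vertices, not an edge between the two non-star vertices, and the inference ``shared edge $\Rightarrow$ the two summands are middle terms of the same AR-triangle'' is unjustified in any case --- an arrow in the Gabriel quiver of $\End_{\mathcal C}(U)^{op}$ records an irreducible map in $\add U$, not an AR-sequence in $\mathcal C_2(D_n)$, so identifying a flip pair requires an actual computation with $\Hom$- and $\Ext$-supports that you do not supply. Third, for Type 3 you assume a central $5$-cycle with four connecting vertices, whereas the central cycle has arbitrary length (equal to the number of $\alpha$-objects, which is exactly what needs to be determined).

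The most serious problem is the proposed induction for Type 3. Mutation of a cluster tilting object preserves the number of indecomposable summands and stays inside $\mathcal C_2(D_n)$ for the \emph{same} $n$, so ``mutating a vertex inside one of the attached type-$A$ subquivers'' cannot reduce to a smaller Type 3 configuration or change $n$; and simply deleting a vertex does not yield a cluster tilting object of some $\mathcal C_2(D_m)$ with $m<n$. As written, the inductive step has no valid mechanism. Finally, the quantitative claim $U_1=\tau^{n_1+1}U_2$ or $\phi\tau^{n_1+1}U_2$ is justified only by the phrase that a subquiver of size $n_i$ ``spans $n_i+1$ consecutive $\tau$-orbits,'' which conflates $\tau$-orbits (rows of the AR-quiver) with $\tau$-shifts along an orbit (columns); the correct argument has to count how many columns the $A$-triangle determined by the two $\alpha$-summands occupies, in the spirit of the counting in Lemma \ref{pairwiseVertically}, and the parity of $n_i+1$ is what decides whether the factor $\phi$ appears. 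None of these points is fatal to the overall strategy, but each would have to be repaired before the sketch becomes a proof.
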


Consider a \(2\)-cluster tilting object \(T\) of \(\mathcal C_2(D_n)\), with quiver \(Q\). Then it follows from the proof in \cite{bow} that if \(Q_l\) is a quiver in the mutation class of \(A_l\) that is attached at a connecting vertex of \(Q\), then the indecomposable summands of \(T\) corresponding to the vertices in \(Q_l\) lie in a subcategory of \(\mathcal C_2(D_n)\) that is equivalent to the category \(\modf kA_l\) where \(A_l\) has linear orientation. Subcategories of \(\mathcal C_2(D_n)\) that are equivalent to \(\modf kA_n\) for some \(n\) with linear orientation on \(A_n\) will be called an \(A\)-triangle of size \(n\). We will encounter such subcategories again in sections \ref{sectA} and \ref{typeD}. Furthermore, if \(\Delta\) is an \(A\)-triangle of size \(l\) in \(\mathcal C_2(D_n)\), then we will denote the indecomposable object in \(\mathcal C_2(D_n)\) corresponding to the projective-injective object of \(\modf kA_l\) by \(\Pi(\Delta)\).

\begin{lemma}\label{MorpAtriangle}\cite{bow}
Let \(T\) be \(2\)-cluster tilting object in \(\mathcal C_2(D_n)\), and let \(\Delta\) be an \(A\)-triangle in \(\mathcal C_2(D_n)\). Then all morphism between an indecomposable object in \(\Delta\) and an indecomposable summand of \(T\) not in \(\Delta\) must factor through the projective-injective object \(\Pi(\Delta)\) of \(\Delta\). 
\end{lemma}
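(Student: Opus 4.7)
The plan is to identify $\Pi(\Delta)$ as the apex of the triangular region in the AR-quiver of $\mathcal C_2(D_n)$ which represents $\Delta\simeq\modf kA_l$ (with linear orientation), and then to argue via Hom-hammocks that this apex is the unique point at which morphisms enter or leave $\Delta$. The equivalence $\Delta\simeq\modf kA_l$ immediately locates $\Pi(\Delta)$: it corresponds to the unique projective-injective $kA_l$-module, and so it sits at the apex of the triangular region which $\Delta$ occupies inside the AR-quiver, with the two sides of the triangle, meeting at $\Pi(\Delta)$, being respectively the $\Delta$-projectives and the $\Delta$-injectives. This picture follows from the construction of the A-triangle by attachment of an $A_l$-mutation-class quiver at a connecting vertex, as recalled in subsection \ref{ClTiltAlgTypeD}.

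Next I would use the knitting/hammock description of Hom-spaces in $\mathcal C_2(D_n)$: any nonzero morphism between indecomposables is a sum of compositions of irreducible morphisms along directed paths in the AR-quiver. For $X$ an indecomposable of $\Delta$ and $T_i$ a summand of $T$ outside $\Delta$, every such path from $X$ to $T_i$ (or from $T_i$ to $X$) must exit the triangular region; by the geometry of the triangle the only indecomposable of $\Delta$ adjacent in the AR-quiver of $\mathcal C_2(D_n)$ to indecomposables outside $\Delta$ is $\Pi(\Delta)$, so each such path passes through $\Pi(\Delta)$. This yields the desired factorization $X\to\Pi(\Delta)\to T_i$. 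The $\Ext^1$-orthogonality among summands of $T$, together with Theorem \ref{alphaDistribution}, further pins down the admissible positions of exterior $T$-summands and rules out any configuration permitting a ``side-exit'' from $\Delta$.

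The main obstacle I expect is the geometric claim that $\Pi(\Delta)$ is the unique AR-neighbour of the exterior of $\Delta$: in $\mathcal C_2(D_n)$ the AR-quiver outside the triangle mixes the two top $\alpha$-orbits with the $\beta$-orbits, and one must verify carefully, probably case by case over the three quiver types of subsection \ref{ClTiltAlgTypeD}, that the attachment at the connecting vertex really only glues $\Delta$ to the rest of $\mathcal C_2(D_n)$ along $\Pi(\Delta)$, with no secondary gluing along other vertices of the triangle that could bypass the apex.
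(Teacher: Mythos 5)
First, a point of comparison: the paper does not prove this lemma at all --- it is imported verbatim from \cite{bow}, so there is no in-paper argument to measure your proposal against. Judged on its own, your sketch identifies the right picture (the triangular region with $\Pi(\Delta)$ at the apex, the simples of $\modf kA_l$ along the outermost $\tau$-orbit) but rests on a geometric claim that is false: that $\Pi(\Delta)$ is the \emph{unique} indecomposable of $\Delta$ adjacent in the AR-quiver of $\mathcal C_2(D_n)$ to indecomposables outside $\Delta$. For $l\geq 2$ every vertex on the two slanted sides of the triangle has full valency in the ambient AR-quiver: a ``$\Delta$-projective'' $M_{[1,k]}$ with $k<l$ sits in a mesh whose other middle terms lie outside the triangle, so it both receives and emits irreducible maps to objects not in $\Delta$. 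Consequently directed paths of irreducible morphisms can and do leave $\Delta$ without ever visiting the apex, and the assertion ``each such path passes through $\Pi(\Delta)$'' does not hold. Your argument as written therefore proves nothing about those side-exiting compositions.

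The actual content of the lemma is exactly the point you defer to your last sentence and do not carry out: one must use that the target (or source) is a summand of the \emph{cluster tilting object} $T$, not an arbitrary indecomposable. The summand of $T$ guarding the triangle (the object at the connecting vertex, essentially $\Pi(\Delta)$ together with the adjacent $\alpha$-objects) has an Ext-support region of the kind drawn in figure \ref{extSupportAlpha}, and Ext-orthogonality of $\add T$ forces every summand of $T$ outside $\Delta$ to lie outside that region. One then has to check, via the mesh relations, that for such positions every composition of irreducible maps from $X\in\Delta$ that exits through a slanted side either vanishes modulo the mesh relations or is equal to a composition routed through $\Pi(\Delta)$ --- this is where the Hom-hammock computation actually lives, and it is the step your proposal replaces with the incorrect ``unique neighbour'' claim. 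Without it the proof is not salvageable as an argument about paths in the AR-quiver alone, since the lemma is simply false if ``summand of $T$'' is weakened to ``arbitrary indecomposable outside $\Delta$''.
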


The last result of this section is the equivalent of theorem \ref{HermundA} for type D. 
\begin{theorem}\cite{HermundD}\label{hermundD}
Let $T$ and $T'$ be two cluster tilting objects of type $D_n$. Then the cluster tilted algebras $\End_{\mathcal{C}}(T)^{op}$ and $\End_{\mathcal{C}}(T')^{op}$ are isomorphic if and only if $T$=$\phi^j\tau^iT'$ for some integers $i$ and $j$.
\end{theorem}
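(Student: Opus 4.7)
The plan is to prove the two directions separately, with the backward implication being essentially formal and the forward one carrying the real content. For the backward direction, I would observe that $\tau$ is an autoequivalence of $\mathcal C_2(D_n)$, and that $\phi$—though defined initially only on $\alpha$-objects as the flip partner in an AR-triangle—extends to an autoequivalence of the whole cluster category, namely the one induced by the nontrivial diagram automorphism of $D_n$ that swaps the two leaves at one end. Any autoequivalence $G$ of $\mathcal C_2(D_n)$ induces an isomorphism $\End_{\mathcal C}(T)^{op}\cong \End_{\mathcal C}(GT)^{op}$, so if $T=\phi^j\tau^iT'$ the corresponding cluster tilted algebras are isomorphic.

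For the forward direction, assume $\End_{\mathcal C}(T)^{op}\cong\End_{\mathcal C}(T')^{op}$. Then both cluster tilted algebras share the same Gabriel quiver $Q$, and by Section \ref{ClTiltAlgTypeD} this $Q$ is of one of the three listed types, each built from a central core with $A$-type subquivers attached at the connecting vertices. The strategy is to show that $Q$, together with the $\langle\tau,\phi\rangle$-orbit data, determines the cluster tilting object uniquely. First, by Theorem \ref{alphaDistribution} the number of $\alpha$-summands of $T$ and their mutual positions under $\tau$ and $\phi$ are fully prescribed by $Q$: the sizes $n_1,n_2,\dots$ of the $A$-subquivers at the connecting vertices dictate the $\tau$-distances (up to $\phi$) between consecutive $\alpha$-summands. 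Applying a suitable power of $\tau$ to $T'$, and then $\phi$ if necessary, I align a chosen $\alpha$-summand of $\phi^j\tau^iT'$ with an $\alpha$-summand of $T$; Theorem \ref{alphaDistribution} then forces the remaining $\alpha$-summands to align as well.

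Once the $\alpha$-summands of $T$ and $\phi^j\tau^iT'$ agree, the remaining ($\beta$-) summands lie by Lemma \ref{MorpAtriangle} in $A$-triangles $\Delta$ at the connecting vertices, each with a prescribed projective-injective object $\Pi(\Delta)$. Within each $\Delta$, which is equivalent to $\modf kA_l$ for some $l$ with linear orientation, the $\beta$-summands of $T$ form a cluster tilting object whose cluster tilted quiver is the $A$-type subquiver attached at the corresponding connecting vertex. By Theorem \ref{HermundA}, two such cluster tilting objects of type $A_l$ with the same quiver differ by a power of $\tau$ within $\Delta$; but since $\Pi(\Delta)$ is already pinned down by the alignment of the adjacent $\alpha$-summand, this $\tau$-power must be trivial, so the $\beta$-summands of $T$ and $\phi^j\tau^iT'$ also coincide. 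Hence $T=\phi^j\tau^iT'$ as claimed.

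The main obstacle is the careful bookkeeping that shows a single global $\phi^j\tau^i$—rather than independent flips and twists at each $A$-triangle—suffices to align the entire $\alpha$-structure simultaneously. This relies on the precise mutual positions recorded in Theorem \ref{alphaDistribution}, and on verifying that the $\phi$-flip acts compatibly with all connecting vertices at once (a consequence of $\phi$ being the global diagram automorphism, not just a local operation). Once this compatibility is established, the reduction to Theorem \ref{HermundA} inside each $A$-triangle is routine.
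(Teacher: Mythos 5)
First, a point of comparison: the paper does not prove Theorem \ref{hermundD} at all --- it is imported from \cite{HermundD} as a black box --- so there is no internal argument to measure yours against; what follows is an assessment of your proposal on its own terms. Your backward direction and the first half of the forward direction are sound in outline: reading $\phi$ as the autoequivalence induced by the order-two diagram automorphism of $D_n$ (which restricts to the flip on $\alpha$-objects, commutes with $\tau$, and for odd $n$ coincides with $\tau^n$) is the right way to make sense of $\phi^j\tau^iT'$, and Theorem \ref{alphaDistribution} does determine the mutual positions of the $\alpha$-summands from the quiver, so a single global $\phi^j\tau^i$ can align all of them simultaneously.

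The genuine gap is in your treatment of the $\beta$-summands. The summands of $T$ lying in an $A$-triangle $\Delta$ form a tilting module in $\Delta\simeq\modf kA_l$ (linear orientation), not a cluster tilting object of $\mathcal C_2(A_l)$, and Theorem \ref{HermundA} does not give uniqueness inside $\Delta$: two tilting modules over linear $kA_l$ with isomorphic endomorphism quivers need not be isomorphic, and the power of $\tau$ relating the induced cluster tilting objects in $\mathcal C_2(A_l)$ need not preserve the subcategory $\Delta$. Already for $l=2$ the tilting modules $P_1\oplus S_1$ and $P_1\oplus S_2$ both have quiver $A_2$ and both contain the projective-injective $P_1=\Pi(\Delta)$; the nontrivial power of $\tau$ in $\mathcal C_2(A_2)$ relating them carries one object containing $\Pi(\Delta)$ to another object containing $\Pi(\Delta)$ without fixing $\Pi(\Delta)$, so your inference ``$\Pi(\Delta)$ is pinned down, hence the $\tau$-power is trivial'' does not go through (and it is not even established that $\Pi(\Delta)$ is a summand of $T$ in general). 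What actually separates such configurations is the attachment data --- which vertex of $Q_l$ is joined to the connecting vertex and with which orientation, all such morphisms factoring through $\Pi(\Delta)$ by Lemma \ref{MorpAtriangle} --- and closing the argument requires showing, say by induction on $l$ or by the type-$A$ analysis of \cite{bow}, that the full quiver $Q$ including these attaching arrows determines the tilting module of $\Delta$ uniquely once the adjacent $\alpha$-summands are fixed. Without that step the forward direction is incomplete.
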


\section{$2$-cluster tilting subcategories of type $A$}\label{sectA}

In this section we will apply the method described in the last part of section \ref{theory} to the cases where \(H\) is a path-algebra of type \(kA_n\), where \(A_n\) is some orientation of the Dynkin diagram \(A_n\). As discussed in section \ref{theory}, it is sufficient to study under which functors of the form \(\tau^s\) the cluster tilting objects of the cluster category \(\mathcal C_2(A_n)\) are periodic, where \(s\in\mathbb Z\). We give a complete overview of the smallest possible positive value of \(s\) for each cluster tilting object of \(\mathcal C_2(A_n)\). Furthermore, we show that the periodicity of a \(2\)-cluster tilting object depends on symmetric properties of the quiver of the corresponding cluster tilted algebra. 

First we need to recall some facts about the AR-quiver of the \(2\)-cluster category of \(\mathcal C_2(A_n)\). Recall that if \(n\) is even then all the \(\tau\)-orbits in  \(\mathcal C_2(A_n)\) have \(n+3\) objects. If \(n=2l+1\) is odd, then the innermost \(\tau\)-orbit has \(l+2\) objects, and all the other \(\tau\)-orbits have \(n+3\)-objects. Hence for any indecomposable object \(Z\) in \(\mathcal C_2(A_n)\) we have \(\tau^{(n+3)}Z=Z\) (for \(n\) even and odd). We first give a result narrowing down the possible values of \(s\) for which it is interesting to study periodicity of \(\tau^s\) on the \(2\)-cluster tilting subcategories.

\begin{theorem}\label{valuesOfs}
Let \(T\) be a \(2\)-cluster tilting object in \(\mathcal C_2(A_n)\) such that \(\tau^sT=T\) for some \(0<s<n+3\). Then
\begin{itemize}
\item[1.] either \(s=\frac{n+3}{2}\), with \(n\) an odd number and \(T_i\) has exactly one summand in the innermost \(\tau\)-orbit
\item[2.] or \(s=\frac{n+3}{3}\) and \(n\) is divisible by \(3\).
\end{itemize}
\end{theorem}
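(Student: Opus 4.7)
My plan is to convert the periodicity condition into a divisibility constraint by counting summands of $T$ in each $\tau$-orbit. The outer $\tau$-orbits of $\mathcal{C}_2(A_n)$ all have size $n+3$ and, for $n$ odd, there is in addition an innermost orbit of size $(n+3)/2$; in particular $\tau^{n+3}$ acts as the identity on every indecomposable, so the set of $s \in \mathbb{Z}$ with $\tau^s T = T$ is a subgroup generated by a positive divisor $s_0$ of $n+3$. Any $s$ as in the hypothesis satisfies $s_0 \mid s$, so it suffices to classify $s_0$; writing $k := (n+3)/s_0$, the condition $s < n+3$ forces $k \geq 2$, and I aim to show $k \in \{2,3\}$ with the additional constraints described in the two cases.

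Let $a_0$ denote the number of indecomposable summands of $T$ lying in the innermost orbit (set $a_0 = 0$ when $n$ is even), and let $a_i$ be the number of summands in the $i$-th outer $\tau$-orbit. On any outer orbit the cyclic group $\langle \tau^{s_0} \rangle$ acts freely with orbits of size $k$, so $k \mid a_i$ for every $i \geq 1$; summing and using $\sum_i a_i = n - a_0$ together with $k \mid (n+3)$ yields the key divisibility
\[ k \mid 3 + a_0. \]
For $n$ even this reads $k \mid 3$, forcing $k = 3$, whence $s_0 = (n+3)/3$ and $3 \mid n$; this is case 2.

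For $n$ odd I appeal to the standard combinatorial model identifying indecomposables of $\mathcal{C}_2(A_n)$ with diagonals of a regular $(n+3)$-gon and $2$-cluster tilting objects with triangulations; under this identification the innermost $\tau$-orbit corresponds to the $(n+3)/2$ diameters of the polygon, and since any two diameters meet at the centre, a triangulation contains at most one diameter. Hence $a_0 \in \{0,1\}$. If $a_0 = 0$ the displayed divisibility again forces $k = 3$ and case 2. If $a_0 = 1$ then $k \mid 4$, so $k \in \{2,4\}$; the value $k = 4$ is ruled out by a second divisibility from the inner orbit itself: for $k = 4$ the element $\tau^{s_0}$ acts on the inner orbit of size $(n+3)/2$ with orbit size $((n+3)/2)/\gcd(s_0,(n+3)/2) = 2$, forcing $a_0$ to be even and contradicting $a_0 = 1$. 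This leaves $k = 2$, so $s_0 = (n+3)/2$ and $T$ has a unique inner summand, which is case 1. The delicate point of the argument is the bound $a_0 \leq 1$ for $n$ odd; the polygon model makes it transparent, and one may alternatively deduce it from Theorem \ref{bowPerp} together with a direct verification that two distinct indecomposables in the innermost $\tau$-orbit have nontrivial $\Ext^1$ between them.
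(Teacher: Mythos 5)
Your proof is correct and follows essentially the same orbit-counting strategy as the paper: both arguments reduce the claim to the fact that the number of summands of \(T\) in each \(\tau\)-orbit must be a multiple of the size of the \(\langle\tau^{s_0}\rangle\)-orbits there, combined with the observation that the innermost orbit (for \(n\) odd) can carry at most one summand. Your version is somewhat more systematic — the uniform divisibility \(k \mid 3+a_0\) together with the explicit elimination of \(k=4\) replaces the paper's appeal to the minimality of the period of the inner summand, and you justify \(a_0\le 1\) via the polygon/triangulation model where the paper reads it off the \(\Ext\)-support figure — but these are presentational rather than substantive differences.
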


\begin{proof}
We will first assume that all indecomposable summands of \(T\) lie in \(\tau\)-orbits in the AR-quiver of \(\mathcal C_2(A_n)\)  with \(n+3\) indecomposable objects. Since we have \(\tau^{n+3}T=T\) and have assumed \(\tau^s T= T\), we must have that \(s\) is a factor of \(n+3\), i.e. \(n+3=sa\) . Then \(a=\frac{n+3}{s}\) must be a factor of \(n\), since each \(\tau\)-orbit containing one indecomposable summand of \( T\) must contain a multiple of \(a\) indecomposable summands of \(T\), and \(T\) has \(n\) indecomposable summands. That is \((n+3)=sa\) and \(n=ab\) for some \(b\in\mathbb N\). Hence \(a\in\left\{1,3\right\}\). However by assumption \(s<n+3\), so that \(a>1\), therefore we have \(a=3\) and the second part follows. 

For the first part, assume that there is at least one indecomposable summand \(T_i\) of \(T\) that lie in a \(\tau\)-orbit containing \(\frac{n+3}{2}\) indecomposable objects (clearly this can only happen when \(n\) is odd). Then we have \(\tau^{\frac{n+3}{2}}T_i=T_i\). Note that this is the minimal value of \(s\) such that \(\tau^sT_i=T_i\). This is most easily seen in a figure. Drawing the \(\Ext\)-support of \(T_i\) will reveal that it has extension to all other objects in the innermost \(\tau\)-orbit. Hence the claim is proved.
\end{proof}

We will study the first case of theorem \ref{valuesOfs} closer in subsection \ref{2sym}, and the second case in subsection \ref{3sym}.

\subsection{2-symmetric $2$-cluster tilting objects of type A}\label{2sym}
In this subsection we will study \(2\)-cluster tilting objects of \(\mathcal C_2(A_n)\) which are covered by part \(1\) of theorem \ref{valuesOfs}. It is clear from theorem \ref{valuesOfs} that if \(U\) is a \(2\)-cluster tilting object of \(\mathcal C_2(A_n)\) such that \(\tau^{\frac{n+3}{2}} U= U\) in \(\mathcal C_2(A_n)\), then \(U\) has one indecomposable summand lying in the innermost \(\tau\)-orbit. Our aim is to describe the quiver \(Q\) of \(\End_{\mathcal C}(U)^{op}\) when \(U\) is such that \(\tau^{\frac{n+3}{2}}U=U\). We start with an immediate consequence of \(U\) having an indecomposable summand in the innermost \(\tau\)-orbit.

\begin{theorem}\label{decompU}
Let \( U\) be a basic \(2\)-cluster tilting object of  \(\mathcal C_2(A_n)\) with a summand \(U_{l+1}\) in the innermost \(\tau\)-orbit.  Then the quiver of \(\End_{\mathcal C}(U)^{op}\) is of the form illustrated in figure \ref{QuiverAwmiddle} , where \(Q_1\) and \(Q_2\) are quivers of cluster tilted algebras of type \(A_{\frac{n+1}{2}}\), and \(Q_1\) and \(Q_2\) have exactly one vertex in common, namely the vertex corresponding to the summand \(U_{l+1}\).
\end{theorem}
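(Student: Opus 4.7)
The plan is to locate the vertex $v$ of $Q$ corresponding to $U_{l+1}$ via Theorem~\ref{bowPerp}, then verify that $Q$ splits at $v$ into two subquivers of the stated form. First, since $n=2l+1$ is odd, the AR-quiver of $\mathcal{C}_2(A_n)$ has $l+1$ $\tau$-orbits and the innermost orbit is the $(l+1)$-th from the outermost. Applying Theorem~\ref{bowPerp} to $v$, the hypothesis that $U_{l+1}$ lies in orbit $l+1$ forces that deleting $v$ from $Q$ produces two connected subquivers $Q_1'$ and $Q_2'$ with $|Q_1'| = |Q_2'| = l$.

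Next I would define $Q_j$ (for $j=1,2$) to be the full subquiver of $Q$ on the vertex set $V(Q_j') \cup \{v\}$. Because deletion of $v$ disconnects $Q$ into $Q_1'$ and $Q_2'$, there are no arrows in $Q$ between $V(Q_1')$ and $V(Q_2')$, so $Q = Q_1 \cup Q_2$, the intersection $Q_1 \cap Q_2$ consists of the single vertex $v$, and each $Q_j$ has $(n+1)/2$ vertices.

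The main step is to verify that each $Q_j$ is itself a quiver of a cluster tilted algebra of type $A_{(n+1)/2}$. For this I would check the defining conditions listed in Section~\ref{quivers}: all non-trivial cycles oriented of length $3$, at most four neighbours per vertex, and the combinatorial conditions on degree-$3$ and degree-$4$ vertices. The key observation is that any $3$-cycle of $Q$ through $v$ must have its other two vertices in a single component $Q_j'$, because $Q_1'$ and $Q_2'$ are separated by $v$; hence each such cycle is inherited cleanly by exactly one of $Q_1, Q_2$, and no cycle of $Q_j$ crosses between the two sides. For $w \in V(Q_j')$ the arrows incident to $w$ in $Q_j$ coincide with those in $Q$, so the local classification conditions at $w$ transfer verbatim. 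Connectedness of $Q_j$ follows since $Q_j'$ is connected and $v$ is adjacent to at least one vertex of $Q_j'$ (otherwise $|Q_j'| = 0$, contradicting $l \geq 1$).

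The most delicate point will be the case analysis at $v$ itself. Since $v$ has at least one neighbour on each side of the cut, its $Q$-degree is at least $2$; the classification's degree conditions then force the split of $v$'s neighbours between the two sides to be $1+1$, $2+1$, or $2+2$, with any $3$-cycle through $v$ lying entirely on one side. In each case the induced local structure of $v$ in $Q_j$ satisfies the appropriate degree-$1$, degree-$2$, or degree-$3$ classifying condition, and combined with the previous paragraph this shows $Q_j$ is the quiver of a cluster tilted algebra of type $A_{l+1}$.
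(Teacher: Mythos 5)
Your argument is correct, but it follows a genuinely different route from the paper. The paper's proof works at the level of objects: it cites \cite{bow} to decompose $U=U_1\oplus U_{l+1}\oplus U_2$, where $U_1$ and $U_2$ are $2$-cluster tilting objects of type $A_{\frac{n-1}{2}}$ living in the two $A$-triangles determined by $U_{l+1}$, so that $U_1\oplus U_{l+1}$ and $U_{l+1}\oplus U_2$ are directly exhibited as cluster tilting objects of type $A_{\frac{n+1}{2}}$ realizing $Q_1$ and $Q_2$; this also yields the explicit distribution of summands in the AR-quiver (figure \ref{figMiddle}), which the paper reuses in the next theorem. You instead argue purely combinatorially on the quiver $Q$: locate $v$ via the vertex-deletion/orbit correspondence, split $Q$ at $v$, and verify Vatne's local conditions on each half. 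Two dependencies of your route are worth flagging, though neither is fatal. First, you apply Theorem \ref{bowPerp} in the converse direction (orbit number $\Rightarrow$ sizes of the two components after deleting $v$); the literal statement assumes the two-component decomposition as a hypothesis, so you should really invoke Proposition 2.2 of \cite{bow} (which is what the paper's own proof of Theorem \ref{bowPerp} uses and which does run in the direction you need), or add the short block-structure argument showing that a vertex whose summand lies in orbit $l+1>1$ must disconnect $Q$ into two components. Second, your final step needs the listed description of type-$A$ quivers to be a full characterization (sufficiency as well as necessity); the paper states it only as a description, but the \emph{if and only if} is Vatne's theorem in \cite{DagfinnA}, so this is legitimate provided you cite it. With those two citations made precise, your case analysis at $v$ (splits $1+1$, $2+1$, $2+2$, with each $3$-cycle through $v$ confined to one side) is complete and the proof stands; what it does not give you, and what the paper's version does, is the concrete placement of the summands $U_1$ and $U_2$ in the AR-quiver.
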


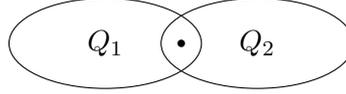
\begin{figure}[h]
\begin{center}
\begin{tikzpicture}
\node(M) at (0,0)[vertex]{}; 
\draw (1,0) ellipse (36pt and 17pt);
\draw (-1,0) ellipse (36pt and 17pt);
\node(1) at (-1,0){$Q_1$};
\node(2) at (1,0){$Q_2$};
\end{tikzpicture}
\end{center}\caption{Quiver of type \(A_n\) with one summand in innermost \(\tau\)-orbit}\label{QuiverAwmiddle}
\end{figure}

\begin{proof}
Direct from \cite{bow} we have that \( U\) may be considered as \( U=U_1\oplus  U_{l+1}\oplus U_2\) where \( U_{l+1}\) lies in the innermost \(\tau\)-orbit of \(\mathcal C_2(A_n)\), \( U_1\) and \( U_2\) are \(2\)-cluster tilting objects of type \(A_{\frac{n-1}{2}}\). Since \(Q\) is connected \( U_1\oplus U_{l+1}\) and \( U_{l+1}\oplus U_2\) is connected of type \(A_{\frac{n+1}{2}}\). The distribution of summands is illustrated in figure \ref{figMiddle}.
\end{proof}

\begin{figure}
\[ \scalebox{.99} { \begin{tikzpicture}[scale=.5,yscale=-1]
 \fill [fill1] (0,.5) -- (.5,.5) -- (5,5) -- (9.5,.5) -- (10,.5) -- (10,7.5) -- (9.5,7.5) -- (5,3) -- (.5,7.5) -- (0,7.5) -- cycle;
 \foreach \x in {0,...,5}
  \foreach \y in {1,3,5,7}
   \node (\y-\x) at (\x*2,\y) [vertex] {};
 \foreach \x in {0,...,4}
  \foreach \y in {2,4,6}
   \node (\y-\x) at (\x*2+1,\y) [vertex] {};
 \replacevertex[fill1]{(4-2)}{[tvertex] {$ U_{l+1}$}}
 \foreach \xa/\xb in {0/1,1/2,2/3,3/4,4/5}
  \foreach \ya/\yb in {1/2,3/4,5/4,7/6}
   {
    \draw [->] (\ya-\xa) -- (\yb-\xa);
    \draw [->] (\yb-\xa) -- (\ya-\xb);
   }
 \foreach \xa/\xb in {0/1,1/2,2/3,3/4,4/5}
  \foreach \ya/\yb in {3/2,5/6}
   {
    \draw [thick,loosely dotted] (\ya-\xa) -- (\yb-\xa);
    \draw [thick,loosely dotted] (\yb-\xa) -- (\ya-\xb);
   }
 \draw [dashed] (0,.5) -- (0,7.5); 
 \draw [dashed] (10,.5) -- (10,7.5);
 \draw [decorate,decoration=brace] (1.5,.5) -- node [above] {$C_{\frac{n-1}{2}}$} (8.5,.5);
 \draw [decorate,decoration={brace,mirror}] (1.5,7.5) -- node [below] {$C_{\frac{n-1}{2}}$} (8.5,7.5);
\end{tikzpicture} } \]
\caption{figure} \label{figMiddle}
\end{figure}

\begin{definition}
Let \(U\) be a \(2\)-cluster tilting object of \(\mathcal C_2(A_n)\) with quiver \(Q\), where \(n=2l+1\) and one summand \(U_{l+1}\) of \(U\) lie in the innermost \(\tau\)-orbit of \(\mathcal C_2(A_n)\). Then we call \( U\) a \(2\)-symmetric \(2\)-cluster tilting object if \(Q_1=Q_2\), (i.e. if the quiver \(Q\) is mirror-symmetric about the vertex corresponding to \(U_{l+1}\)).
\end{definition}

As we will go on to show, a \(2\)-cluster tilting object \(T\) of \(\mathcal C_2(A_n)\) is \(2\)-symmetric if and only if \(\tau^{(n+3)/2})T=T\). In order to show this we will first give a characterization of pairs of indecomposable summands that are closed under \(\tau^{(n+3)/2}\). 

\begin{definition}\label{vertically}
Let \(M\) and \(N\) be two indecomposable objects of \(\mathcal C_2(A_n)\). We call $M$ and $N$ vertically aligned if there exists a  sequence of indecomposable objects $M$, $M_1$, $M_2$, $\ldots$ ,$M_t$, $N$ where $M\oplus M_1$ is the middle term of an AR-sequence, $M_i\oplus M_{i+1}$ is the middle term of an AR-sequence for each $i\in \left\{1, \ldots t\right\}$, and lastly $M_t\oplus N$ is the middle term of an AR-sequence.
\end{definition}

Note that two vertically aligned indecomposable objects lying in the same \(\tau\)-orbit of the AR-quiver, have to live in an \(\tau\)-orbit with \(n+3\) indecomposable objects.

\begin{lemma}\label{pairwiseVertically}
Let $n=2l+1$, and let $T_i$ and $T_j$ be indecomposable vertically aligned objects lying in the $\tau$-orbit of $\mathcal{C}^2(A_n)$. Then $\tau^{l+2}T_i=T_j$ and $\tau^{l+2}T_j=T_i$.
\end{lemma}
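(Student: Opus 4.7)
The approach will be to use the standard combinatorial polygon model for $\mathcal{C}_2(A_n)$, in which indecomposables correspond to diagonals $\{p, q\}$ of a regular $(n+3)$-gon (vertex labels read modulo $n+3$) and the Auslander--Reiten translate acts as a rotation by one vertex, $\tau\{p,q\} = \{p-1, q-1\}$. In this model the AR-triangle ending at a diagonal $\{p, q\}$ has middle term $\{p-1, q\} \oplus \{p, q-1\}$ whenever both terms are genuine diagonals (rather than edges of the polygon), which is always the case at the steps of a vertical-alignment chain.

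The crucial observation is that the two summands in such a middle term, $\{p-1, q\}$ and $\{p, q-1\}$, share the same endpoint sum $(p-1)+q = p+(q-1) = p + q - 1$ modulo $n+3$. Consequently, for a vertical-alignment chain $T_i = M_0, M_1, \ldots, M_t, M_{t+1} = T_j$ with $M_k = \{a_k, b_k\}$, each step preserves the quantity $a_k + b_k \pmod{n+3}$, and so $p_i + q_i \equiv p_j + q_j \pmod{n+3}$ where $T_i = \{p_i, q_i\}$ and $T_j = \{p_j, q_j\}$.

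I will then combine this invariant with the hypothesis that $T_i$ and $T_j$ lie in the same $\tau$-orbit, which by the remark following Definition \ref{vertically} must have $n+3 = 2l+4$ elements. Writing $T_j = \tau^k T_i = \{p_i - k, q_i - k\}$ for a unique $k \in \{0, 1, \ldots, n+2\}$, the invariant forces $2k \equiv 0 \pmod{n+3}$, so $k \in \{0, l+2\}$. Since $T_i \ne T_j$ in the intended application (the case $T_i = T_j$ would force $\tau^{l+2}T_i = T_i$, putting $T_i$ in the innermost orbit of size $l+2 < n+3$ and contradicting the hypothesis on orbit size), we conclude $k = l+2$, i.e.\ $\tau^{l+2}T_i = T_j$. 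The reverse equality $\tau^{l+2}T_j = T_i$ is then immediate from $\tau^{2(l+2)} = \tau^{n+3} = \mathrm{id}$ on the full orbit, which makes $\tau^{l+2}$ an involution there.

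The main obstacle I anticipate is the boundary behaviour of the middle-term formula: when one of $\{p-1, q\}$ or $\{p, q-1\}$ degenerates to an edge of the $(n+3)$-gon (and thus represents the zero object in $\mathcal{C}_2(A_n)$), the corresponding AR-triangle has a one-summand middle term. However, the very definition of vertical alignment explicitly requires both summands at every step to be indecomposable objects, so only the generic AR-triangles with two honest diagonals in the middle term ever enter the chain, and the endpoint-sum invariant is therefore preserved unconditionally from $T_i$ all the way through to $T_j$.
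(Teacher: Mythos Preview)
Your argument is correct and takes a genuinely different route from the paper's own proof. The paper argues by placing $T_i$ and $T_j$ inside a fundamental domain equivalent to $\modf kA_n$ (linear orientation), assigning $T_i$ to row $d$ and $T_j$ to the mirror row $n-d+1$, and then literally counting how many indecomposables lie between them along the $\tau$-orbit to obtain the shift $l+2$.

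Your approach via the polygon model is more conceptual: you isolate a single numerical invariant, the endpoint sum $p+q \pmod{n+3}$, observe that it is preserved along any vertical-alignment chain (because the two summands of each AR-triangle middle term share the same sum), and then intersect this with the $\tau$-orbit condition $T_j=\tau^kT_i$ to force $2k\equiv 0\pmod{n+3}$. This bypasses all coordinate bookkeeping and yields the involution statement $\tau^{l+2}T_j=T_i$ for free from $\tau^{n+3}=\mathrm{id}$. It also makes the converse (the other direction of Theorem~\ref{thm18}) essentially immediate, since $\tau^{l+2}$ visibly preserves the endpoint sum.

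One small point worth tightening: you rule out $k=0$ by saying ``$T_i\neq T_j$ in the intended application''. This is the right move, but note that the paper's definition of vertical alignment does not literally exclude $T_i=T_j$ (an oscillating chain $T_i,M_1,T_i$ is formally allowed). The paper's own proof makes the same tacit assumption by placing $T_i$ and $T_j$ in distinct rows $d$ and $n-d+1$. In both proofs the case $T_i=T_j$ must be excluded by the hypothesis that the common $\tau$-orbit has $n+3$ elements (so $\tau^{l+2}T_i\neq T_i$), exactly as you indicate; it would be cleaner to state this as a hypothesis rather than as an aside.
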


\begin{proof}
Since \(T_i\) and \(T_j\) are vertically aligned, they can not be in the innermost \(\tau\)-orbit. Without loss of generality we may assume that \(T_i\) and \(T_j\) lie in the part of the category corresponding to the module category of \(kA_n\) with linear orientation. This will make it possible to count how many objects lie between \(T_i\) and \(T_j\), by first counting within the AR-quiver of \(kA_n\) and then adding the extra objects corresponding to the projectives shifted once. Let the row of the AR-quiver of \(kA_n\) with one object be row nr \(1\), and the row with \(n\) objects be row nr \(n\), i.e. row nr \(1\) and row nr \(n\) are both contained in the outermost \(\tau\)-orbit of the AR-quiver of \(\mathcal C_2(A_n)\).

Assume now that \(T_i\) is located in row nr \(d\) of the AR-quiver of \(kA_n\), where \(0<d<l+1\). Moreover, assume that within this row of the AR-quiver of \(kA_n\) there are \(s\) objects to the right of \(T_i\) and \(d-s-1\) objects to the left of \(T_i\) (total of \(d\) objects).

Since \(T_i\) and \(T_j\) are assumed to be vertically aligned and in the same \(\tau\)-orbit of the AR-quiver of \(\mathcal C_2(A_n)\), the location of \(T_j\) correspond to being in row \(n-d+1\) in the AR-quiver of \(kA_n\). Furthermore, row \(n-d+1\) has \(n-d+1-d\) more objects than row nr \(d\). Hence there is \(\frac{1}{2}(n-2d+1)+(d-s-1)\) objects to the left of \(T_j\) within the row \(n-d+1\) of the AR-quiver of \(kA_n\), and there are \(s+\frac{1}{2}(n-2d+1)\) objects to the right of \(T_j\).

Starting from \(T_i\) and counting indecomposable objects along the \(\tau\)-orbit towards \(T_j\), there are 
\(s+1+\frac{1}{2}(n-2d+1)+(d-s-1)=l+1\) objects between \(T_i\) and \(T_j\). Hence \(\tau^{l+2}T_i=T_j\).
Counting along the \(\tau\)-orbit from \(T_j\) towards \(T_i\) yields the same equations and hence \(\tau^{l+2}T_j=T_i\). 
\end{proof}

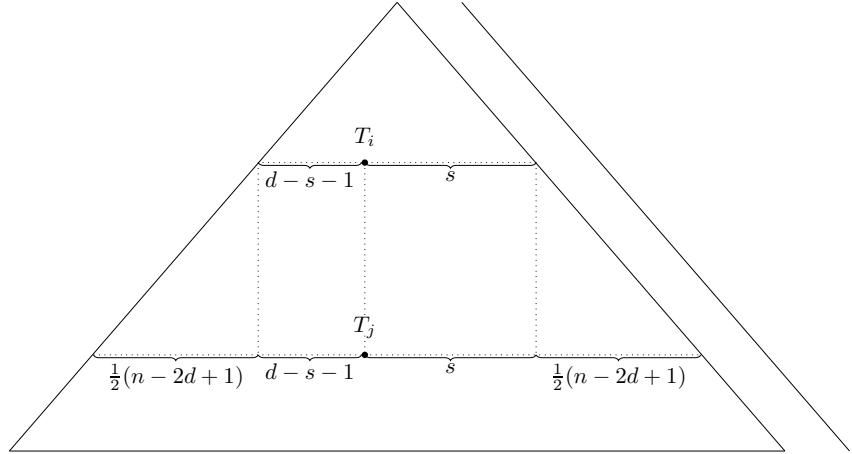
\begin{figure}[h]
\begin{center}
\scalebox{.85}{
\begin{tikzpicture}
\node(O) at (-0.5,1.5)[vertex]{};
\node[above] at (O.north)[]{\small $T_i$};
\node(N) at (-0.5,-1.5)[vertex]{};
\node[above] at (N.north)[]{\small $T_j$};
\draw(-6,-3)--(0,4); 
\draw(6,-3)--(0,4);
\draw(1,4)--(7,-3);
\draw(-6,-3)--(6,-3);
\draw[dotted](-0.55,1.5)--(-2.15,1.5);
\draw[decorate,decoration=brace](-0.55,1.5)--node[anchor=north]{\small $d-s-1$}(-2.15,1.5);
\draw[decorate,decoration=brace] (-0.55,-1.5)--node[anchor=north]{\small $d-s-1$}(-2.15,- 1.5);
\draw[dotted](-0.55,-1.5)--(-4.7,-1.5);
\draw[decorate,decoration=brace](-2.15,-1.5)--node[anchor=north]{\small $\frac{1}{2}(n-2d+1) $}(-4.7,-1.5);
\draw[dotted](-2.15,1.5)--(-2.15,-1.5);
\draw[dotted](-0.5,1.5)--(-0.5,-0.9);
\draw[dotted](-0.5,-1.2)--(-0.5,-1.5);
\draw[decorate,decoration=brace](2.15,1.5)--node[anchor=north]{\small $s$}(-0.5,1.5);
\draw[dotted](2.15,1.5)--(-0.5,1.5);
\draw[dotted](2.15,1.5)--(2.15,-1.5);
\draw[decorate,decoration=brace](2.15,-1.5)--node[anchor=north]{\small $s$}(-0.5,-1.5);
\draw[dotted](-0.5,-1.5)--(4.7,-1.5);
\draw[decorate,decoration=brace](4.7,-1.5)--node[anchor=north]{\small $\frac{1}{2}(n-2d+1)$}(2.15,-1.5);
\end{tikzpicture}}
\end{center}\caption{Schematic figure of \(\mathcal C_2(A_n)\). Objects \(T_i\) and \(T_j\) lie in the \(d\)-outermost \(\tau\)-orbit.}\label{skjmatiskA}
\end{figure}

\begin{theorem}\label{thm18}
Let \(n=2l+1\) and let \(T_i\) and \(T_j\) be indecomposable objects lying in a \(tau\)-orbit of \(\mathcal C_2(A_n)\) containing \(n+3\) objects. Then \(T_i\) and \(T_j\) are vertically aligned if and only if \(\tau^{l+2}T_i=T_j\) and \(\tau^{l+2}T_j=T_i\).
\end{theorem}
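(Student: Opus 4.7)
The plan is as follows. The forward implication, namely that vertical alignment of $T_i$ and $T_j$ (in a common $\tau$-orbit of size $n+3$) forces $\tau^{l+2}T_i = T_j$ and $\tau^{l+2}T_j = T_i$, is already handed to us by Lemma \ref{pairwiseVertically}. So the real work is the converse.

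For the converse, assume $\tau^{l+2}T_i = T_j$; the equation $\tau^{l+2}T_j = T_i$ is then automatic, since on a $\tau$-orbit of size $n+3 = 2(l+2)$ the endofunctor $\tau^{l+2}$ is an involution. The plan is to manufacture, directly from the AR-quiver, an object $T_i^{\vee}$ that is vertically aligned with $T_i$ and lies in the \emph{same} $\tau$-orbit as $T_i$. Once we have $T_i^{\vee}$, Lemma \ref{pairwiseVertically} applies and yields $\tau^{l+2}T_i = T_i^{\vee}$; comparing with the hypothesis $\tau^{l+2}T_i = T_j$ then forces $T_j = T_i^{\vee}$, and $T_i, T_j$ are vertically aligned by definition.

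To construct $T_i^{\vee}$, I would place $T_i$ in row $d$ of the AR-quiver of $kA_n$ with linear orientation (extended to $\mathcal C_2(A_n)$ by the shifted projectives), and assume without loss of generality that $1 \le d \le l$, the case $d \ge l+2$ being symmetric. Then I would build the chain $T_i = M_0, M_1, \ldots, M_t$ inductively: at each step, $M_{k+1}$ is taken to be the unique indecomposable object in row $d+k+1$ such that $M_k \oplus M_{k+1}$ is the middle term of an AR-sequence and the chain keeps moving downward across rows. The staircase geometry of the AR-quiver guarantees that this choice is well-defined as long as the chain is in the interior of the quiver, and the chain terminates in row $n-d+1$ after $t = n-2d$ steps; set $T_i^{\vee} := M_t$.

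The main obstacle is the second verification: that $T_i^{\vee}$ so constructed lies in the same $\tau$-orbit as $T_i$. This is a bookkeeping task that mirrors the counting in the proof of Lemma \ref{pairwiseVertically}. If $T_i$ has $s$ objects to its right in row $d$, following the staircase downward across the rows $d+1, d+2, \ldots, n-d+1$ places $T_i^{\vee}$ with exactly $s + \tfrac{1}{2}(n-2d+1)$ objects to its right in row $n-d+1$, which is precisely the position in that row belonging to the $\tau$-orbit of $T_i$; moreover this orbit must then have $n+3$ elements, as required by the note following Definition \ref{vertically}. Once that position count is carried out, the rest of the proof is immediate: $T_i$ and $T_i^{\vee}$ are vertically aligned by construction and lie in the same $\tau$-orbit, so Lemma \ref{pairwiseVertically} identifies $T_i^{\vee}$ with $\tau^{l+2}T_i = T_j$, completing the argument.
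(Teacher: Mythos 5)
Your overall strategy is sound, and it is essentially the argument the paper leaves implicit: the published proof disposes of the converse in a single sentence (``since $n$ is odd $T_i$ and $T_j$ must be vertically aligned''), whereas you actually supply the missing reasoning --- $T_i$ has a unique vertically aligned partner $T_i^{\vee}$ inside its own $\tau$-orbit, Lemma \ref{pairwiseVertically} forces $T_i^{\vee}=\tau^{l+2}T_i$, and the hypothesis then identifies $T_j$ with $T_i^{\vee}$. Two details need repair. First, your inductive construction is mis-indexed: by Definition \ref{vertically} consecutive terms $M_k\oplus M_{k+1}$ constitute the \emph{entire} middle term of an AR-sequence, and in the AR-quiver the two middle summands of the sequence ending in row $r$ sit in rows $r-1$ and $r+1$, so each step of a vertically aligned chain moves \emph{two} rows, not one. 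There is no object in row $d+1$ forming with $M_0$ the middle term of an AR-sequence, so the first step fails as literally written; the fix is to put $M_{k+1}$ in row $d+2(k+1)$, whence the chain reaches row $n-d+1$ after $(n+1-2d)/2=l+1-d$ steps (which is also exactly where the parity hypothesis on $n$ enters). Second, the ``main obstacle'' you single out is not one: the $\tau$-orbit of $T_i$ consists of the images of the whole of rows $d$ and $n+1-d$ of a fundamental domain --- that is precisely why it has $n+3$ elements --- so any object in row $n-d+1$ lies in that orbit automatically, and no position count is needed for membership; your count of $s+\tfrac{1}{2}(n-2d+1)$ objects to the right is nonetheless correct and simply reproduces the bookkeeping already done in the paper's proof of Lemma \ref{pairwiseVertically}. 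With the row indexing corrected, the proof is complete and, if anything, more careful than the one in the paper.
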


\begin{proof}
One implication is shown in lemma \ref{pairwiseVertically}. Assume that \(T_i\) and \(T_j\) are indecomposable objects such that \(\tau^{\frac{n+3}{2}}T_i=T_j\) and \(\tau^{\frac{n+3}{2}}T_j=T_i\). Since \(n\) is odd \(T_j\) and \(T_i\) must be vertically aligned. 
\end{proof}

\begin{theorem}
Let \( U\) be a basic \(2\)-cluster tilting object of \(\mathcal C_2(A_n)\) where \(n\) is odd. Then \(\tau^{\frac{n+3}{2}} U= U\) if and only if \(U\) is \(2\)-symmetric. 
\end{theorem}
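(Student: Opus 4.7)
The plan is to combine the structural decomposition $U = U_1 \oplus U_{l+1} \oplus U_2$ with quiver $Q = Q_1 \cup \{c\} \cup Q_2$ from Theorem \ref{decompU}, the characterization of $\tau^{(n+3)/2}$ as vertical alignment within outer $\tau$-orbits (Theorem \ref{thm18}), and the translation between vertex positions in $Q$ and $\tau$-orbits given by Theorem \ref{bowPerp}.

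For the forward direction, I start from $\tau^{(n+3)/2}U = U$. Since the innermost $\tau$-orbit has length $(n+3)/2$ we automatically have $\tau^{(n+3)/2}U_{l+1} = U_{l+1}$. For each other summand $U_i$, which sits in an outer $\tau$-orbit of length $n+3$, the object $\tau^{(n+3)/2}U_i$ is some summand $U_j \neq U_i$, and by Theorem \ref{thm18} the pair $U_i, U_j$ is vertically aligned. As $\tau^{(n+3)/2}$ is an autoequivalence, it induces a quiver automorphism $\sigma$ of $Q$ fixing the central vertex $c$. Reading off Figure \ref{figMiddle}, the summands of $U_1$ and $U_2$ live in opposite wings of the AR-quiver separated by the column through $U_{l+1}$, and any vertically aligned pair within a single outer orbit must straddle these two wings. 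Hence $\sigma$ interchanges $Q_1 \setminus \{c\}$ with $Q_2 \setminus \{c\}$, forcing $Q_1 \cong Q_2$ and showing that $U$ is $2$-symmetric.

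For the backward direction, assume $U$ is $2$-symmetric and fix the involution $\sigma$ on the vertices of $Q$ that fixes $c$ and swaps $Q_1 \leftrightarrow Q_2$. For each $i \neq c$, removing $i$ and removing $\sigma(i)$ split $Q$ into subquivers of the same sizes by mirror symmetry, so Theorem \ref{bowPerp} places $U_i$ and $U_{\sigma(i)}$ in the same $\tau$-orbit, while Theorem \ref{decompU} places them in opposite wings of the AR-quiver. Since each outer $\tau$-orbit meets each wing in exactly one object per column, being in the same orbit and in mirrored columns of opposite wings forces $U_i$ and $U_{\sigma(i)}$ to be vertically aligned; Theorem \ref{thm18} then gives $\tau^{(n+3)/2}U_i = U_{\sigma(i)}$. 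Combining these identities with $\tau^{(n+3)/2}U_{l+1} = U_{l+1}$ yields $\tau^{(n+3)/2}U = U$.

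The main obstacle is the last geometric step in the backward direction: verifying that the abstract mirror symmetry of the quiver $Q$ is realized as a spatial reflection of the AR-quiver, so that vertices related by $\sigma$ index summands in mirrored columns. This amounts to a bookkeeping argument in the spirit of the proof of Lemma \ref{pairwiseVertically}: Theorem \ref{bowPerp} pins down the row of each summand from the sizes of the subquivers produced by deleting the corresponding vertex, and the $2$-symmetry together with the geometry of the two $A$-triangle regions in Figure \ref{figMiddle} pins down the column, so that the two mirror-symmetric summands differ by exactly the half-rotation $\tau^{(n+3)/2}$.
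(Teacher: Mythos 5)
Your overall strategy coincides with the paper's (the decomposition of Theorem \ref{decompU}, vertical alignment characterized by Theorem \ref{thm18}, rows read off via Theorem \ref{bowPerp}), and your argument for the implication $\tau^{(n+3)/2}U=U\Rightarrow U$ is $2$-symmetric is sound; the remark that $\tau^{(n+3)/2}$ induces a quiver automorphism fixing the central vertex and swapping the two wings is in fact spelled out more explicitly than in the paper. (You should only add, before invoking Theorem \ref{decompU}, that the outer-orbit summands pair up under the fixed-point-free map $\tau^{(n+3)/2}$, so oddness of $n$ forces exactly one summand into the innermost orbit.)

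The genuine gap is in the converse, precisely at the step you yourself flag. Theorem \ref{bowPerp} determines only the row ($\tau$-orbit) of a summand; the column of $U_i$ inside the wing $\Delta_1$ is \emph{not} pinned down by the quiver $Q_1$ through any bookkeeping of subquiver sizes, because a priori two non-isomorphic tilting objects of the $A$-triangle $\Delta_1\simeq\modf kA_{l+1}$ (both containing the apex $\Pi(\Delta_1)=U_{l+1}$) could have isomorphic quivers-with-marked-vertex; since $U_1$ and $U_2$ are chosen independently in the two wings, $U$ could then be $2$-symmetric while $U_2$ fails to be the reflection of $U_1$, and your ``mirrored columns'' assertion would be false. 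In other words, ``the $2$-symmetry pins down the column'' is exactly the content of the theorem in this direction, not a consequence of the preceding steps, and Lemma \ref{pairwiseVertically} cannot supply it: that lemma only counts steps between objects already known to be vertically aligned. What closes the gap is a rigidity input: form $V=U_1\oplus U_{l+1}\oplus U_2'$ with $U_2'$ the literal reflection of $U_1$ into $\Delta_2$; then $V$ is a $2$-cluster tilting object with the same quiver $Q$ as $U$ whose summands are pairwise vertically aligned, hence $\tau^{(n+3)/2}V=V$ by Theorem \ref{thm18}, and Theorem \ref{HermundA} gives $U=\tau^iV$, so the periodicity transfers to $U$. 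This appeal to Theorem \ref{HermundA} (used explicitly by the paper in the analogous $3$-symmetric case, Theorem \ref{3sumImp1}) is the missing ingredient in your sketch.
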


\begin{proof}
First assume that \( U\) is \(2\)-symmetric. We now want to determine the distribution of the indecomposable summands of \( U\) within the AR-quiver of \(\mathcal C_2(A_n)\). There is one summand \(U_{l+1}\) in the innermost \(\tau\)-orbit, for which it is clear that \(\tau^{\frac{n+3}{2}}U_{l+1}=U_{l+1}\).

By  the proof of theorem \ref{decompU} we can write \( U\) on the form \( U= U_1\oplus U_{l+1}\oplus U_2\) where \(U_{i+1}\) lies in the innermost \(\tau\)-orbit. Choose a basic tilting object \(T\) of \(\modf kA_{l+1}\) with the same quiver as \(\End_{\mathcal C}( U_1\oplus U_{l+1})\). Let \(U_{l+1}\) be identified with the projective-injective object of \(\modf kA_{l+1}\), and the other indecomposables of \(T\) with the corresponding vertices in the subcategory of type \(A_{l+1}\) determined by \(U_{l+1}\). By choosing the same tilting object \(T\) again we determine indecomposables in the AR-quiver that correspond to \(U_2\). The indecomposables of \(U_1\) and \(U_2\) are pairwise vertically aligned. By theorem \ref{thm18} we are done. 

Now assume that \(\tau^{\frac{n+3}{2}}U=U\). By theorem \ref{thm18} the indecomposable summands of \(U\) are pairwise vertically aligned. Since \(U\) has an odd number of indecomposable summands, there is one indecomposable summand \(U_{l+1}\) of \(U\) lying in the innermost \(\tau\)-orbit. Hence the quiver \(Q\) of \(\End_{\mathcal C}(U)^{op}\) is of the shape described by figure \ref{QuiverAwmiddle}, and \(U\) is a \(2\)-symmetric \(2\)-cluster tilting object in \(\mathcal C_2(A_n)\). 
\end{proof}

\begin{corollary}
Let $T$ be a \(2\)-symmetric $2$-cluster tilting object of type $A_n$ and $\mathcal{X}$ be the preimage in the bounded derived category. Then  \(\tau^{\frac{n+3}{2}}\mathcal X=\mathcal X\).
\end{corollary}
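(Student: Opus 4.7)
The plan is to chain two facts that have already been assembled in the paper: the immediately preceding theorem characterising $2$-symmetry by periodicity under $\tau^{(n+3)/2}$ in $\mathcal C_2(A_n)$, and Lemma \ref{periodInDerived} which transports such periodicity from the cluster category back to the bounded derived category.

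First I would note that since $T$ is a $2$-symmetric $2$-cluster tilting object of $\mathcal C_2(A_n)$, the preceding theorem gives $\tau^{(n+3)/2}T = T$ inside $\mathcal C_2(A_n)$, hence $\tau^{(n+3)/2}(\add T) = \add T$ as a subcategory. Next, by Corollary \ref{correspondenceDbOgCm}, the preimage $\mathcal X := \pi_2^{-1}(\add T)$ is a $2$-cluster tilting subcategory of $\mathcal D^b(kA_n)$ in bijection with $\add T$.

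Now I would apply Lemma \ref{periodInDerived} with $m=2$, $s=(n+3)/2$ and $t=0$: the functor $F_{\mathcal C} = \tau^{(n+3)/2}$ on $\mathcal C_2(A_n)$ fixes $\add T$, so the corresponding functor $F_{\mathcal D^b} = \tau^{(n+3)/2}$ on $\mathcal D^b(kA_n)$ fixes $\mathcal X$, i.e. $\tau^{(n+3)/2}\mathcal X \subseteq \mathcal X$. The reverse inclusion follows by applying the same lemma to $F_{\mathcal C}^{-1} = \tau^{-(n+3)/2}$, or simply by noting that $\tau^{(n+3)/2}$ is an autoequivalence so the inclusion of an additively closed subcategory into its $\tau^{(n+3)/2}$-image forces equality on a finite-rank additive category. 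This yields $\tau^{(n+3)/2}\mathcal X = \mathcal X$.

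No real obstacle is expected; the only subtlety is making sure one invokes Lemma \ref{periodInDerived} with the correct interpretation of $F_{\mathcal C}$ (here purely a power of the AR-translate, with trivial shift) and that one reads the lemma as asserting $F_{\mathcal D^b}(\mathcal X)\subseteq \mathcal X$ on objects, then upgrades to equality. Everything else is a direct citation of results already proved in Sections \ref{theory} and \ref{sectA}.
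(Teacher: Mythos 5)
Your proposal is correct and follows the same route as the paper, whose proof is simply the citation of Lemma \ref{periodInDerived} (with the preceding theorem supplying $\tau^{(n+3)/2}T=T$ in the cluster category). The extra care you take about upgrading the inclusion to an equality is a reasonable fleshing-out of the same argument, not a different approach.
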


\begin{proof}
Follows from lemma \ref{periodInDerived}.
\end{proof}

\subsection{3-symmetric \(2\)-cluster tilting objects of type A}\label{3sym}

We will now focus on the second case of theorem \ref{valuesOfs}. We give a description of the quivers of the corresponding to the \(2\)-cluster tilting objects \(\mathcal T\) such that \(n=3l\) and \(\tau^{l+1}\mathcal T=\mathcal T\). 

\begin{definition}
Let $T$ be a $2$-cluster tilted object of type $\mathbb{A}_n$ where $n=3l$ and $Q$ is the quiver of the corresponding cluster tilted algebra $\End(T)^{op}$. Then we call $T$ $3$-symmetric if the quiver of $\End(T)^{op}$ has a central 3-cycle with the same subquiver $Q_l$ of type $A_{l}$ attached at each vertex of the 3-cycle (Each subquiver of size $A_l$ contains one of the vertices in the central $3$-cycle). 
\end{definition}

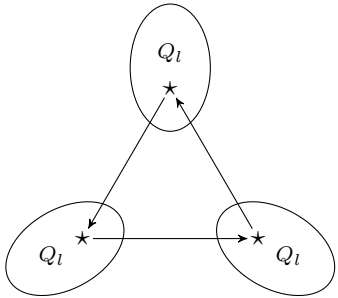
\begin{figure}[b]
\begin{tikzpicture}
\node(P1) at (0,1.3335) [inner sep=1pt] {$\star$}; 
\node(P2) at (-1.155,-0.6665)[inner sep=1 pt] {$\star$};
\node(P3) at (1.155,-0.6665)[inner sep=1 pt]{$\star$};
\draw [->] (P1) -- (P2);
\draw [->](P2)--(P3);
\draw [->](P3)--(P1);
\draw [rotate=30](0,0) (-1.6,0) ellipse (24pt and 15 pt);
\node (S1) at (30: -1.8cm)[tvertex]{$Q_l$};
\draw [rotate=150](0,0) (-1.6,0) ellipse (24pt and 15 pt);
\node (S_2) at (150:-1.8 cm)[tvertex]{$Q_l$};
\draw [rotate=270](0,0) (-1.6,0) ellipse (24pt and 15 pt);
\node(S_3) at (90: 1.8 cm)[tvertex]{$Q_l$};
\end{tikzpicture}\caption{The general shape of the quiver of a \(3\)-symmetric \(2\)-cluster tilting object of type \(A_{3l}\).}\label{3symDef}
\end{figure}

\begin{theorem}\label{3sumImp1}
Let \(U\) be a \(3\)-symmetric \(2\)-cluster tilting object of \(\mathcal C_2(A_n)\) where \(n=3l\). Then \(\tau^{l+1}U=U\).
\end{theorem}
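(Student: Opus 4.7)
Since $n + 3 = 3(l+1)$, the autoequivalence $\tau^{l+1}$ has order exactly $3$ on every $\tau$-orbit of full length $n+3$ in $\mathcal{C}_2(A_n)$. I would prove the theorem by exhibiting \emph{one} $3$-symmetric $2$-cluster tilting object $U^{*}$ with cluster-tilted quiver $Q$ satisfying $\tau^{l+1} U^{*} = U^{*}$, and then transferring the periodicity to all other $3$-symmetric objects with quiver $Q$ via Theorem \ref{HermundA}. Once $U^{*}$ is in hand, for any other $3$-symmetric $U$ with cluster-tilted quiver isomorphic to $Q$, we get $U = \tau^i U^{*}$ for some $i \in \mathbb{Z}$, and therefore
\[
\tau^{l+1}U \;=\; \tau^{l+1+i}U^{*} \;=\; \tau^{i}(\tau^{l+1}U^{*}) \;=\; \tau^{i}U^{*} \;=\; U,
\]
which is exactly the statement of the theorem.

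\textbf{Construction of $U^{*}$.} The branch $Q_l$ of $Q$ is the quiver of a cluster-tilted algebra of type $A_l$, so by the Vatne-type classification it is realized by a tilting module $T$ for $kA_l$ in linear orientation, yielding a collection of $l$ indecomposables sitting inside an $A$-triangle $\Delta$ of size $l$ in $\mathcal{C}_2(A_n)$, in such a way that the projective-injective $\Pi(\Delta)$ is a summand of $T$ and plays the role of the central vertex of $Q_l$. Placing $\Delta$ so that $\Pi(\Delta)$ lies in the $l$-th $\tau$-orbit from the outside, which is where Theorem \ref{bowPerp} forces the central summands to live, I would define
\[
U^{*} \;:=\; T \;\oplus\; \tau^{l+1}T \;\oplus\; \tau^{2(l+1)}T.
\]
This has $3l = n$ indecomposable summands and by construction $\tau^{l+1}U^{*} = U^{*}$.

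\textbf{Checking $U^{*}$ is $2$-cluster tilting with quiver $Q$.} The required $\operatorname{Ext}$-vanishing inside each shifted copy $\tau^{j(l+1)}T$ is inherited from $T$ being a tilting module. Between different shifted copies, Lemma \ref{MorpAtriangle} implies that any morphism must factor through a shifted projective-injective $\tau^{j(l+1)}\Pi(\Delta)$; a direct inspection of the AR-quiver then shows that the three objects $\Pi(\Delta), \tau^{l+1}\Pi(\Delta), \tau^{2(l+1)}\Pi(\Delta)$ are pairwise $\operatorname{Ext}^1$-orthogonal and that the only non-zero morphisms among them assemble into a single oriented $3$-cycle, yielding precisely the central $3$-cycle required by the definition of $3$-symmetric. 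Each translate $\tau^{j(l+1)}T$ contributes a copy of $Q_l$ attached to its central $\tau^{j(l+1)}\Pi(\Delta)$, completing the identification of the cluster-tilted quiver of $U^{*}$ with $Q$. Functorial finiteness is automatic since $\mathcal{C}_2(A_n)$ is locally finite.

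\textbf{Main obstacle.} The substantive work lies in the preceding verification: showing that the three $\tau^{l+1}$-shifted $A$-triangles sit in $\mathcal{C}_2(A_n)$ in such a way that their projective-injectives are mutually $\operatorname{Ext}$-orthogonal, and that the morphisms among them assemble into a single $3$-cycle rather than a longer path, three disjoint arrows, or an unwanted double arrow. Geometrically this is transparent in the polygon model of $\mathcal{C}_2(A_n)$, where $U^{*}$ corresponds to a triangulation of the $(n+3)$-gon with a central triangle and three identical sub-triangulations rotated by $l+1$ vertices, but extracting it purely from the AR-quiver combinatorics, Lemma \ref{MorpAtriangle}, and Theorem \ref{bowPerp} is the technical core of the proof.
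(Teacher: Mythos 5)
Your proposal is correct and takes essentially the same route as the paper: the paper likewise builds a single $\tau^{l+1}$-periodic representative $V=V_{1\ldots l}\oplus\tau^{l+1}V_{1\ldots l}\oplus\tau^{2(l+1)}V_{1\ldots l}$ from a tilting object with quiver $Q_l$ placed in an $A$-triangle whose projective-injective lies in the $l$-th outermost $\tau$-orbit, checks the Ext-orthogonality between the three shifted triangles by inspection of the AR-quiver (figure \ref{figure.oppe_og_nede}), and then transfers periodicity to $U$ via theorem \ref{HermundA} exactly as you do.
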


\begin{proof}
Let \(U\) be a \(3\)-symmetric \(2\)-cluster tilting object of \(\mathcal C_2(A_n)\) where \(n=3l\). Denote by \(Q\) the quiver of \(\End_{\mathcal C}(U)^{op}\) and by \(Q_l\) the subquivers of \(Q\) as indicated in figure \ref{3symDef}.
We will first define a set of indecomposable objects  \(V\) of \(\mathcal C_2(A_n)\). Then we will show that \(V\) is a \(2\)-cluster tilting object with quiver \(Q\) (the same quiver as \(U\)), and \(\tau^{l+1}V=V\). By Theorem \ref{HermundA} this completes the proof.

We now describe the set \(V\) of \(n\) indecomposable objects in \(\mathcal C_2(A_n)\). Start by choosing a random object in the \(l\) outermost \(\tau\)-orbit, and name this object \(V_l\). Then choose \(V_{2l}:=\tau^{l+1}V_l\) and \(V_{3l}:=\tau^{l+1}V_{2l}=\tau^{2l+2}V_l\). The objects \(V_l,V_{2l}\) and \(V_{3l}\) determine 3 Abelian subcategories  of type \(A_l\)(where \(A_l\) has linear orientation) in \(\mathcal C_2(A_{3l})\). We denote the respective subcategories by \(\Delta_1,\Delta_2\) and \(\Delta_3\). An illustration of this situation is given in figure \ref{figure.oppe_og_nede}.

The projective-injective object \(\Pi(\Delta)\) of \(\Delta_i\) correspond to the object \(V_{il}\). In \(\Delta_1\) we choose a tilting object \(V_{1\ldots l}\) with quiver \(Q_l\). Note that \(V_l\) will be a summand of \(V_{1\ldots l}\) as it corresponds to the projective injective object. Then we choose the corresponding tilting objects \(V_{l+1\ldots 2l}\) in \(\Delta_2\) and \(V_{2l+1\ldots 3l}\) in \(\Delta_3\), containing respectively \(V_{2l}\) and \(V_{3l}\). We then define \(V:=V_{1\ldots l}\oplus V_{l+1\ldots 2l}\oplus V_{2l+1\ldots 3l}\).

From figure \ref{figure.oppe_og_nede} it is clear that the objects \(V_l, V_{2l}\) and \(V_{3l}\) are chosen in such a way that they are compatible in the same \(2\)-cluster tilting object. There are no extensions between any objects in \(\Delta_1\) and \(\Delta_2\), \(\Delta_2\) and \(\Delta_3\) and \(\Delta_3\) and \(\Delta_1\) due to the choice of \(V_l,V_{2l}\) and \(V_{3l}\). Furthermore, there are no extensions between any indecomposable summands within \(\Delta_1, \Delta_2\) or \(\Delta_3\) since \(V_{1\ldots l}, V_{l+1\ldots 2l}\) and \(V_{2l+1\ldots 3l}\) are tilting objects in respective subcategories. Since \(V\) has exactly \(n\) indecomposable summands, we conclude that it is indeed a \(2\)-cluster tilting object of \(\mathcal C_2(A_n)\). 

From the method used to choose \(V_{1\ldots l}, V_{l+1\ldots 2l}\) and \(V_{2l+1\ldots 3l}\) we have that \(\tau^{l+1} V= V\). Hence the theorem is proved.  
\end{proof}


\begin{minipage}{\textwidth}
\[ \scalebox{1} { \begin{tikzpicture}[scale=.5,yscale=-1]
 \fill [fill1] (0,2)-- (1.5,.5) --(2.5,.5) -- (7,5) -- (11.5,.5) -- (12.5,.5)-- (15,3)--(15,11)-- (7,3)--(0,10)--cycle; 
\fill [fill2] (0,10)--(2,8)-- (6.5,12.5)-- (7.5,12.5)--(15,5)--(15,3)--(12.5,.5)--(11.5,.5)--(2,10)--(0,8)--cycle;
\fill [fill12]  (15,7)--(12,10)--(2.5,.5)--(1.5,.5)--(0,2)--(0,6)--(6.5,12.5)--(7.5,12.5)--(12,8)--(15,11)--cycle;
 \foreach \x in {0,...,7}
  \foreach \y in {1,3,5,7,9,11}
   \node (\y-\x) at (\x*2,\y) [vertex] {};
 \foreach \x in {0,...,7}
  \foreach \y in {2,4,6,8,10,12}
   \node (\y-\x) at (\x*2+1,\y) [vertex] {};
 \replacevertex[fill2]{(9-1)}{[tvertex] {$V_{2l}$}}
 \replacevertex[fill12]{(9-6)}{[tvertex]{$V_{1l}$}}
 \replacevertex[fill1]{(4-3)}{[tvertex]{$V_{3l}$}}
 
 \foreach \xa/\xb in {0/1,1/2,2/3,3/4,4/5,5/6,6/7}
  \foreach \ya/\yb in {1/2,3/4,5/4,7/6,9/8,9/10,11/12}
   {
    \draw [->] (\ya-\xa) -- (\yb-\xa);
    \draw [->] (\yb-\xa) -- (\ya-\xb);
   } 
 \foreach \ya/\yb in {1/2,3/4,5/4,7/6,9/8,9/10,11/12}
 	{ \draw[->](\ya-7)--(\yb-7);}
 \foreach \ya/\yb in {3/2,5/6,7/8,11/10}
 	{\draw[thick,loosely dotted] (\ya-7)--(\yb-7);} 	
 \foreach \xa/\xb in {0/1,1/2,2/3,3/4,4/5,5/6,6/7}
  \foreach \ya/\yb in {3/2,5/6,7/8,11/10}
   {
    \draw [thick,loosely dotted] (\ya-\xa) -- (\yb-\xa);
    \draw [thick,loosely dotted] (\yb-\xa) -- (\ya-\xb);
   }
 \draw [dashed] (0,.5) -- (0,12.5); 
 \draw [dashed] (15,.5) -- (15,12.5);
 \draw [decorate, decoration={brace}](-0.25,12.5) -- node [left]{$l$}(-0.25,8.5);
 \draw [decorate, decoration={brace}](-0.25,4.5) -- node [left]{$l$}(-0.25,0.5);
 \draw[rounded corners=8pt] (6,0.5)--node[anchor=south]{$\Delta_3$}(8,0.5)--(11.25,0.5)--(7,4.75)--(2.75,0.5)--(6,0.5); 
\end{tikzpicture} } \]

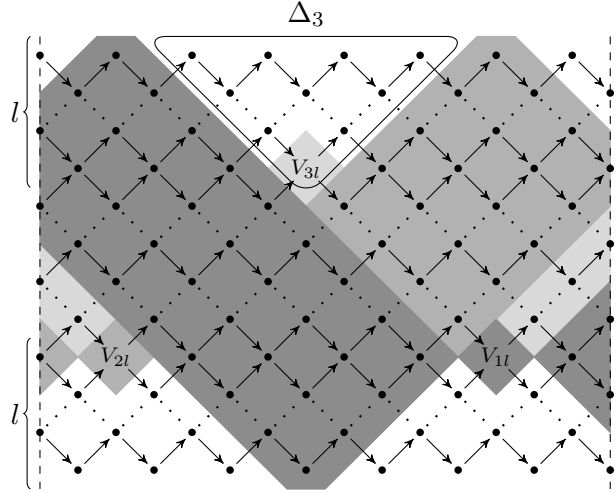
\captionof{figure}{The indecomposables on a white background may be part of the same \(2\)-cluster tilting object as \(V_{l}, V_{2l}\) and \(V_{3l}\). The indecomposables with the same background color as \(V_{3l}\) or darker may not be part of a \(2\)-cluster tilting object together with \(V_{3l}\), and the same is also true for \(V_{2l}\) and \(V_l\).} \label{figure.oppe_og_nede}
\end{minipage}

\begin{lemma}\label{middleEmpty}
Let \(n=3l\) and \(U\) be a \(2\)-cluster tilting object of \(\mathcal C_2(A_n)\) such that \(\tau^{l+1}U=U\). Then all indecomposable summands of \(U\) lie in the \(l\) outermost \(\tau\)-orbits of the AR-quiver of \(C_2(A_n)\). 
\end{lemma}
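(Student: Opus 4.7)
\emph{Plan.} I will work in the standard geometric model of $\mathcal C_2(A_n)$ in which indecomposable objects correspond to diagonals of a regular $(n+3)$-gon, $\tau$ acts as rotation by one vertex, $\tau$-orbits are parametrised by the diagonal length $d \in \{2, 3, \ldots, \lfloor (n+3)/2 \rfloor\}$, and $\Ext^1(M,N) \neq 0$ precisely when the two corresponding diagonals cross. Via Theorem~\ref{bowPerp} the orbit of length $d$ is the $(d-1)$-th from the outside (deleting the corresponding diagonal splits the polygon into a sub-polygon with $d+1$ vertices and one with $n+4-d$ vertices, giving $|Q_1|=d-1$). Consequently the $l$ outermost $\tau$-orbits correspond to $d \in \{2, 3, \ldots, l+1\}$, so the statement to prove reduces to: no summand of $U$ is a diagonal of length $d \geq l+2$.

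The argument is by contradiction. Suppose some summand $T_i$ of $U$ has underlying diagonal of length $d \geq l+2$, say $T_i = (x,\, x+d)$, with arithmetic mod $n+3 = 3(l+1)$. Since $\tau^{l+1}U = U$, the rotated diagonal $\tau^{l+1}T_i = (x+l+1,\, x+d+l+1)$ is also a summand. The key step is the direct verification that $T_i$ and $\tau^{l+1}T_i$ cross. Since $1 \leq l+1 \leq d-1$, the vertex $x+l+1$ lies strictly between $x$ and $x+d$ on the polygon. On the other hand the bound $d \leq \lfloor 3(l+1)/2 \rfloor \leq 2l+1$ (immediate for $l \geq 1$) gives $x+d < x+d+l+1 < x+3(l+1)$, so $x+d+l+1$ lies strictly in the complementary arc. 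The two diagonals therefore cross, forcing $\Ext^1_{\mathcal C_2(A_n)}(T_i, \tau^{l+1}T_i) \neq 0$ and contradicting that $U$ is a $2$-cluster tilting object.

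A small verification is required when $l$ is odd and $T_i$ is a diameter (length $d = 3(l+1)/2$), since then the corresponding $\tau$-orbit has only $3(l+1)/2$ distinct objects rather than $3(l+1)$. One checks that $\tau^{l+1}T_i \neq T_i$ (because $2(l+1)$ is not divisible by $3(l+1)$) and that any two distinct diameters cross at the centre of the polygon; both checks are immediate, and the inequality argument above continues to deliver the crossing in this case. Apart from this minor bookkeeping, the only real work is the crossing computation itself; no further machinery beyond Theorem~\ref{bowPerp} and the polygon model is needed. I expect the main potential obstacle to be ensuring that every permissible value of $d$ (including the boundary $d = l+2$ and the diameter case) is genuinely covered by the single inequality chain $l+2 \leq d \leq 2l+1$.
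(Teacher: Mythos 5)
Your proof is correct and is essentially the paper's own argument: the paper likewise derives the contradiction from the fact that a summand lying deeper than the $l$-th $\tau$-orbit contains its own $\tau^{\pm(l+1)}$-translate in its Ext-support (established there by a picture of the Ext-support, which your polygon-model crossing computation simply makes explicit, including the diameter case). One cosmetic slip: deleting a diagonal of length $d$ leaves $d-2$ (not $d-1$) diagonals inside the $(d+1)$-gon, so $|Q_1|=d-2$ and Theorem~\ref{bowPerp} then yields orbit number $d-1$ --- which is the correspondence you actually use, so nothing downstream is affected.
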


\begin{proof}
Assume that \(U_i\) is an indecomposable summand of \(U\), lying in \(\tau-orbit\) nr \(i\). Then for \(U_i\) we have that \(\Ext_\mathcal C(\tau^{-j}U_i,U_i)\neq0\) for \(j=0,\ldots, i\). If \(i>l\) this makes it impossible to find any other indecomposable summand \(\overline U_i\) of \(U\) such that \(\tau^{-l}U_i=\overline{U_i}\) . This is illustrated in figure \ref{ExtInL}.
\end{proof}

\begin{minipage}{\textwidth}
\[ \scalebox{.88} { \begin{tikzpicture}[scale=.5,yscale=-1]
	\fill [fill0] (2,4)--(3,3)--(2,2)--(1,3)--(2,4);
	\fill [fill1] (0,2)--(1,3)--(0,4)--(0,2);
	\fill [fill1] (3,3)--(5.5,0.5)--(6.5,.5)--(12,6)--(12,8)--(10.5,9.5)--(9.5,9.5)--(3,3);
 \foreach \x in {0,...,6}
  \foreach \y in {1,3,5,7,9}
   \node (\y-\x) at (\x*2,\y) [vertex] {};
 \foreach \x in {0,...,5}
  \foreach \y in {2,4,6,8}
   \node (\y-\x) at (\x*2+1,\y) [vertex] {};
 \replacevertex[fill0]{(3-1)}{[tvertex] {$U_{i}$}}
 \foreach \xa/\xb in {0/1,1/2,2/3,3/4,4/5,5/6}
  \foreach \ya/\yb in {1/2,3/4,5/4,5/6,7/6,9/8}
   {
    \draw [->] (\ya-\xa) -- (\yb-\xa);
    \draw [->] (\yb-\xa) -- (\ya-\xb);
   }
 \foreach \xa/\xb in {0/1,1/2,2/3,3/4,4/5,5/6}
  \foreach \ya/\yb in {3/2,7/8}
   {
    \draw [thick,loosely dotted] (\ya-\xa) -- (\yb-\xa);
    \draw [thick,loosely dotted] (\yb-\xa) -- (\ya-\xb);
   }
 \draw [dashed] (0,.5) -- (0,9); 
 \draw [dashed] (12,.5) -- (12,9);
\end{tikzpicture} } \]

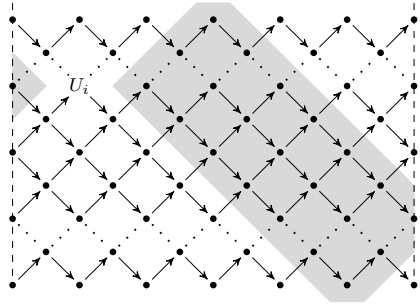
\captionof{figure}{The Ext-support of an indecomposable object in \(\mathcal C_2(A_n)\).} \label{ExtInL}
\end{minipage}

Note that all \(2\)-cluster tilting objects of type \(A_{3l}\) that do not contain a \(3\)-cycle contain indecomposable objects that do not lie in the \(l\)-outermost \(\tau\)-orbits. 

We now go on to show the last implication for the \(3\)-symmetric case, namely that if one has \(\tau^{l+1}T=T\) for a \(2\)-cluster tilting object \(T\) of \(\mathcal C_2(A_{3l})\), then \(T\) is \(3\)-symmetric. As remarked before theorem \ref{bowPerp}, there is a close relation between the placement of a vertex in the quiver a cluster tilted algebra, and the placement of the corresponding indecomposable vertex in the AR-quiver of the cluster category. Before proceeding with this last proof, we need to name a special type of vertices in the quiver, namely the maximal dividing vertices. Basically a maximal dividing vertex that divides the quiver in two subquivers of as equal size as possible. These will be the vertices that correspond to the indecomposables \(V_l\), \(V_{2l}\) and \(V_{3l}\) as in the proof of theorem \ref{3sumImp1} and in figure \ref{figure.oppe_og_nede}.

\begin{definition}
Let \(Q\) be the quiver of a \(2\)-cluster tilting object of type \(A_n\). We define a vertex \(M\) of \(Q\) to be a maximal dividing vertex if
\begin{itemize}
\item deleting \(M\) from \(Q\) yields to subquivers \(Q^M_1\) and \(Q^M_2\) that are each connected but not interconnected.
\item \(||Q^M_{1}|-|Q^M_{2}||=\min\left\{||Q^i_1|-|Q_2^i||\left|\right.i=1,\ldots,n\right\}\)
\end{itemize}
\end{definition}

\begin{theorem}\label{SecondImpl}
Let \(U\) be a \(2\)-cluster tilting object of \(\mathcal C_2(A_n)\) where \(n=3l\). If \(\tau^{l+1} U=U\) then \(U\) is \(3\)-symmetric.
\end{theorem}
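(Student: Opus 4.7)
The strategy is to locate three distinguished summands of $U$ in the deepest possible $\tau$-orbit, then use these to realize the central $3$-cycle of the quiver $Q=\End_{\mathcal{C}}(U)^{op}$, with the rest of $U$ organizing into three identical branches. By Lemma \ref{middleEmpty}, every indecomposable summand of $U$ lies in one of the $l$ outermost $\tau$-orbits of $\mathcal{C}_2(A_n)$, each of which has $3l+3$ objects. On each such orbit, $\tau^{l+1}$ acts as a fixed-point-free rotation of order three, so the hypothesis $\tau^{l+1}U=U$ partitions the summands of $U$ into $\tau^{l+1}$-orbits of size exactly three. This induces an order-$3$ automorphism $\sigma$ of $Q$ without fixed vertex, permuting the $3l$ vertices in $l$ orbits of size $3$.

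The key claim is that at least one $\sigma$-orbit of summands lies in the $l$-th outermost $\tau$-orbit of $\mathcal{C}_2(A_n)$. I would establish this by combining the description of quivers of type-$A_n$ cluster tilted algebras from Section \ref{quivers} with a parity argument: the combinatorics of a $2$-cluster tilting object in $\mathcal{C}_2(A_n)$ effectively corresponds to a triangulation of a $(3l+3)$-gon, which has $3l+1\not\equiv 0\pmod 3$ triangles. Under the $\sigma$-action at least one oriented $3$-cycle of $Q$ must therefore be $\sigma$-invariant; by Theorem \ref{bowPerp}, its three vertices correspond to summands lying in the $l$-th outermost $\tau$-orbit (since removing such a vertex yields subquivers of sizes $l-1$ and $2l$).

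Once three summands $V_l$, $V_{2l}:=\tau^{l+1}V_l$ and $V_{3l}:=\tau^{2l+2}V_l$ in the $l$-th outermost orbit are produced, the construction used in the proof of Theorem \ref{3sumImp1} yields three disjoint $A$-triangles $\Delta_1,\Delta_2,\Delta_3$ of size $l$ with $\Pi(\Delta_i)=V_{il}$, whose union covers exactly the $l$ outermost orbits. The remaining summands of $U$ must distribute among the $\Delta_i$, each contributing a $2$-cluster tilting object of the embedded subcategory equivalent to $\modf kA_l$. Since $\tau^{l+1}$ maps $\Delta_i$ to $\Delta_{i+1}$ and carries summands over by $\tau^{l+1}$-periodicity, the three induced subquivers of $Q$ are isomorphic to a single quiver $Q_l$. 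Finally, Lemma \ref{MorpAtriangle} forces all morphisms between distinct $\Delta_i$ to factor through the projective-injectives $V_{il}$, producing a $3$-cycle on the vertices $V_l,V_{2l},V_{3l}$ with a copy of $Q_l$ attached at each; this is exactly the $3$-symmetric structure of Figure \ref{3symDef}.

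The main obstacle is the central claim, i.e.\ that $\sigma$ fixes a $3$-cycle of $Q$, equivalently that $U$ has a summand in the $l$-th outermost $\tau$-orbit. Ruling out the possibility that all summands sit in strictly outer orbits requires combining the $\tau^{l+1}$-periodicity with the rigid combinatorics of cluster tilted quivers of type $A_{3l}$; the cleanest argument goes through the polygon/triangulation model and the fact that a $120^\circ$-rotation invariant triangulation of the $(3l+3)$-gon must contain a rotation-invariant central triangle, but translating this directly into the algebraic language the paper uses is the delicate point.
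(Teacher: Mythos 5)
Your argument is correct in substance and reaches the same pivotal intermediate fact as the paper --- that $U$ must have a summand in the $l$-th outermost $\tau$-orbit, after which $\tau^{l+1}$-periodicity supplies the other two and the three $A$-triangles force the $3$-symmetric shape --- but you get there by a genuinely different route. The paper stays entirely inside the quiver $Q$ of $\End_{\mathcal C}(U)^{op}$: it takes a \emph{maximal dividing vertex} $i$, rules out (via Theorem \ref{bowPerp} and Lemma \ref{middleEmpty}) the case where $i$ meets its larger complementary subquiver through a single arrow, and then from the resulting $3$-cycle at $i$ extracts the inequality $n^i_1+n^{2i}_2+n^{3i}_3+3=n$ together with $n^i_1\geq\max\{n^{2i}_2,n^{3i}_3\}$, giving $3n^i_1+3\geq n$ and hence $n^i_1\geq l-1$, which places $U_i$ in the $l$-th orbit. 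You instead pass to the triangulation model of the $(3l+3)$-gon and note that a $120^{\circ}$-invariant triangulation, having $3l+1\equiv 1\pmod 3$ triangles, must contain a rotation-fixed triangle; its vertices are spaced $l+1$ apart, so (for $l\geq 1$) all three sides are genuine diagonals cutting off $l$ vertices, i.e.\ a $\sigma$-invariant $3$-cycle of $Q$ whose summands sit in the $l$-th orbit. Your counting argument is shorter and more conceptual than the maximal-dividing-vertex case analysis, but, as you acknowledge, it imports the polygon model, which the paper never sets up; to make it self-contained you would need to invoke the correspondence of \cite{ccs} and verify that $\tau$ is rotation by one step, so that $\tau^{l+1}$ is the $120^{\circ}$ rotation. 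Two small repairs for the concluding step: Lemma \ref{MorpAtriangle} is stated only for type $D$, so you should instead argue as in the proof of Theorem \ref{3sumImp1} (via the Ext-supports of Figure \ref{figure.oppe_og_nede}); and the union of $\Delta_1,\Delta_2,\Delta_3$ does not cover the $l$ outermost orbits --- the correct assertion is that every summand of $U$ other than $V_l,V_{2l},V_{3l}$ is forced to lie in one of them.
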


\begin{proof}
Let \(Q\) be the quiver of \(\End_{\mathcal C}(U)^{op}\). We aim to show that \(U\) must have at least one indecomposable in the \(l\) outermost \(\tau\)-orbit.

Let \(i\) be a maximal dividing vertex of \(Q\), dividing the quiver into two quivers \(Q^i_1\) and \(Q^i_2\) with respectively \(n^i_1\) and \(n^i_2\) vertices. Assume that \(n^i_2\geq n^i_1\).  

We start by studying how \(Q^i_2\) is connected to the vertex \(i\) in \(Q\). There are two possibilities, either they are connected by a single arrow, or there is a \(3\)-cycle in \(Q\) formed by the vertex \(i\) and two vertices of \(Q^i_2\). 

First assume that \(i\) and \(Q^i_2\) are connected by a single arrow in \(Q\). Denote by \(i+1\) the vertex in \(Q^i_2\) that is a neighbour of \(i\) in \(Q\). Consider what happens if we instead divide the quiver \(Q\) at the vertex \(i+1\), this would give two subquivers of size \(n^i_2-1\) and \(n^i_1+1\). Assume that vertex \(i+1\) is also a maximal dividing vertex, and hence we obtain the same difference between subquivers by dividing at this vertex. Then \(n^i_2-n^i_1=|(n^i_2-1)-(n^i_1+1)|\), thus \(n^i_2=n^i_1+1\). Given any \(2\)-cluster tilting object \(T\) in \(\mathcal C_2(A_{3l})\) with quiver \(Q\), then by theorem \ref{bowPerp} the indecomposable object \(T_i\) corresponding to vertex \(i\) lies in the innermost \(\tau\)-orbit of the AR-quiver, contradicting theorem \ref{middleEmpty}. Let us therefore assume that the vertex \(i+1\) is not a maximal dividing vertex, giving rise to subquivers of size \(n^i_2-1\) and \(n^i_1+1\). Since \(i+1\) is not a maximal dividing vertex we must have the inequality \(n^i_2-n^i_1<|(n_2^i-1)-(n_1^i+1)|\), and so \(n_2^i=n_1^i\). This implies that any indecomposable object \(T_i\) corresponding to the vertex \(i\) in a \(2\)-cluster tilting object \(T\) with quiver \(Q\) must lie in the innermost \(\tau\)-orbit in the AR-quiver by theorem \ref{bowPerp}. This contradicts lemma \ref{middleEmpty}.

From the above considerations, it is clear that there is a \(3\)-cycle in \(Q\) formed by the vertex \(i\) and two vertices of \(Q_2^i\). Denote the other two vertices in the \(3\)-cycle by \(2i\) and \(3i\). Dividing at vertex \(2i\) gives rise to a subquiver \(Q_{2i}\) of size \(n^{2i}_2\) not containing \(i\) and \(3i\), and similarly dividing at vertex \(3i\) gives rise to a subquiver \(Q_{3i}\) of size \(n^{3i}_3\) not containing \(i\) or \(2i\). Hence \(n^i_1\geq\max\{n^{2i}_2,n^{3i}_3\}\) since \(i\) is a maximal dividing vertex. The situation at this point is illustrated in figure \ref{bevisIll}. Recall that the vertices \(i,2i\) and \(3i\) are not counted as vertices in the quivers \(Q^i_2,Q_{2i}\) and \(Q_{3i}\). Hence we have \(n^{i}_1+n^{2i}_2+n^{3i}_3+3=n\). Combined with the last inequality we then have \(3n^i_1+3\geq n\) so \(n^i_1\geq l-1\) (recall that n=3l). By theorem \ref{bowPerp} and lemma \ref{middleEmpty} this means that any indecomposable object \(U_i\) of \(U\) corresponding to vertex \(i\) must lie in the \(l^{th}\) outermost \(\tau\)-orbit of the AR-quiver of \(\mathcal C_2(A_{3l})\). However, by the assumption that \(\tau^{l+1}U=U\), there are at least two more indecomposable objects of \(U\) lying in this \(\tau\)-orbit, namely \(\tau^{l+1}U_i\) and \(\tau^{2l+2}U_i\). By the proof of lemma \ref{middleEmpty} this means that \(U\) is \(3\)-symmetric.  
\end{proof}

\begin{figure}
\begin{tikzpicture}
\node(P1) at (0,1.3335) [inner sep=1pt] {$\star$}; 
\node(P2) at (-1.155,-0.6665)[inner sep=1 pt] {$\star$};
\node(P3) at (1.155,-0.6665)[inner sep=1 pt]{$\star$};
\draw [->] (P1) -- (P2);
\draw [->](P2)--(P3);
\draw [->](P3)--(P1);
\draw [rotate=30](0,0) (-1.6,0) ellipse (24pt and 15 pt);
\node (S1) at (30: -1.8cm)[tvertex]{$Q_1$};
\draw [rotate=150](0,0) (-1.6,0) ellipse (24pt and 15 pt);
\node (S_2) at (150:-1.8 cm)[tvertex]{$Q_2$};
\draw [rotate=270](0,0) (-1.6,0) ellipse (24pt and 15 pt);
\node(S_3) at (90: 1.8 cm)[tvertex]{$Q_3$};
\end{tikzpicture}\caption{Illustration of proof of Theorem \ref{SecondImpl}}\label{bevisIll}
\end{figure}
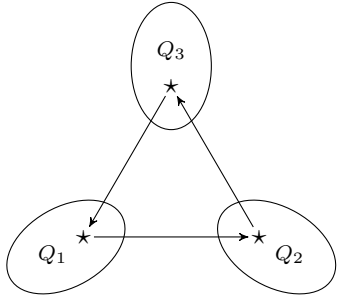

\begin{theorem}\label{type2A}
Let $T$ be a \(3\)-symmetric $2$-cluster tilting object of type $A_n$ and $\mathcal{X}$ be the preimage in the bounded derived category. Then \(\tau^{l+1}\mathcal X=\mathcal X\).
\end{theorem}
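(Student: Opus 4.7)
The plan is to deduce this statement as an immediate consequence of two earlier results, in exact parallel with the corresponding corollary in the 2-symmetric case. The whole point of Section \ref{theory}, and in particular Lemma \ref{periodInDerived}, is that once periodicity under a functor of the form $\tau^s[t]$ has been established in the cluster category $\mathcal{C}_2(A_n)$, it lifts automatically to periodicity of the preimage in $\mathcal{D}^b(A_n)$. So the proof reduces to assembling the ingredients already proved.

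Concretely, I would proceed as follows. First, since $T$ is a $3$-symmetric $2$-cluster tilting object of $\mathcal{C}_2(A_n)$ with $n=3l$, Theorem \ref{3sumImp1} gives $\tau^{l+1}T = T$ in $\mathcal{C}_2(A_n)$. Equivalently, the $2$-cluster tilting subcategory $\add T = \pi_2(\mathcal{X})$ is periodic under the autoequivalence $\tau^{l+1}$ of $\mathcal{C}_2(A_n)$. Second, apply Lemma \ref{periodInDerived} with $s = l+1$ and $t = 0$: since $\pi_2(\mathcal{X})$ is a $\tau^{l+1}$-periodic $2$-cluster tilting subcategory of $\mathcal{C}_2(A_n)$ whose preimage in $\mathcal{D}^b(A_n)$ is precisely $\mathcal{X}$, the lemma yields $\tau^{l+1}\mathcal{X} = \mathcal{X}$ in $\mathcal{D}^b(A_n)$, which is the desired conclusion.

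There is no real obstacle here, since the genuine content lies earlier: the combinatorial identification of $3$-symmetry with $\tau^{l+1}$-periodicity in the cluster category (Theorems \ref{3sumImp1} and \ref{SecondImpl}) and the general lifting principle (Lemma \ref{periodInDerived}). The only thing to be careful about is that Lemma \ref{periodInDerived} is formulated for functors of the shape $\tau^s[t]$ acting compatibly on $\mathcal{D}^b(H)$ and $\mathcal{C}_m(H)$, and since $\tau^{l+1}$ is exactly such a functor (with $t=0$), its hypotheses are satisfied verbatim. Hence the proof can be written in a single sentence, just as in the $2$-symmetric corollary.
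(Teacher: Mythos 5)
Your proposal is correct and is exactly the paper's argument: the paper's proof of this theorem is the single line ``Follows from lemma \ref{periodInDerived}'', relying implicitly on Theorem \ref{3sumImp1} for the periodicity $\tau^{l+1}T=T$ in $\mathcal C_2(A_n)$, just as you spell out. Your version is simply a more explicit write-up of the same deduction.
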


\begin{proof}
Follows from lemma \ref{periodInDerived}.
\end{proof}

\subsection{Summary of results type A}
We give a brief recount of the results obtained for type A.

\begin{theorem}\label{sumA}
Let \(T\) be a \(2\)-cluster tilting object of \(\mathcal C_2(A_n)\) such that \(\tau^sT=T\) for some \(0<s<n+3\). Then 
\begin{itemize}
\item \(s=\frac{n+3}{2}\) for odd \(n\) or
\item \(s=\frac{n+3}{3}\) for \(n\) divisible by \(3\).
\end{itemize}
 Furthermore
\begin{itemize}
\item \(s=\frac{n+3}{2}\) for odd \(n\) if and only if \(T\) is \(2\)-symmetric and 
\item  \(s=\frac{n+3}{3}\) for \(n\) divisible by \(3\) if and only if \(T\) is \(3\)-symmtric. 
\end{itemize}
\end{theorem}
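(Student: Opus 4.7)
The statement is a summary theorem collating work already done in Subsections \ref{2sym} and \ref{3sym}, so the plan is simply to reassemble the relevant earlier results rather than to prove anything new. The first half---the restriction of possible periods to $s=(n+3)/2$ (for odd $n$) or $s=(n+3)/3$ (for $n$ divisible by $3$)---is precisely the content of Theorem \ref{valuesOfs} and therefore requires no further argument.

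For the ``furthermore'' part I would invoke the two equivalences established in the preceding subsections. The odd-$n$ case follows immediately from the characterisation theorem at the end of Subsection \ref{2sym}, which asserts that a basic $2$-cluster tilting object $U$ of $\mathcal C_2(A_n)$ with $n$ odd satisfies $\tau^{(n+3)/2} U = U$ if and only if $U$ is $2$-symmetric. The case $n=3l$ is the conjunction of Theorem \ref{3sumImp1} (the direction ``$3$-symmetric $\Rightarrow \tau^{l+1} T = T$'') and Theorem \ref{SecondImpl} (the converse). Chaining these biconditionals with Theorem \ref{valuesOfs} yields exactly the two equivalences stated.

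There is no genuine obstacle here, since the combinatorial work has already been carried out in the previous subsections. The only point worth flagging for the reader is the overlap when $n$ is simultaneously odd and divisible by $3$ (the first instance being $n=9$, then $n=15,21,\ldots$): in that situation both clauses of the classification can apply to different $T$, and the two characterisations are non-contradictory because they describe distinct symmetric properties of the cluster-tilted quiver. Thus the proof I envisage is essentially a one-line citation: the result follows directly from Theorem \ref{valuesOfs} together with the characterisation theorem of Subsection \ref{2sym} and Theorems \ref{3sumImp1} and \ref{SecondImpl}.
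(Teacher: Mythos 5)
Your proposal is correct and matches the paper exactly: the paper presents Theorem \ref{sumA} as a summary with no separate proof, relying precisely on Theorem \ref{valuesOfs}, the unnamed characterisation theorem closing Subsection \ref{2sym}, and Theorems \ref{3sumImp1} and \ref{SecondImpl}. Your remark about the non-contradictory overlap when $n$ is both odd and divisible by $3$ is a sensible addition but not needed for the argument.
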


Based on theorem \ref{sumA} we give the following corollary \ref{overviewA}, giving a complete overview of all triangulated orbit categories \(\mathcal O_F(A_n)\) containing a \(2\)-cluster tilting object.

\begin{corollary}\label{overviewA}
Let \(\mathcal O_F(A_n)=\mathcal D^b(A_n)/F\) for \(n\geq 1\). Then \(\mathcal O_F(A_n)\) has a \(2\)-cluster tilting object if and only if \(F\) is of the following form:
\begin{itemize}
\item \(F=\tau^{t(n+3)}\) for all \(n\geq 1\) and \(t\in\mathbb Z\)
\item \(F=\tau^{t(n+3)-1}\left[1\right]\) for all \(n\geq 1\) and \(t\in\mathbb Z\)
\item \(F=\tau^{t(l+2)}\) for \(n=2l+1\) and \(t\in\mathbb Z\)
\item \(F=\tau^{t(l+2)-1}\left[1\right]\) for \(n=2l+1\) and \(t\in\mathbb Z\)
\item \(F=\tau^{t(l+1)}\) for \(n=3l\) and \(t\in\mathbb Z\)
\item \(F=\tau^{t(l+1)-1}\left[1\right]\) for \(n=3l\) and \(t\in\mathbb Z\)
\end{itemize}.
\end{corollary}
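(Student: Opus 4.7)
The plan is to translate the question into the $2$-cluster category $\mathcal C_2(A_n)$ and then invoke Theorem~\ref{sumA}. Every autoequivalence of $\mathcal D^b(A_n)$ has the form $F = \tau^a[b]$ for some $a,b\in\mathbb Z$. I would first apply Theorem~\ref{correspondence} to replace the question ``does $\mathcal O_F(A_n)$ admit a $2$-cluster tilting object?'' by ``does $\mathcal D^b(A_n)$ admit an $F$-periodic $2$-cluster tilting subcategory $\mathcal X$?''. Corollary~\ref{correspondenceDbOgCm} puts such an $\mathcal X$ in bijection with a $2$-cluster tilting object $T$ in $\mathcal C_2(A_n)$, and Lemma~\ref{periodInDerived}, combined with the identification $\tau \cong [1]$ in $\mathcal C_2(A_n)$, converts $F$-periodicity of $\mathcal X$ into the much simpler condition $\tau^{a+b}T = T$.

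Next I would classify the integers $c := a+b$ for which $\tau^{c}T = T$ holds for some $2$-cluster tilting object $T$ in $\mathcal C_2(A_n)$. Since $\tau^{n+3}$ is the identity on $\mathcal C_2(A_n)$, the values $c = t(n+3)$ always work, with any $T$ as a witness. Theorem~\ref{sumA} supplies the remaining admissible values: $c = t(l+2)$ when $n = 2l+1$, witnessed by a $2$-symmetric $T$ (whose existence is built into Theorem~\ref{decompU}); and $c = t(l+1)$ when $n = 3l$, witnessed by a $3$-symmetric $T$ (explicitly constructed in the proof of Theorem~\ref{3sumImp1}). The ``only if'' direction of Theorem~\ref{sumA} then shows that no other value of $c$ is admissible.

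For each admissible $c$, I would write $F = \tau^{c-b}[b]$ and select the two canonical representatives $b = 0$ (giving $F = \tau^c$) and $b = 1$ (giving $F = \tau^{c-1}[1]$); ranging $c$ over the three admissible residue classes produces exactly the six families in the corollary. The step I expect to require the most care is this final enumeration: I must verify that the pair $b \in \{0,1\}$ exhausts the autoequivalences relevant to the paper's framework of triangulated orbit categories, so that every $F$ on the list does produce a triangulated $\mathcal O_F(A_n)$ to which Theorem~\ref{correspondence} applies, and conversely that no $F$ outside the list can yield a triangulated $\mathcal O_F$ admitting a $2$-cluster tilting object. Once this bookkeeping is done, the corollary follows directly by combining the reduction to $\mathcal C_2(A_n)$ with the classification in Theorem~\ref{sumA}.
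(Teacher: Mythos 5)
Your proposal is correct and follows essentially the same route as the paper: the paper's (very terse) proof likewise reduces to the three admissible periodicities of Theorem \ref{sumA} and applies Lemma \ref{ImCltilt} and Lemma \ref{periodInDerived}, with its case split into cylindrical versus Moebius-shaped AR-quivers corresponding exactly to your choice of representatives $b=0$ and $b=1$. The bookkeeping step you flag (that $b\in\{0,1\}$ suffices, via the relation expressing $[2]$ as a power of $\tau$ in $\mathcal D^b(A_n)$) is left implicit in the paper as well, so there is no gap relative to the paper's own standard.
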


\begin{proof}
There are two main types of AR-quivers to consider, cylindrical and Moebius-shaped. For each of these two main types we consider the three different periodicities a \(2\)-cluster tilting object of type \(A_n\) may have and apply lemma \ref{ImCltilt} and lemma \ref{periodInDerived}. 
\end{proof}


\section{$2$-cluster tilting subcategories of type $D$}\label{typeD}
In this section the focus will be on Dynkin diagrams of type $D$ and \(2\)-cluster tilting subcategories \(T\) of these. We will first discuss the AR-quiver, in order to obtain some value of \(s\) such that \(\tau^sT=T\). Recall from section \ref{quivers}, that quivers of cluster tilted algebras of type D are divided into three subcategories, depending on the number and distribution of \(\alpha\)-objects. We will treat each subtype of quiver in its own subsection.  

The AR-quiver of \(\mathcal D^b(D_n)\) is illustrated in figure \ref{AR-quiverD}. It is important to note that for even values of \(n\) the suspension functor in \(\mathcal D^b(D_n)\) sends an indecomposable object to an object in the same \(\tau\)-orbit. However for odd values of \(n\) the suspension functor sends an \(\alpha\)-object to an \(\alpha\)-object in the other \(\tau\)-orbit containing \(\alpha\)-objects. For indecomposable  \(\beta\)-objects the suspension functor sends the object to another object in the same \(\tau\)-orbit, both for even and odd values of \(n\). 

This impacts the number of objects in each \(\tau\)-orbit of the AR-quiver of \(\mathcal C_2(D_n)\). For even values of \(n\), all \(\tau\)-orbits of the AR-quiver of \(\mathcal C_2(D_n)\) contains \(n\) objects. However for odd values of \(n\), it is well-known that the top two rows of objects, the \(\alpha\)-objects, in the AR-quiver of \(\mathcal C_2(D_n)\) form a Moebius-band. Hence there is only one \(\tau\)-orbit of \(\alpha\)-objects, containing a total of \(2n\) objects. The \(\tau\)-orbits containing \(\beta\)-objects contain \(n\) objects also for odd values of \(n\). 

From the above remarks on the length of \(\tau\)-orbits in the AR-quiver of \(\mathcal C_2(D_n)\) it follows that for a \(2\)-cluster tilting object in \(\mathcal C_2(D_n)\) we have \(\tau^{2n}T=T\) for all \(n\). For even values of \(n\) we have \(\tau^nT=T\). However for any indecomposable \(\beta\)-object \(B\) we have \(\tau^nB=B\) in \(\mathcal C_2(D_n)\) for all values of \(n\).  

At this point one should recall that by theorem \ref{hermundD}, we can only expect to classify \(2\)-cluster tilting objects of type \(D_n\) up to flip of the \(\alpha\)-objects. We will see that for \(2\)-cluster tilting objects of type 1 this does not affect the results, however for type 2 and 3 this is of significance. 

Before proceeding with the three subtypes of quivers of type D, we have a result limiting possible values that \(s\) may take such that \(\tau^s T=T\):

\begin{lemma}\label{factor2}
If \(T\) is a \(2\)-cluster tilting object of \(\mathcal D_n\) such that the quiver \(Q\) is of type 2 or 3, and \(\tau^s T=T\), then \(s\) is an even number.
\end{lemma}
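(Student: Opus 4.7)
My plan is to analyze the quiver automorphism of $Q$ induced by $\tau^s$ and combine it with the structural constraints on the $\alpha$-summands of $T$ given by Theorem \ref{alphaDistribution}.

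The starting observation is that $\tau^s T = T$ gives a permutation $\sigma$ of the indecomposable summands of $T$ which, since $\tau^s$ is an auto-equivalence of $\mathcal{C}_2(D_n)$, is an arrow-preserving automorphism of $Q$. Because $\tau$ preserves the $\alpha/\beta$-distinction discussed in Section \ref{quivers}, $\sigma$ restricts to a permutation of the finite set of $\alpha$-summands of $T$.

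I then argue by cases on the action of $\sigma$ on the $\alpha$-summands. If some $\alpha$-summand $U$ is fixed by $\sigma$, then $\tau^s U = U$, and $s$ must be a multiple of the $\tau$-orbit length of $U$ in $\mathcal{C}_2(D_n)$. From the paragraph immediately preceding the lemma, this orbit length equals $n$ when $n$ is even and $2n$ when $n$ is odd; in either case it is even, and therefore so is $s$. Otherwise $\sigma$ has no fixed $\alpha$-summand. For a type 2 quiver this means $\sigma$ interchanges the two $\alpha$-summands $U_1, U_2$, so $\tau^s U_1 = U_2$. Writing $U_2 = \phi^{\epsilon_1}\tau^{n_1+1}U_1$ and $U_1 = \phi^{\epsilon_2}\tau^{n_2+1}U_2$ as in Theorem \ref{alphaDistribution} and composing yields the compatibility $U_1 = \phi^{\epsilon_1+\epsilon_2}\tau^{n_1+n_2+2}U_1$; this compatibility, together with the orbit-length statement above and the total vertex count of $Q$, restricts the allowed pairs $(\epsilon_1, \epsilon_2)$ and pins down the parity of $s$, forcing $s$ to be even. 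For a type 3 quiver an analogous argument applies using the cyclic distribution of several $\alpha$-summands around the central $\beta$-cycle and iterated application of Theorem \ref{alphaDistribution}.

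The main obstacle I expect is the case analysis involving the flip $\phi$: each of the four sub-cases of Theorem \ref{alphaDistribution} (with or without $\phi$ in either relation) must be treated using the fact that $\phi$ is realized by $\tau^n$ on the single Möbius $\alpha$-orbit when $n$ is odd, while $\phi$ exchanges the two disjoint $\alpha$-orbits when $n$ is even. For type 3 the cyclic arrangement of $\alpha$-summands around the central $\beta$-cycle requires extra combinatorial bookkeeping to confirm that the rotation induced by $\sigma$ is consistent only with even values of $s$.
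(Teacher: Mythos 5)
Your overall strategy (reduce to the induced permutation of the $\alpha$-summands and then do arithmetic in the $\tau$-orbits) is reasonable, and the fixed-point case is handled correctly: the $\alpha$-orbit lengths $n$ (for $n$ even) and $2n$ (for $n$ odd) are both even. The gap is in the swap case, at the sentence ``this compatibility, together with the orbit-length statement above and the total vertex count of $Q$, restricts the allowed pairs $(\epsilon_1,\epsilon_2)$ and pins down the parity of $s$, forcing $s$ to be even.'' This is asserted, not proved, and it does not follow from the ingredients you list. Concretely, take $n\equiv 2\pmod 4$ and a type 2 quiver with $n_1=n_2=n/2-1$; then $n_1+n_2+2=n$, and the relations of Theorem \ref{alphaDistribution} (which only say ``$U_2=\tau^{n_2+1}U_1$ or $U_2=\phi\tau^{n_2+1}U_1$'', without telling you which alternative occurs) are perfectly consistent with $\tau^{s}U_1=U_2$ for $s=n_2+1=n/2$, which is odd. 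Nothing in the orbit length (here $n$, which is even) or in the vertex count excludes this, so your bookkeeping as set up does not force $s$ to be even.

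What is missing is the actual source of the parity constraint: the $\Ext$-support of an $\alpha$-object $A$ contains every $\alpha$-object of the form $\tau^{j}A$ with $j$ odd (and $\phi\tau^{j}A$ with $j$ even, $j\neq 0$), so two $\alpha$-summands of one $2$-cluster tilting object lying in the same $\tau$-orbit must differ by an \emph{even} power of $\tau$; equivalently, the alternative with $\phi$ in Theorem \ref{alphaDistribution} occurs exactly when $n_i+1$ is odd, as is used later in the proof of Theorem \ref{mainD}. This is precisely what the paper's one-line proof invokes via figure \ref{extSupportAlpha}: a type 2 or 3 object has two $\alpha$-summands not related by $\phi$, and applying $\tau^{s}$ with $s$ odd to one of them lands in the forbidden part of its own $\Ext$-support (the only escape, $\tau^{s}A=\phi A$ when $n$ is odd and $s\equiv n$, is ruled out because it would force the set of $\alpha$-summands to be closed under $\phi$, contradicting the type 2 and 3 descriptions). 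Once this compatibility input is added, your computation closes in all cases; without it the conclusion is not reachable from Theorem \ref{alphaDistribution} alone. The same omission affects your type 3 paragraph, which defers entirely to ``extra combinatorial bookkeeping.''
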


\begin{proof}
All the three subtypes of quivers have at least two vertices corresponding to \(\alpha\)-objects in the AR-quiver of \(\mathcal C_2(D_n)\). Hence this can be seen directly from the \(\Ext\)-support of an \(\alpha\)-object, illustrated in figure \ref{extSupportAlpha}.
\end{proof}

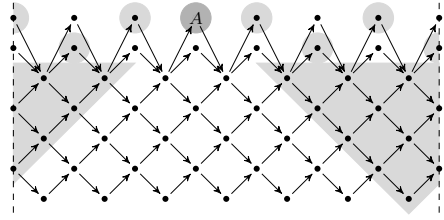
\begin{figure}[h]
\[ \scalebox{.8}{ \begin{tikzpicture}[scale=.5,yscale=-1]
 \foreach \x in {2,4,6}
  \fill [fill1] (\x*2,1) circle (.5);
 \fill [fill2] (6,1) circle (.5);
 \foreach \x in {1,5}
  \fill [fill1] (\x*2,2) circle (.5);
 \fill [fill1] (0,.5) arc (-90:90:.5);
 \fill [fill1] (14,2.5) arc (90:270:.5);
 \fill [fill1] (0,2.5) -- (4,2.5) -- (0,6.5) -- cycle;
 \fill [fill1] (8,2.5) -- (14,2.5) -- (14,6.5) -- (13,7.5) -- cycle;
 \foreach \x in {0,...,7}
 \foreach \y in {1,2,4,6}
 \node (\y-\x) at (\x*2,\y) [vertex] {};
 \foreach \x in {0,...,6}
 \foreach \y in {3,5,7}
   \node (\y-\x) at (\x*2+1,\y) [vertex] {};
 \replacevertex[fill2]{(1-3)}{[tvertex] {$A$}}
 \foreach \xa/\xb in {0/1,1/2,2/3,3/4,4/5,5/6,6/7}
 \foreach \ya/\yb in {1/3,2/3,4/3,4/5,6/5,6/7}
   {
    \draw [->] (\ya-\xa) -- (\yb-\xa);
    \draw [->] (\yb-\xa) -- (\ya-\xb);
   }
 \draw [dashed] (0,.5) -- (0,7.5); 
 \draw [dashed] (14,.5) -- (14,7.5);
\end{tikzpicture} } \]\caption{The indecomposables with a light grey background may not be part of the same \(2\)-cluster tilting object as \(A\).}\label{extSupportAlpha}
\end{figure}

\subsection{$2$-cluster tilting objects/subcategories of type $D$ subtype 1}

We are now ready to consider the first subtype of cluster tilting objects  of type $D$. Recall that a  \(2\)-cluster tilting object \(T\) giving rise to a cluster tilted algebra of type \(D_n\) of type 1 have exactly two indecomposable objects that are \(\alpha\)-objects, \(T_1\) and \(\phi T_1\). Knowing the exact number and placement of the \(\alpha\)-objects give us the following result:

\begin{theorem}
Let \(T\) be a \(2\)-cluster tilting object of \(\mathcal C_2(D_n)\) giving rise to a cluster tilted algebra \(\End_{\mathcal C}(T)^{op}\) with quiver \(Q\) of type 1. Assume that \(\tau^sT=T\) and \(s\) is the smallest integer such that this is true. Then \(s=n\).
\end{theorem}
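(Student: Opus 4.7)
The plan is to prove both halves of the minimality statement separately: first that $\tau^n T = T$, and then that no $0 < s < n$ works.

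For the first half, I would partition the indecomposable summands of $T$ into alpha-objects and beta-objects and track what $\tau^n$ does to each class. By the discussion preceding lemma \ref{factor2}, every $\beta$-tau-orbit in the AR-quiver of $\mathcal C_2(D_n)$ has length exactly $n$, so $\tau^n B_i = B_i$ for each beta-summand, regardless of the parity of $n$. Since $T$ is of type 1, only two summands are alpha-objects, namely $T_1$ and $\phi T_1$. For even $n$, the two alpha-rows form two disjoint tau-orbits of length $n$, so $\tau^n T_1 = T_1$ and $\tau^n \phi T_1 = \phi T_1$. For odd $n$, the alpha-objects form a single Moebius tau-orbit of length $2n$, and I would verify that $\tau^n$ swaps vertically-aligned alpha-objects, so that $\tau^n T_1 = \phi T_1$ and $\tau^n \phi T_1 = T_1$. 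In either case $\tau^n$ permutes the summands of $T$, so $\tau^n T = T$.

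For the second half, suppose for contradiction that some $0 < s < n$ satisfies $\tau^s T = T$. Then $\tau^s$ permutes the summands, so in particular $\tau^s T_1 \in \{T_1, \phi T_1\}$, as these are the only alpha-summands. In the case $\tau^s T_1 = T_1$, the value of $s$ must be a multiple of the tau-orbit length of $T_1$, which is $n$ for even $n$ and $2n$ for odd $n$; both are at least $n$, contradicting $s < n$. In the case $\tau^s T_1 = \phi T_1$, we are forced into odd $n$ (since for even $n$ the two alpha-objects lie in different tau-orbits, making this equality impossible). But then the length-$2n$ Moebius orbit together with the fact $\tau^n T_1 = \phi T_1$ established above implies $s \equiv n \pmod{2n}$, so the smallest positive such $s$ is $n$ itself, again contradicting $s < n$.

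The main obstacle is justifying the identity $\tau^n T_1 = \phi T_1$ for odd $n$. This is a combinatorial fact about the Moebius band of alpha-objects in $\mathcal C_2(D_n)$: tracing $\tau^n$ around the band sends each alpha-object to the vertically-aligned alpha in the opposite row, which by definition is its flip. Once this identity is in hand, together with the orbit-length data for alpha- and beta-objects, the proof is routine bookkeeping on the summands of $T$. Finally, the conclusion for the bounded derived category follows via lemma \ref{periodInDerived}.
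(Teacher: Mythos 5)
Your proposal is correct and follows essentially the same route as the paper: both arguments reduce everything to tracking the two $\alpha$-summands $T_1$ and $\phi T_1$, splitting on the parity of $n$ according to whether the $\alpha$-objects form two $\tau$-orbits of length $n$ or a single Moebius orbit of length $2n$ with $\tau^n = \phi$ on it. Your version is somewhat more explicit about the $\beta$-summands and about ruling out $s<n$, but the underlying idea is identical.
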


\begin{proof}
First assume that \(n\) is an even number. Then there are two \(\tau\)-orbits of \(\alpha\)-objects, each containing \(n\) objects. Since \(T_1\) and \(\phi T_1\) do not lie in the same \(\tau\)-orbit, and these are the only \(\alpha\)-objects it is clear that \(n\) is the smallest value of \(s\) such that \(\tau^sT_1=T_1\).
Now assume that \(n\) is an odd number. Then \(T_1\) and \(\phi T_1\) lie in the same \(\tau\)-orbit containing \(2n\) objects. Hence \(\tau^nT_1=\phi T_1\) and \(\tau^n \phi T_1=T_1\), and therefore \(n\) is the smallest value of \(s\) such that \(\tau^sT=T\).
\end{proof}

\subsection{$2$-cluster tilting objects/subcategories of type $D$ subtype 2}

The second subtype of \(2\)-cluster tilting objects of \(\mathcal C_2(D_n)\) have two \(\alpha\)-objects \(T_1\) and \(T_2\), such that \(T_2\neq\phi T_1\). Furthermore from theorem \ref{alphaDistribution} we know that if the size of the subquivers of type \(A\) attached at the connecting vertices are \(n_1\) and \(n_2\) then \(T_1=\tau^{n_1+1}T_2\) or \(T_1=\tau^{n_1+1}\phi T_2\), and \(T_2=\tau^{n_2+1}T_1\) or \(T_2=\tau^{n_2+1}\phi T_1\).

\begin{definition}
A \(2\)-cluster tilting object of subtype 2 is called \(2\)-symmtric if \(Q\) has the same subquiver of type \(A\) attached at both connecting vertices.  
\end{definition}

\begin{theorem}
Let \(T\) be a \(2\)-cluster tilting object of type \(D_n\) of subtype 2. Assume that \(s\) is the smallest number such that \(\tau^sT=T\). Then
\begin{itemize}
\item [1.] for even values of \(n\) we have \(s=n/2\) if and only if \(T\) is \(2\)-symmetric and \(4|n\). Otherwise \(s=n\).
\item [2.] for odd \(n\) we have \(s=2n\) for all \(T\).
\end{itemize} 
\end{theorem}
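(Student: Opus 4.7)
The proof proceeds by case analysis on the parity of $n$, using Theorem~\ref{alphaDistribution} to control the $\alpha$-summands. Write the two $\alpha$-summands as $T_1, T_2$ satisfying $T_1 = \phi^{a_1}\tau^{n_1+1}T_2$ and $T_2 = \phi^{a_2}\tau^{n_2+1}T_1$ for some $a_i \in \{0,1\}$; the vertex count of a subtype 2 quiver forces $n_1 + n_2 = n-2$. For $\tau^s T = T$ one needs $\tau^s$ to permute $\{T_1,T_2\}$, giving two cases: (A) each $T_i$ is fixed, or (B) they are swapped.

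For even $n$, the two $\alpha$-orbits each have length $n$, are preserved by $\tau$, and swapped by $\phi$. Composing the two relations gives $T_1 = \phi^{a_1+a_2}\tau^{n_1+n_2+2}T_1 = \phi^{a_1+a_2}T_1$ (using $\tau^n=\mathrm{id}$), forcing $a_1=a_2$. In Case~(A), $n\mid s$, so $s=n$. In Case~(B), since $\tau$ preserves each orbit, $T_1$ and $T_2$ must lie in the same orbit, hence $a_1=a_2=0$. The swap condition then reads $s \equiv -(n_1+1) \equiv -(n_2+1) \pmod n$, which forces $n_1 = n_2$ (as $|n_1-n_2|\leq n-2<n$), i.e., $T$ is 2-symmetric, and yields $s=n/2$. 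To obtain sufficiency, we verify that $\tau^{n/2}$ also permutes the $\beta$-summands: the two $A$-triangles rooted at $T_1$ and $T_2$ are exchanged correctly by 2-symmetry (same subquiver chosen on both sides), while the remaining two $\beta$-summands $X_2, X_3$ — coming from the two non-star vertices of the central 4-cycle of the cluster tilted quiver — sit in a specific configuration forced by the 4-cycle $T_1\to X_2\to T_2\to X_3\to T_1$. A local AR-quiver computation using $T_1=\tau^{n/2}T_2$ shows that $X_2$ and $X_3$ lie in a common $\tau$-orbit and that $\tau^{n/2}X_2=X_3$ precisely when $4\mid n$; for $n\equiv 2\pmod 4$ the image $\tau^{n/2}X_2$ is a $\beta$-object not among the summands of $T$, so $\tau^{n/2}T\neq T$ and we drop back to Case~(A) with $s=n$.

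For odd $n$, the $\alpha$-objects form a single M\"obius $\tau$-orbit of length $2n$ on which $\phi$ acts as $\tau^n$. The same composition gives $T_1 = \tau^{(a_1+a_2+1)n}T_1$, forcing $a_1+a_2$ to be odd (so $a_1-a_2=\pm 1$). Case~(B) would yield the two swap congruences $s+a_i n+n_i+1\equiv 0\pmod{2n}$; subtracting gives $(a_1-a_2)n\equiv n_2-n_1\pmod{2n}$, i.e., $\pm n\equiv n_2-n_1\pmod{2n}$, which has no solution because $|n_1-n_2|\leq n-2<n$. Hence Case~(B) is excluded, $\tau^s$ must fix each $T_i$ individually, and the orbit length $2n$ forces $s=2n$. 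The main obstacle is the $4\mid n$ step in the even case: pinning down the AR-quiver coordinates of the 4-cycle $\beta$-summands $X_2, X_3$ and tracking the action of $\tau^{n/2}$ on them is where the nontrivial geometric content lies; the remaining case work is routine orbit arithmetic.
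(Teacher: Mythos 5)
Your orbit arithmetic on the $\alpha$-summands is sound and handles the odd case completely (your exclusion of the swap case via $\pm n\equiv n_2-n_1\pmod{2n}$ is an expanded version of the paper's one-line observation that a swap would force $T_1=\phi T_2$, contradicting subtype 2). The even case, however, has a genuine gap exactly where you locate ``the nontrivial geometric content.'' First, a structural miscount: in a subtype 2 object the two non-star vertices of the central $4$-cycle \emph{are} the two $\alpha$-summands $T_1,T_2$, and every $\beta$-summand lies in one of the two $A$-triangles attached at the connecting (star) vertices --- the vertex count $2+n_1+n_2=n$ that you yourself use leaves no room for two further $\beta$-summands $X_2,X_3$. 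So the ``local AR-quiver computation'' you defer to, which is supposed to extract the condition $4\mid n$ from the behaviour of $X_2,X_3$ under $\tau^{n/2}$, has no objects to act on, and is in any case never carried out.

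Second, the true source of $4\mid n$ is a parity constraint on the $\alpha$-objects that your case (B) silently assumes away. You set $a_1=a_2=0$, i.e.\ $T_1=\tau^{n_1+1}T_2$ with no flip; but the refinement of Theorem \ref{alphaDistribution} used in the paper (and visible in the Ext-support of an $\alpha$-object, Figure \ref{extSupportAlpha}) is that the flip is absent precisely when $n_1+1$ is even --- an odd $\tau$-power of an $\alpha$-object has a nonvanishing extension with it and so cannot be the other $\alpha$-summand. Since your case (B) forces $n_1+1=n/2$, the configuration you analyse exists only when $n/2$ is even, i.e.\ $4\mid n$; for $n\equiv 2\pmod 4$ one has $T_1=\phi\tau^{n/2}T_2$ instead, case (B) is vacuous already at the level of $\alpha$-objects, and $s=n$. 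The paper obtains this cleanly from Lemma \ref{factor2}: any $s$ with $\tau^sT=T$ for a subtype 2 object is even, so $s=n/2$ forces $4\mid n$. Your sufficiency direction needs the same input: to conclude $\tau^{n/2}T=T$ for a $2$-symmetric object you must first know $\tau^{n/2}T_2=T_1$ without a flip, which is again exactly the condition that $n/2$ be even. With Lemma \ref{factor2} (or the parity refinement of Theorem \ref{alphaDistribution}) inserted and the $X_2,X_3$ discussion deleted, your argument closes up and essentially coincides with the paper's, which handles the $\beta$-summands by exchanging the two $A$-triangles via Lemma \ref{MorpAtriangle}.
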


\begin{proof}
Denote the \(\alpha\)-objects by \(T_1\) and \(T_2\).

First assume that \(T\) is \(2\)-symmetric and that \(4|n\). All \(\tau\)-orbits have \(n\) objects, so \(s\leq n\). The same subquiver of type \(A_{n/2-1}\) is attached at both connecting vertices. We note that \(n/2\) is an even number and thus theorem \ref{alphaDistribution}  implies that \(\tau^{n/2}T_1=T_2\) and \(\tau^{n/2}T_2=T_1\).  

Denote by \(\Delta_1\) and \(\Delta_2\) the subcategories of type \(\modf A_l\) attached at the connecting vertices of \(Q\). By lemma \ref{MorpAtriangle} there is a morphism from \(\Delta_1\) to \(T_1\) factoring through the projective injective object  \(\Pi(\Delta_1)\). Similarly there is a morphism from \(\Delta_2\) to \(T_2\) factoring through \(\Pi(\Delta_2)\). Hence for each pair of corresponding vertices \(T_{\Delta_1}\) and \(T_{\Delta_2}\) in \(\Delta_1\) and \(\Delta_2\) we have \(\tau^{n/2}T_{\Delta_1}=T_{\Delta_2}\) and \(\tau^{n/2}T_{\Delta_2}=T_{\Delta_1}\). Since it is the same subquiver of type \(A\) attached at both connecting vertices, we have \(\tau^{n/2}T=T\).

Now assume that \(T\) is a \(2\)-cluster tilting object of type \(D_n\) such that  \(\tau^{n/2}T=T\). This can only be the case if \(2|n\). From lemma \ref{factor2} it follows that \(s=n/2\) is an even number, hence \(4|n\). 

Let \(Q_1\) and \(Q_2\) be the subquivers of type \(A_{n/2-1}\) attached at the two connecting vertices. It is clear that \(Q_1\) and \(Q_2\) must be the same quiver in the mutation class of type \(A\). Hence \(T\) is \(2\)-symmtric.

If \(n\) is odd then the \(\tau\)-orbit containing \(\alpha\)-objects contains \(2n\) objects. The only possibility of achieving a better value of \(s\) than \(2n\), is that \(\tau^nT_1=T_2\) and \(\tau^nT_2=T_1\). This however would mean that \(T_1=\phi T_2\), a contradition of the choice of \(T\).  
\end{proof}


\subsection{$2$-cluster tilting objects/subcategories of type $D$ subtype 3}

We recall from subsection \ref{ClTiltAlgTypeD} that cluster tilting objects of subtype 3 have at least three indecomposable \(\alpha\)-objects. The distribution of these \(\alpha\)-objects are described in theorem \ref{alphaDistribution}, they form the central cycle of the quiver of \(\End_{\mathcal C_2(D_n)}(T)^{op}\). We start off by studying how the number of \(\alpha\)-objects is affected by \(T\) being closed under \(\tau^s\). Recall that if \(T\) is a \(2\)-cluster tilting object of \(\mathcal C_2(D_n)\) such that \(\tau^sT=T\), then \(s\) is an even number by lemma \ref{factor2}.

Since we are interested in studying the cases \(\tau^sT=T\) for \(s<n\) for even \(n\) and \(s<2n\) for odd values of \(n\), it is clear that if \(\tau^sT=T\), then \(s\) divides \(n\) for even \(n\) and \(s\) divides \(2n\) for odd \(n\).  

\begin{lemma}\label{alphaInMsym}
Let \(T\) be a \(2\)-cluster tiling object of \(\mathcal C_2(D_n)\) and \(s=2k\) an even number such tbhat \(\tau^sT=T\). Then the number of indecomposable \(\alpha\)-objects \(a\) in \(T\) is divisible by \(\frac{n}{s}\) for even \(n\) and is divisible by \(\frac{2n}{s}\) for odd \(n\). 
\end{lemma}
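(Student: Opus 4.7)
The plan is to analyze the action of $\tau^s$ on the set of indecomposable $\alpha$-objects of $\mathcal{C}_2(D_n)$ and then exploit that the $\alpha$-summands of $T$ form a $\tau^s$-invariant subset. The key observation is that since $\alpha$-objects are precisely those lying in the top two $\tau$-orbits of $\mathcal{D}^b(D_n)$, and the AR-translate $\tau$ preserves every $\tau$-orbit, the property of being an $\alpha$-object is preserved by $\tau$, and hence by $\tau^s$. Consequently, if $\mathcal{A}(T)$ denotes the set of indecomposable $\alpha$-summands of $T$, then the hypothesis $\tau^s T = T$ forces $\tau^s(\mathcal{A}(T)) = \mathcal{A}(T)$, so $\mathcal{A}(T)$ decomposes as a disjoint union of $\tau^s$-orbits.

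First I would analyze the orbit sizes. For even $n$, the remarks preceding the lemma say that each $\tau$-orbit of $\mathcal{C}_2(D_n)$ has exactly $n$ indecomposable objects; in particular the two $\tau$-orbits of $\alpha$-objects are each cyclic of length $n$. The action of $\tau^s$ on such a cyclic $\tau$-orbit of length $n$ has orbits of common size $n/\gcd(n,s)$, which equals $n/s$ under the standing assumption $s \mid n$ noted just before the lemma. For odd $n$, the excerpt records that the $\alpha$-objects form a single Möbius-type $\tau$-orbit of length $2n$. The same cyclic computation gives that every $\tau^s$-orbit inside this set has size $2n/\gcd(2n,s) = 2n/s$, using that $s \mid 2n$.

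Finally I would conclude by a counting argument. In either case, $\mathcal{A}(T)$ is a disjoint union of $\tau^s$-orbits all of the same size: $n/s$ for even $n$ and $2n/s$ for odd $n$. Therefore its cardinality $a = |\mathcal{A}(T)|$ is a multiple of that common orbit size, which is exactly the claimed divisibility.

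The main (very mild) obstacle is just making sure the $\tau^s$-orbit size is computed correctly in each case. For odd $n$ one must note that although $s \mid 2n$, $s$ need not divide $n$, so one cannot simply consider the two sub-``rows'' of $\alpha$-objects independently; instead one uses that, after identification via the suspension, the $\alpha$-objects form one cyclic $\tau$-orbit of length $2n$ on which $\tau^s$ acts by a shift of $s$, and the divisibility $s \mid 2n$ forces every orbit to have the uniform size $2n/s$. All other steps (invariance of the $\alpha/\beta$ partition under $\tau$, cyclic freeness of the $\tau$-action on a $\tau$-orbit) are immediate from the structure of the AR-quiver described in Section \ref{quivers}.
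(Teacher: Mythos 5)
Your proposal is correct and follows essentially the same route as the paper: both arguments observe that the $\alpha$-summands of $T$ form a $\tau^s$-invariant subset of the $\tau$-orbit(s) of $\alpha$-objects (two cyclic orbits of length $n$ for even $n$, one of length $2n$ for odd $n$) and count $\tau^s$-orbits of uniform size $n/s$, respectively $2n/s$, using the divisibility $s\mid n$ (resp.\ $s\mid 2n$) recorded just before the lemma. Your write-up is simply a more detailed version of the paper's terse proof, with the $\gcd$ computation and the $\tau$-invariance of the $\alpha/\beta$ partition made explicit.
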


\begin{proof}
Assume that \(n\) is even. There are two \(\tau\)-orbits of length \(n\) containing \(\alpha\)-objects, each may contain multiple of \(\frac{n}{s}\).

Now assume that \(n\) is an odd number. Then there is only one \(\tau\)-orbit of \(\alpha\)-objects of length \(2n\). Clearly this \(\tau\)-orbit contains a multiple of \(\frac{2n}{s}\) indecomposables contained in \(T\). 
\end{proof}

\begin{figure}[h!]
\begin{tikzpicture}[font=\small, scale=1.1]
\node[rotate around={90:(0,-2.5)}](atm-1) at (0,0)[inner sep=1pt]{\rotatebox{-90}{\(v_{a-1}\)}};
\node[rotate around={54:(0,-2.5)}](atm) at (0,0)[inner sep=1pt]{\rotatebox{-54}{\(v_{a}\)}};
\node[rotate around={18:(0,-2.5)}](a1) at (0,0)[inner sep=1pt]{\rotatebox{-18}{\(v_1\)}};
\node[rotate around={-18:(0,-2.5)}](a2) at (0,0)[inner sep=1pt]{\rotatebox{18}{\(v_2\)}};
\node[rotate around={-54:(0,-2.5)}](a3) at (0,0)[inner sep=1pt]{\rotatebox{54}{\(v_3\)}};
\node[rotate around={-90:(0,-2.5)}](a4) at (0,0)[inner sep=1pt]{\rotatebox{90}{\(v_{i-1}\)}};
\node[rotate around={-126:(0,-2.5)}](a5) at (0,0)[inner sep=1pt]{\rotatebox{126}{\(v_{i}\)}};
\node[rotate around={-162:(0,-2.5)}](a6) at (0,0)[inner sep=1pt]{\rotatebox{162}{\(v_{i+1}\)}};
\node[rotate around={-198:(0,-2.5)}](a7) at (0,0)[inner sep=1pt]{\rotatebox{198}{\(v_{i+2}\)}};
\node[rotate around={-234:(0,-2.5)}](a8) at (0,0)[inner sep=1pt]{\rotatebox{234}{\(v_{i+3}\)}};

\node[rotate around={0:(0,-3.5)}](n1) at (0,1)[inner sep=2pt]{\(Q_1\)};
\node[rotate around={-36:(0,-3.5)}](n2) at (0,1)[inner sep=2pt]{\rotatebox{36}{\(Q_2\)}};
\node[rotate around={-72:(0,-3.5)}](n3) at (0,1){};
\node[rotate around={-108:(0,-3.5)}](n4) at (0,1)[inner sep=2pt]{\rotatebox{108}{\(Q_{i-1}\)}};
\node[rotate around={-144:(0,-3.5)}](n5) at (0,1)[inner sep=2pt]{\rotatebox{144}{\(Q_i\)}};
\node[rotate around={-180:(0,-3.5)}](n6) at (0,1)[inner sep=2pt]{\rotatebox{180}{\(Q_{i+1}\)}};
\node[rotate around={-216:(0,-3.5)}](n9) at (0,1)[inner sep=1pt]{\rotatebox{216}{\(Q_{i+2}\)}};
\node[rotate around={36:(0,-3.5)}](n7) at (0,1)[inner sep=2pt]{\rotatebox{-36}{\(Q_a\)}};
\node[rotate around={72:(0,-3.5)}](n8) at (0,1)[inner sep=2pt]{\rotatebox{-72}{\(Q_{a-1}\)}};

\draw[->] (a1) -- (a2);
\draw[->] (a2) -- (a3);
\draw[->] (atm) -- (a1);
\draw[->] (a4) -- (a5);
\draw[->] (a5) -- (a6);
\draw[->] (a6) -- (a7);
\draw[->] (atm-1)--(atm);
\draw[->] (a7)--(a8);
\draw[dotted] (a3) --(a4);
\draw[dotted] (a8)--(atm-1);

\draw[->] (a2) --(n1);
\draw[->] (n1)--(a1);
\draw[->] (a3)--(n2);
\draw[->](n2)--(a2);
\draw[->](a5)--(n4);
\draw[->](n4)--(a4);
\draw[->](a6)--(n5);
\draw[->](n5)--(a5);
\draw[->](a7)--(n6);
\draw[->](n6)--(a6);
\draw[->](a8)--(n9);
\draw[->](n9)--(a7);
\draw[->](atm)--(n8);
\draw[->](n8)--(atm-1);
\draw[->](a1)--(n7);
\draw[->](n7)--(atm);
\end{tikzpicture}\caption{The quiver \(Q\) of \(\End_{\mathcal C_2}(T)^{op}\) of a \(2\)-cluster tilting object \(T\) of \(\mathcal C_2(D_n)\). Each \(Q_i\) represents the quiver of a cluster tilting object of type \(A\). }\label{typeDmsymmetric2}
\end{figure}
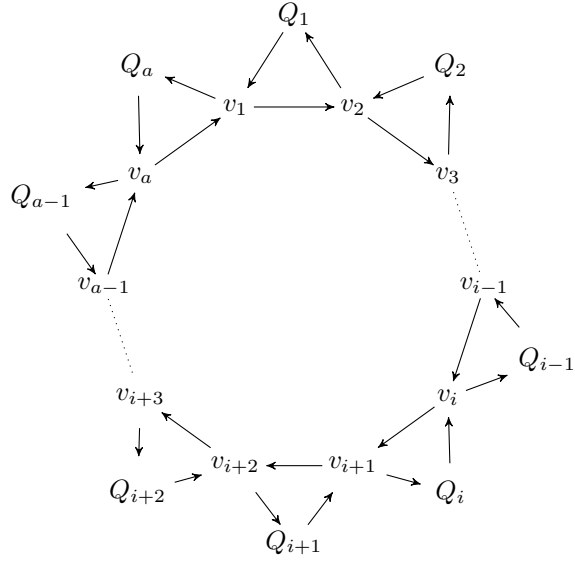

As for type A, there are quivers of type D which are particularly nice when one looks for symmetric properties. We now give a definition, describing the details of such quivers of type D. 

\begin{definition}\label{msym}
Let \(T\) be a \(2\)-cluster tiling object of \(\mathcal C_2(D_n)\) of type 3, with quiver as illustrated in figure \ref{typeDmsymmetric2}, where \(a\) is the number of \(\alpha\)-objects. If \(l\leq a\) is the largest integer such that \(a=l\cdot t\) and  \(Q_{r}=Q_{r+t}=Q_{r+2t}=\ldots=Q_{r+(l-1)t}\) for each \(r\in\left\{1,\ldots,t\right\}\) then we call \(T\) \(l\)-symmetric. Denote the number of vertices in subquiver \(Q_i\) by \(n_i\). If \(T\) is an \(l\)-symmetric cluster tilting object with quiver \(Q\), then we define \(p:=\sum_{i=1}^tn_i+t\).
\end{definition}

For a \(2\)-cluster tilitng object of \(\mathcal C_2(D_n)\) that is \(l\)-symmetric we can now see how \(l\) relates to \(n\):

\begin{lemma} \label{lRelatesTon}
Let \(\mathcal T\) be a \(2\)-cluster tilting object of \(\mathcal C_2(D_n)\) that is \(l\)-symmetric. Then we have that \(p\) divides \(n\).
\end{lemma}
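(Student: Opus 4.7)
The plan is to count the number of vertices of the quiver \(Q\) of \(\End_{\mathcal{C}_2(D_n)}(\mathcal T)^{op}\) in two different ways. On one hand, \(\mathcal T\) is a \(2\)-cluster tilting object of \(\mathcal C_2(D_n)\), and so it has exactly \(n\) indecomposable summands; hence \(Q\) has exactly \(n\) vertices. On the other hand, by the type-\(3\) description summarised in figure \ref{typeDmsymmetric2}, the vertices of \(Q\) split into the \(a\) \(\alpha\)-vertices \(v_1,\ldots,v_a\) forming the central cycle together with the vertices of the \(a\) subquivers \(Q_1,\ldots,Q_a\) attached between consecutive \(\alpha\)-vertices. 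Writing \(n_i\) for the number of vertices of \(Q_i\), this yields the identity
\[
n \;=\; a \;+\; \sum_{i=1}^{a} n_i.
\]

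Next I would apply the \(l\)-symmetry hypothesis from Definition \ref{msym}. This tells us that \(a = l\cdot t\) and \(Q_r = Q_{r+t} = \cdots = Q_{r+(l-1)t}\) for each \(r\in\{1,\ldots,t\}\), which in particular forces \(n_r = n_{r+t} = \cdots = n_{r+(l-1)t}\). Consequently the sum over all subquivers decomposes into \(l\) identical blocks of length \(t\), giving \(\sum_{i=1}^{a} n_i = l\cdot\sum_{r=1}^{t} n_r\). Substituting this into the vertex count and using \(a = lt\) produces
\[
n \;=\; l\cdot t \;+\; l\cdot \sum_{r=1}^{t} n_r \;=\; l\Bigl(t + \sum_{r=1}^{t} n_r\Bigr) \;=\; l\cdot p,
\]
so \(p\) divides \(n\), as required.

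There is no real obstacle here; the result is essentially a pigeonhole-style vertex count organised by the periodicity of the subquivers around the central cycle. The only point that needs some attention is the bookkeeping behind \(n_i\), namely the convention (implicit in the definition of \(p\)) that \(n_i\) records all vertices of the \(i\)-th block between consecutive \(\alpha\)-vertices apart from the \(\alpha\)-vertices themselves; once this is fixed, the identity \(n = lp\) holds on the nose and the claim follows.
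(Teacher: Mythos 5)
Your proof is correct and is exactly the vertex-counting computation that the paper compresses into the single line ``follows directly from definition \ref{msym}'': the identity \(n = a + \sum_{i=1}^{a} n_i\) combined with \(a = lt\) and the periodicity \(n_r = n_{r+t} = \cdots = n_{r+(l-1)t}\) gives \(n = lp\). So you have simply written out the details the paper leaves implicit; there is no gap and no divergence in approach.
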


\begin{proof}
Follows directly from definition \ref{msym}.
\end{proof}

Before proceeding with the main results of this section we need one more technical definition:
\begin{definition}
Let \(T\) be a \(2\)-cluster tilting object, with quiver \(Q\), such that \(\tau^sT=T\). Denote the subquivers attached to the central cycle by \(Q_i\) for \(i\in\left\{1,\ldots,mt\right\}\). We then define an equivalence relation on the set of quivers \(Q_i\) by defining that \(Q_i\sim_{s} Q_j\) if \(\tau^{sw}T_{Q_i}=T_{Q_j}\) for some \(w\in\mathbb Z\). We denote the equivalence class of a quiver \(Q_j\) by \(\left[Q_i\right]\), where \(i\) is the smallest index of any quiver in the equivalence class. Denote by \(n_i\) the number of vertices in the quivers in the equivalence class \(\left[Q_i\right]\).
\end{definition}

This will help us distinguish quivers attached to the central cycle that are isomorphic as quivers, but where the corresponding summands are not closed under \(\tau^s\).

\begin{theorem}\label{mainD}
Let \(T\) be a \(2\)-cluster tiling object of \(\mathcal C_2(D_n)\) with corresponding quiver of subtype 3. Assume that \(T\) is \(l\)-symmetric and that \(s\) is the smallest positive integer such that \(\tau^sT=T\). Then \(s=\begin{cases}  n/l&\mbox{if } n/l \mbox{ is even} \\
									2n/l & \mbox{ if } n/l \mbox{ is odd.}
\end{cases}\)
\end{theorem}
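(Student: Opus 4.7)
The plan mirrors the type A argument from Theorem~\ref{3sumImp1}: construct a specific realization $V$ of the $l$-symmetric quiver that satisfies the claimed periodicity, then transfer to an arbitrary $T$ via Theorem~\ref{hermundD}. Since $\tau^s$ commutes with $\phi^j\tau^i$, any periodicity of $V$ under $\tau^s$ is inherited by every $T=\phi^j\tau^i V$ in the same class.

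To build $V$, I would fix an initial $\alpha$-object $v_1$ together with an A-triangle $\Delta_1$ realizing the subquiver $Q_1$ and satisfying $v_1=\Pi(\Delta_1)$. Then I walk around the central cycle using Theorem~\ref{alphaDistribution}, placing $v_{i+1}\in\{\tau^{n_i+1}v_i,\ \phi\tau^{n_i+1}v_i\}$ and attaching the A-triangle $\Delta_i$ realizing $Q_i$. The $l$-symmetry is exploited by declaring $v_{1+kt}:=\tau^{kp}v_1$ and $\Delta_{1+kt}:=\tau^{kp}\Delta_1$ for $k=1,\dots,l-1$, where $p=n/l$. Lemma~\ref{MorpAtriangle} guarantees that each A-triangle is self-contained and therefore transports cleanly as a unit under $\tau^p$, so the subquivers match up thanks to the identities $Q_{i+t}=Q_i$.

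The case split $n/l$ even versus odd reflects the $\tau$-orbit structure of $\alpha$-objects in $\mathcal{C}_2(D_n)$: two orbits of length $n$ when $n$ is even (with $\phi$ swapping them), and one orbit of length $2n$ when $n$ is odd (with $\phi=\tau^n$). The total flip count around the full cycle is forced to match the parity of $n$ by Theorem~\ref{alphaDistribution}, and combined with $n=lp$ and the $l$-periodic propagation this shows that $\tau^p$ rotates $v_i$ to $v_{i+t}$ without a residual flip precisely when $p$ is even; then $\tau^p V=V$ and $s=p=n/l$. When $p$ is odd the rotation produces $\phi v_{i+t}\notin V$, but applying $\tau^p$ twice absorbs the flip since $\phi^2=\mathrm{id}$, giving $\tau^{2p}V=V$ and $s=2p=2n/l$.

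For minimality, suppose $\tau^{s'}V=V$ with $0<s'<s$. Then $\tau^{s'}$ permutes the $\alpha$-summands of $V$ via some rotation of the central cycle by $k$ positions, and compatibility with the A-triangles (again via Lemma~\ref{MorpAtriangle}) forces $Q_{i+k}=Q_i$ for every $i$. By maximality of $l$ in Definition~\ref{msym}, $t$ is the minimal positive period of the subquiver sequence, so $k\geq t$; the flip-parity analysis above then lower-bounds $s'$ by $p$ (respectively $2p$ in the odd case), contradicting $s'<s$. The principal obstacle is the flip-parity bookkeeping linking $p\bmod 2$ to whether $\tau^p$ alone preserves $V$; once that is in place, the remainder is routine counting in the style of Theorem~\ref{3sumImp1}.
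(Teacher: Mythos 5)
Your proposal runs on the same engine as the paper's proof: Theorem \ref{alphaDistribution} gives \(v_{i+1}=\tau^{n_i+1}v_i\) or \(\phi\tau^{n_i+1}v_i\) according to the parity of \(n_i+1\), so traversing one period of \(t\) connecting vertices yields \(\phi^{p\bmod 2}\tau^{p}\); Lemma \ref{MorpAtriangle} drags the attached \(A\)-triangles along; and maximality of \(l\) rules out any smaller period. The two places where you deviate are essentially presentational, but each hides a soft spot. First, you build a model object \(V\) and transfer to \(T\) via Theorem \ref{hermundD}, whereas the paper argues directly on the given \(T\). The detour costs you: you never verify that your collection \(V\) is rigid (the type-\(A\) analogue, Theorem \ref{3sumImp1}, spends real effort on exactly this), and the declaration \(v_{1+kt}:=\tau^{kp}v_1\) contradicts the flip forced by Theorem \ref{alphaDistribution} when \(p\) is odd --- there you must take \(v_{1+kt}=\phi^{k}\tau^{kp}v_1\), as your later flip-parity paragraph implicitly concedes. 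Both issues evaporate if you drop \(V\) and run the flip-parity computation on the \(\alpha\)-summands of \(T\) itself. Second, your minimality argument (a rotation by \(k\) positions forces \(Q_{i+k}=Q_i\), hence \(t\mid k\) by maximality of \(l\)) is a compressed but equivalent form of the paper's: the paper shows that \(\tau^{s}T=T\) makes the quiver \(\tilde l\)-symmetric for \(\tilde l=n/s\) (resp.\ \(2n/s\)) and concludes from \(\tilde l\mid l\) together with \(s\mid\tilde s\). To complete your version you should also dispose of the degenerate rotation \(k=0\) (where \(s'\) must be a multiple of the full orbit length \(n\) or \(2n\)), and, for even \(n\) with \(p\) odd, observe that \(\phi\) exchanges the two \(\tau\)-orbits of \(\alpha\)-objects, so a residual single flip can never be realized by a power of \(\tau\); that is precisely what forces \(s=2p\) rather than \(p\). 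With these details filled in, your argument coincides with the paper's.
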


\begin{proof}
First assume that \(\tilde{s}=\begin{cases} n/l&\mbox{if } n/l \mbox{ is even} \\
									2n/l & \mbox{ if } n/l \mbox{ is odd.} \end{cases}\), hence \(\tilde s\) is an even number. Recall that \(T\) is \(l\)-symmetric, so we have \(n=lp\). Hence if \(n/l\) is even we have \(\tilde{s}=p\), and if \(n/l\) is odd we have \(\tilde{s}=2p\). We want to show that \(\tau^{\tilde{s}}T=T\). By theorem \ref{alphaDistribution} we know that \(v_{i+1}=\tau^{n_{i}+1}v_i\) if \(n_{i}+1\) is even  or \(v_{i+1}=\phi\tau^{n_{i}+1}v_i\) if \(n_{i}+1\) is odd. Hence it follows in the case of \(n/l\) being even, from theorem \ref{alphaDistribution}, that \(v_{i+t}=\tau^p v_i\), and  \(v_{mt+j}=\tau^p v_{(m+1)t+j}\) for each \(j\in\{1,\ldots,t\}\) and each \(m\in\{1,\ldots,l\}\). Furthermore, in the case of \(n/l\) being odd,  we have \(v_{i+2t}=\tau^{2p}v_i\) and \(v_{mt+j}=\tau^{2p}v_{(m+2)t+j}\) for each \(j\in\left\{1,\ldots,t\right\}\) and each \(l\in\left\{1,\ldots,l\right\}\). Hence the set of \(\alpha\)-objects of \(T\) are closed under \(\tau^{\tilde s}\) for both even and odd values of \(n/l\).  

Let \(T_{Q_i}\) be all indecomposable summands of \(T\) corresponding to all vertices in all the \(t\) occurences of subquiver \(Q_i\) in \(Q\). By lemma \ref{MorpAtriangle} it now follows that \(\tau^{\tilde s}T_{Q_i}=T_{Q_i}\), hence we have \(\tau^{\tilde s} T=T\). It follows that since \(s\) is minimal such that \(\tau^sT=T\), we have \(s | \tilde s\).

Now assume that \(\tilde l=\begin{cases}n/s & \text{if \(n\) is even}  \\
 2n/s & \text{ if \(n\) is odd}\end{cases}\). Note that this means that \(\tilde l\) is an integer, as \(\tau^sT=T\) with \(s\) an even number and the length of \(\tau\)-orbits is either \(n\) or \(2n\). We want to show that the quiver of \(T\) satisfies all properties of being \(\tilde l\)-symmetric, with the exception that we do not require \(\tilde l\) being the maximal value such that this happens. That is, we want to show that the quiver of \(T\) has a central cycle corresponding to \(a=\tilde{l}\cdot t\) number of \(\alpha\)-objects with the following relations on the attached quivers:
 \(Q_{r}=Q_{r+t}=Q_{r+2t}=\ldots=Q_{r+(\tilde{l}-1)t}\) for each \(r\in\left\{1,\ldots,t\right\}\).

From lemma \ref{alphaInMsym} it follows that the quiver \(Q\) of \(\End_{\mathcal C}(T)^{op}\) has the shape of an \(\tilde lt\)-cycle with subquivers from the mutation class of type \(A\) attached at the connecting vertices. Let \(Q_i\) be a subquiver attached at a connecting vertex of \(Q\). Since \(\tau^sT=T\) the same quiver \(Q_i\) occurs at a total of \(\tilde l\) connecting vertices. Denote by \(n_i\) the number of vertices in quiver \(Q_i\).

The total number of vertices in \(Q\) is \(n\), so from the above considerations we get the following equation: \[\tilde lt+\tilde l\Sigma_{i=1}^tn_i=\tilde l (t+\Sigma_{i=1}^t n_i)=n.\] From which it follows that \( t+\Sigma_{i=1}^t n_i=\begin{cases}s & \text{ if \(n\) is even } \\
						s/2 & \text{ if \(n\) is odd}  \end{cases}\). 

We now want to show that the distribution of the quivers \(Q_i\) amongst the connecting vertices is as in the definiton of \(\tilde l\)-symmetric. Vertices in the central \(\tilde lt\)-cycle of \(Q\) will be denoted with the same notation as in definiton \ref{msym}.

Assume first that \(n\) is even, hence \(\tilde l =n/s\) and \(t+\Sigma_{i=1}^t n_i= s\). We start with the vertices \(v_1\) and \(v_2\) in the central \(\tilde lt\)-cycle. Attached at these vertices is a quiver from the equivalence class \(\left[Q_1\right]\). Next, at the vertices \(v_2\) and \(v_3\) there is a quiver attached from the equivalence class \(\left[Q_2\right]\). Continuing like this, at vertices \(v_j\) and \(v_{j+1}\) there is a quiver attached from the equivalence class \(\left[Q_j\right]\) for each \(j\in\left\{1,\ldots,t\right\}\). Therefore we have \(\tau^sv_1=v_{t+1}\), since \(s\) is even. By lemma \ref{MorpAtriangle} it follows that the quiver attached at vertices \(v_{t+1}\) and \(v_{t+2}\) is a quiver from the equivalence class \(\left[Q_2\right]\). By iterating this process we find that \(Q_r=Q_{r+t}=\ldots=Q_{r+(\tilde l-1)t}\) for \(r\in\left\{1,\ldots,t\right\}\).

Now assume that \(n\) is odd, hence \(\tilde l=2n/s\) and \(t+\Sigma_{i=1}^tn_i=s/2\). Notice that \(n=\tilde ls/2\), so since \(n\) is odd both \(s/2\) and \(\tilde l\) are odd integers. Again in this case we start with vertices \(v_1\) and \(v_2\) in the central \(\tilde lt\)-cycle. Attached at these vertices is a quiver from the equivalence class \(\left[Q_1\right]\). Next, at vertices \(v_2\) and \(v_3\) there is a quiver attached from the equivalence class \(\left[Q_2\right]\). Continuing like this, at vertices \(v_j\) and \(v_{j+1}\) there is a quiver attached from the equivalence class \(\left[Q_j\right]\). Thus we find that \(\phi\tau^{s/2}v_1=v_{1+t}\). We now want to show that the quiver \(Q_x\) attached at the pair of vertices \(v_{1+t}\) and \(v_{2+t}\) is in the equivalence class of \(\left[Q_1\right]\). Considering that \(\phi=\tau^n\) for odd \(n\) we find that \[\phi\tau^{s/2}v_1=\tau^n\tau^{s/2}v_1=\tau^{\tilde ls/2+s/2}v_1=\tau^{s(\tilde l/2+1/2)}v_1=\tau^{sw}v_1=v_{t+1}\] where \(w=\tilde l/2+1/2\) is an integer since \(\tilde l\) is odd. Hence the quiver attached at vertices \(v_{t+1}\) and \(v_{t+2}\) is from the equivalence class \(\left[Q_1\right]\). By iterating this process we find that \(Q_r=Q_{r+t}=\ldots=Q_{r+(\tilde l-1)t}\) for \(r\in\left\{1,\ldots,t\right\}\).

From this it follows that \(\tilde l|l\), hence \(l=\alpha\tilde l\). Also recall that \newline \(s|\tilde s=\begin{cases}n/l& \text{ if \(n/l\) is even}\\
															2n/l&\text{ if \(n/l\) is odd}\end{cases}\).

We now again start by considering the case of \(n\) an even number. From the definition of \(\tilde l\) we then find that \(s=n/\tilde l=\alpha n/l\). If the quotient \(n/l\) is even then since \(s|\tilde s\) we have \(\dfrac{\alpha n}{l}\left|\right.\dfrac{n}{l}\) yielding \(\alpha=1\) and \(n=sl\). If however the quotient \(n/l\) we find that since \(s|\tilde s\) then \(\dfrac{\alpha n}{l}\left|\right.\dfrac{2n}{l}\) yielding \(\alpha=2\) and \(2n=sl\).

Now consider the case of \(n\) being an odd number, in which case we have \(\tilde l=2n/s\). Then the quotient \(n/l\) can not be an even number, hence there is only one case to consider which is \(n/l\) an odd number. In that case we have as above that \(s|\tilde s\) and hence \(\dfrac{2\alpha n}{l}\left|\right.\dfrac{2n}{l}\) yielding again \(\alpha=1\) and \(2n=sl\) which concludes the proof. 
\end{proof}

\subsection{Summary of result of type \(D_n\)}

We give a brief recount of the result obtained for type \(D_n\).

\begin{theorem}\label{sumD}
Let \(T\) be a \(2\)-cluster tilting object of \(\mathcal C_2(D_n)\) and \(s\) the smallest positive integer such that \(\tau^sT=T\). Then 
\begin{itemize}
\item If \(T\) is of subtype 1 then \(s=n\). 
\item If \(n\) is even and \(T\) is of subtype 2 then \(s=n/2\) if and only if \(T\) is \(2\)-symmetric of subtype 2 and \(4|n\). For all other cases where \(n\) is even we have \(s=n\).
\item If \(n\) is odd and \(T\) is of subtype 2 then \(s=2n\) for all \(T\). 
\item If \(T\) is of subtype \(3\) and \(l\) symmetric then \(s=\begin{cases} n/l& \text{ for \(n/l\) even }\\
2n/l & \text{for \(n/l\) odd }\end{cases}\).
\end{itemize}
\end{theorem}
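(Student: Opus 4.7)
The plan is to present this statement essentially as a collation of the per-subtype theorems proved earlier in the section, since each of the four bullet points matches one case that has already been handled individually. The proof proposal is therefore a short bookkeeping argument rather than new mathematics; I would organize it by walking through the subtypes in the order listed.

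First I would recall that by the classification of quivers of cluster tilted algebras of type $D_n$ from Section \ref{quivers}, every $2$-cluster tilting object $T$ of $\mathcal C_2(D_n)$ falls into exactly one of the three subtypes, so the four bullet points of the theorem exhaust all cases. Then for subtype 1, I would simply invoke the subtype 1 theorem which already states that the smallest positive $s$ with $\tau^s T = T$ equals $n$. For subtype 2, I would split into the even and odd cases: the subtype 2 theorem directly gives $s = n/2$ exactly when $T$ is $2$-symmetric and $4 \mid n$, with $s = n$ otherwise in the even case, and $s = 2n$ in the odd case. For subtype 3, I would appeal to Theorem \ref{mainD}, which asserts precisely that if $T$ is $l$-symmetric then the minimal $s$ is $n/l$ when $n/l$ is even and $2n/l$ when $n/l$ is odd.

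The only subtle point, and the one I would be most careful about, is making sure that the statement of Theorem \ref{sumD} for subtype 3 implicitly uses the maximal $l$ in the definition of $l$-symmetric (Definition \ref{msym}). Since that definition is built into $l$-symmetric, there is nothing further to prove — Theorem \ref{mainD} already gives the claim for this maximal $l$. I would also note in passing that by Lemma \ref{factor2}, $s$ is even in subtypes 2 and 3, which is consistent with each of the formulas $n/2$, $n/l$, $2n/l$ obtained (using that $n/l$ is treated by parity in the subtype 3 case).

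Putting these pieces together, the proof reduces to four one-line references: bullet 1 to the subtype 1 theorem, bullets 2 and 3 to the subtype 2 theorem, and bullet 4 to Theorem \ref{mainD}. There is no main obstacle to overcome; the only risk is a citation slip, i.e.\ failing to match each bullet to its already-proved counterpart, so I would write the proof as a short enumeration that cites each referenced theorem explicitly.
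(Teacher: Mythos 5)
Your proposal matches the paper exactly: Theorem \ref{sumD} is explicitly introduced there as ``a brief recount of the result obtained for type $D_n$'' and is given no separate proof, being precisely the collation of the subtype~1, subtype~2, and subtype~3 (Theorem \ref{mainD}) results that you cite. Your additional remarks on exhaustiveness of the three subtypes and consistency with Lemma \ref{factor2} are correct and harmless.
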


Based on theorem \ref{sumD} we are able to give the following corollary, giving an overview of all triangulated orbit categories \(\mathcal O_F(D_n)\) containing a \(2\)-cluster tilting object.

\begin{corollary}\label{overviewD}
Let \(\mathcal O_F(D_n)=\mathcal D^b(D_n)/F\) be a triangulated orbit category with \(n\geq 4\). Then \(\mathcal O_F(D_n)\) has a \(2\)-cluster tilting object if and only if \(F\) is one of the following functors:
\begin{itemize}
\item \(\tau^{tn}\) for odd \(n\geq 5\) and \(t\in\mathbb Z\)
\item \(\tau^{tn-1}\left[1\right]\) for odd \(n\geq 5\) and \(t\in\mathbb Z\)
\item \(\tau^{2t}\) for all \(n\geq4\) and \(t\in\mathbb Z\).
\item \(\tau^{2t-1}\left[1\right]\) for odd \(n\geq 5\) and \(t\in\mathbb Z\).
\end{itemize}
\end{corollary}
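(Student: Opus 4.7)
The plan is to mirror the proof of Corollary \ref{overviewA}: combine the periodicity classification in Theorem \ref{sumD} with the orbit-category machinery of Lemmas \ref{PreimCltilt}, \ref{ImCltilt} and \ref{periodInDerived}, while carefully treating the parity-dependent behaviour of the AR-quiver of \(\mathcal D^b(D_n)\).

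Suppose \(\mathcal O_F(D_n)\) contains a \(2\)-cluster tilting object \(\mathcal Y\). Lemma \ref{PreimCltilt} produces an \(F\)-periodic \(2\)-cluster tilting subcategory \(\mathcal X=\pi_F^{-1}(\mathcal Y)\) of \(\mathcal D^b(D_n)\), and Lemma \ref{periodicity1} shows that \(\mathcal X\) is also \(\mathbb S_2\)-periodic with \(\mathbb S_2=\tau[-1]\). Corollary \ref{correspondenceDbOgCm} and Lemma \ref{periodInDerived} transport the \(\tau^{s_0}\)-periodicity between \(\mathcal D^b(D_n)\) and \(\mathcal C_2(D_n)\), where Theorem \ref{sumD} gives the full list of possible minimal periods \(s_0\). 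The stabiliser of \(\mathcal X\) inside \(\langle\tau,[1]\rangle\) is therefore generated by \(\tau^{s_0}\) and \(\mathbb S_2\), producing two candidate families \(F=\tau^{ts_0}\) and \(F=\tau^{ts_0-1}[1]\) for each realised \(s_0\); Lemma \ref{ImCltilt} supplies the converse direction once a witnessing \(\mathcal X\) is exhibited.

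It remains to collect the \(s_0\) values from Theorem \ref{sumD} and to determine when the \([1]\)-twisted families merge with the untwisted ones. Subtype 1 contributes \(s_0=n\); subtype 2 gives \(s_0\in\{n/2,\,n,\,2n\}\) depending on \(2\)-symmetry and parity; subtype 3 gives \(s_0=n/l\) or \(2n/l\) for \(l\)-symmetric examples. Since Lemma \ref{factor2} forces \(s_0\) to be even in subtypes 2 and 3, and a sufficiently \(l\)-symmetric subtype 3 witness realises the minimum \(s_0=2\) for every \(n\ge 4\), the collection of even periods is absorbed into the single family \(\tau^{2t}\); the odd period \(s_0=n\) from subtype 1 produces \(\tau^{tn}\) and is genuinely new only when \(n\) is odd. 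The parity of \(n\) then governs the twisted families: for even \(n\), the suspension \([1]\) preserves each \(\tau\)-orbit of \(\mathcal D^b(D_n)\), and combining the Serre-duality identity \(\mathbb S=\tau[1]\) with the finite-order behaviour of \(\mathbb S\) modulo shifts in type \(D\) shows that \([1]\) acts as a power of \(\tau\), so every \(\tau^{ts_0-1}[1]\) reduces to some \(\tau^{2t'}\) and the twisted families are redundant; for odd \(n\), \([1]\) interchanges the two \(\tau\)-orbits of \(\alpha\)-objects of \(\mathcal D^b(D_n)\) and is not a power of \(\tau\), so the twisted families \(\tau^{tn-1}[1]\) and \(\tau^{2t-1}[1]\) survive as distinct functors. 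This yields exactly the four families in the statement.

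The main obstacle is the existence step: constructing \(l\)-symmetric subtype 3 \(2\)-cluster tilting objects realising the minimal period \(s_0=2\) for every \(n\ge 4\), which is what uniformly populates the \(\tau^{2t}\)-family, together with verifying the identification of \([1]\) with a power of \(\tau\) for even \(n\) that collapses the twisted family. Completeness -- that no further \(F\) can arise -- then follows from the exhaustive nature of Theorem \ref{sumD}.
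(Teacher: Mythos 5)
Your proposal is correct and follows essentially the same route as the paper: it draws the list of minimal periods from Theorem \ref{sumD}, uses the subtype 3 object whose quiver is an $n$-cycle to realise the minimal even period $s_0=2$ (so that $\tau^{2t}$ absorbs all even periods), keeps subtype 1 with $s_0=n$ as the only genuinely new family for odd $n$, and obtains both directions from Lemmas \ref{ImCltilt}, \ref{PreimCltilt} and \ref{periodInDerived}. The only cosmetic difference is that you justify the collapse of the $[1]$-twisted families for even $n$ by writing $[1]$ as a power of $\tau$, whereas the paper phrases the same dichotomy in terms of the cylindrical versus Moebius shape of the AR-quiver of $\mathcal O_F$.
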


\begin{proof}
For even values of \(n\) there is only one type of AR-quiver of \(\mathcal O_F\) to consider, which is cylindrical. However for odd \(n\) there are two possibilities, one is cylindrical and the other is cylindrical for \(\beta\)-objects with a Moebius-twist in the top two \(\tau\)-orbits consisting of the \(\alpha\)-objects. 

From the last part of theorem \ref{sumD} it follows that for both even and odd \(n\) the \(2\)-cluster tilting object where the corresponding quiver of \(\End_{\mathcal C}(T)^{op}\) is an \(n\)-cycle is periodic under \(\tau^2\). Since \(2\) is the smallest even natural number we get the last two parts of the corollary by applying lemma \ref{ImCltilt} and lemma \ref{periodInDerived}. These cases also cover all categories having a \(2\)-cluster tilting object due to the second part of theorem \ref{sumD}. 

The last part to consider is the first part of theorem \ref{sumD}, which gives rise to the first two parts of this corollary using lemma \ref{ImCltilt} and lemma \ref{periodInDerived}. It is only necessary to consider the cases for odd \(n\) as all cases of even \(n\) is considered in the third part of the corollary. 

\end{proof}


\section{$2$-cluster tilting subcategories of type $E$}\label{sectE}
In this section we focus on the cases, \(E_6, E_7\) and \(E_8\). We treat each case in its separate subsection . In each case we will use the properties of the AR-quiver of the \(2\)-cluster category to study the periodic properties, as in the two previous sections. 

\subsection{Type \(E_6\)}
The AR-quiver of \(\mathcal C_2(E_6)\) is illustrated in figure \ref{ARquiverE6}. It has a Moebius shape, the two outermost \(\tau\)-orbits each contain 14 objects, whereas the two innermost \(\tau\)-orbits contains only \(7\) objects each. Hence for any object \(X\) of \(\mathcal C_2(E_6)\) we have that \(\tau^{14}X=X\). From this information we deduce that other possible values of \(s\) such that \(\tau^s\mathcal T=\mathcal T\) for a \(2\)-cluster tilting subcategory of \(\mathcal C_2(E_6)\) are \(s=2\) or \(s=7\).

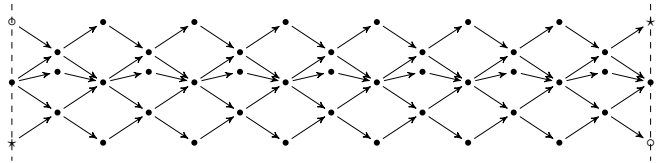
\begin{figure}[h]
\[ \scalebox{.8}{ \begin{tikzpicture}[scale=.5,yscale=-1]
 \foreach \x in {0,...,7}
  \foreach \y in {1,3,5}
   \node (\y-\x) at (\x*3,\y) [vertex] {};
 \foreach \x in {0,...,6}
  \foreach \y in {2,4}
   \node (\y-\x) at (\x*3+1.5,\y) [vertex] {};
 \foreach \x in {0,...,6}
  \foreach \y in {3}
   \node (a-\x) at (\x*3+1.5,\y-0.35)[vertex] {};
  
  \replacevertex{(1-0)}{[tvertex] {\(\circ\)}}
  \replacevertex{(5-7)}{[tvertex] {\(\circ\)}} 
  \replacevertex{(5-0)}{[tvertex]{\(\star\)}}
  \replacevertex{(1-7)}{[tvertex]{\(\star\)}}
 \foreach \xa/\xb in {0/1,1/2,2/3,3/4,4/5,5/6,6/7}
  \foreach \ya/\yb in {1/2,3/2,3/4,5/4}
   {
    \draw [->] (\ya-\xa) -- (\yb-\xa);
    \draw [->] (\yb-\xa) -- (\ya-\xb);
   }
 \foreach \xa/\xb in {0/1,1/2,2/3,3/4,4/5,5/6,6/7}
  \foreach \ya/\yb in {3/3}
  {
   \draw [->] (\ya-\xa) -- (a-\xa);
   \draw [->]  (a-\xa) -- (\yb-\xb);
  }
 \draw [dashed] (0,0.4) -- (0,5.6); 
 \draw [dashed] (21,0.4) -- (21,5.6);
\end{tikzpicture} } \]\caption{The AR-quiver of \(\mathcal C_2(E_6)\), the objects of same shape on either side are identified.}\label{ARquiverE6}
\end{figure}

Assume first that \(s=2\). If there is a summand of \(\mathcal T\) in any given \(\tau\)-orbit, then this \(\tau\)-orbit contains either 7 or 14 indecomposables from \(\mathcal T\). This is not possible as \(\mathcal T\) only has 6 indecomposable summands up to isomorphism.

For the case \(s=7\), each \(\tau\)-orbit of length 14 must contain an even number of summands from \(\mathcal T\). If a \(\tau\)-orbit has length 7 it may contain an odd number of indecomposable summands from \(\mathcal T\). Below we have included an illustration for each \(\tau\)-orbit of the AR-quiver of \(\mathcal C_2(E_6)\). Each diagram shows in grey the indecomposable objects that can not be part of the same \(2\)-cluster tilting object as \(M\). 

\begin{equation}
\label{E6_P1}
\scalebox{.8}{ \begin{tikzpicture}[baseline=-1.7cm,scale=.5,yscale=-1] 
\fill [fill1] (0,0.5)-- (21,0.5)--(21, 5.5)--(0,5.5)--cycle;

\fill [fill0] (9,0.5)--(10.5,1.5)--(12,2.5)--(13.5,3.5)--(15,4.5)--(16.5,5.5)--(1.5,5.5)--cycle;
\fill[fill0] (0,0.5)--(4.5,0.5)--(1.5,2.5)--(0,1.5)--cycle;
\fill[fill0] (13.5,0.5)--(19.5,0.5)--(16.5,2.5)--cycle;
\fill[fill0] (18.75,3)--(19.5,2.5)--(20.25,3)--(19.5,3.5)--cycle;
\fill[fill0] (12.75,3)--(13.5,2.5)--(14.25,3)--(13.5,3.5)--cycle;
\fill[fill0] (3.75,3)--(4.5,2.5)--(5.25,3)--(4.5,3.5)--cycle;
\fill[fill0] (19.5,5.5)--(21,4.5)--(21,5.5)--cycle;
 \foreach \x in {0,...,7}
  \foreach \y in {1,3,5}
   \node (\y-\x) at (\x*3,\y) [vertex] {};
 \foreach \x in {0,...,6}
  \foreach \y in {2,4}
   \node (\y-\x) at (\x*3+1.5,\y) [vertex] {};
 \foreach \x in {0,...,6}
  \foreach \y in {3}
   \node (a-\x) at (\x*3+1.5,\y)[vertex] {};
  
   \replacevertex{(1-3)}{[tvertex][fill1] {$M$}}  
 \foreach \xa/\xb in {0/1,1/2,2/3,3/4,4/5,5/6,6/7}
  \foreach \ya/\yb in {1/2,3/2,3/4,5/4}
   {
    \draw [->] (\ya-\xa) -- (\yb-\xa);
    \draw [->] (\yb-\xa) -- (\ya-\xb);
   }
 \foreach \xa/\xb in {0/1,1/2,2/3,3/4,4/5,5/6,6/7}
  \foreach \ya/\yb in {3/3}
  {
   \draw [->] (\ya-\xa) -- (a-\xa);
   \draw [->]  (a-\xa) -- (\yb-\xb);
  }
 \draw [dashed] (0,0.4) -- (0,5.6); 
 \draw [dashed] (21,0.4) -- (21,5.6);
\end{tikzpicture} } 
 \end{equation}

%
%
\begin{equation}
\label{E6_P2}
\scalebox{.8}{ \begin{tikzpicture}[baseline=-1.7cm,scale=.5,yscale=-1]
\fill [fill1] (0,0.5)-- (21,0.5)--(21, 5.5)--(0,5.5)--cycle;
\fill [fill0] (4.5,1.5)--(10.5,5.5)--(0,5.5)--(0,4.5)--cycle;
\fill[fill0](1.5,0.5)--(7.5,0.5)--(4.5,2.5)--cycle;
\fill[fill0](0.75,3)--(1.5,2.5)--(2.25,3)--(1.5,3.5)--cycle;
\fill[fill0](6.75,3)--(7.5,2.5)--(8.25,3)--(7.5,3.5)--cycle;
\fill[fill0](12,1.5)--(13.5,0.5)--(10.5,0.5)--cycle;
\fill[fill0](16.5,5.5)--(18,4.5)--(19.5,5.5)--cycle;
 \foreach \x in {0,...,7}
  \foreach \y in {1,3,5}
   \node (\y-\x) at (\x*3,\y) [vertex] {};
 \foreach \x in {0,...,6}
  \foreach \y in {2,4}
   \node (\y-\x) at (\x*3+1.5,\y) [vertex] {};
 \foreach \x in {0,...,6}
  \foreach \y in {3}
   \node (a-\x) at (\x*3+1.5,\y)[vertex] {};
   
   \replacevertex{(2-1)}{[tvertex][fill1] {$M$}} 
 \foreach \xa/\xb in {0/1,1/2,2/3,3/4,4/5,5/6,6/7}
  \foreach \ya/\yb in {1/2,3/2,3/4,5/4}
   {
    \draw [->] (\ya-\xa) -- (\yb-\xa);
    \draw [->] (\yb-\xa) -- (\ya-\xb);
   }
 \foreach \xa/\xb in {0/1,1/2,2/3,3/4,4/5,5/6,6/7}
  \foreach \ya/\yb in {3/3}
  {
   \draw [->] (\ya-\xa) -- (a-\xa);
   \draw [->]  (a-\xa) -- (\yb-\xb);
  }
 \draw [dashed] (0,0.4) -- (0,5.6); 
 \draw [dashed] (21,0.4) -- (21,5.6);
\end{tikzpicture} } 
\end{equation}
%

%
\begin{equation}
\label{E6_P3}
\scalebox{.8}{ \begin{tikzpicture}[baseline=-1.7cm,scale=.5,yscale=-1]
\fill [fill1] (0,0.5)-- (21,0.5)--(21, 5.5)--(0,5.5)--cycle;
\fill [fill0] (1.5,0.5)--(10.5,0.5)--(6,3.5)--cycle;
\fill[fill0](1.5,5.5)--(6,2.5)--(10.5,5.5)--cycle;
\fill[fill0](6.75,3)--(7.5,2.5)--(8.25,3)--(7.5,3.5)--cycle;
\fill[fill0](3.75,3)--(4.5,2.5)--(5.25,3)--(4.5,3.5)--cycle;

 \foreach \x in {0,...,7}
  \foreach \y in {1,3,5}
   \node (\y-\x) at (\x*3,\y) [vertex] {};
 \foreach \x in {0,...,6}
  \foreach \y in {2,4}
   \node (\y-\x) at (\x*3+1.5,\y) [vertex] {};
 \foreach \x in {0,...,6}
  \foreach \y in {3}
   \node (a-\x) at (\x*3+1.5,\y)[vertex] {};
   
   \replacevertex{(3-2)}{[tvertex][fill1] {$M$}} 
 \foreach \xa/\xb in {0/1,1/2,2/3,3/4,4/5,5/6,6/7}
  \foreach \ya/\yb in {1/2,3/2,3/4,5/4}
   {
    \draw [->] (\ya-\xa) -- (\yb-\xa);
    \draw [->] (\yb-\xa) -- (\ya-\xb);
   }
 \foreach \xa/\xb in {0/1,1/2,2/3,3/4,4/5,5/6,6/7}
  \foreach \ya/\yb in {3/3}
  {
   \draw [->] (\ya-\xa) -- (a-\xa);
   \draw [->]  (a-\xa) -- (\yb-\xb);
  }
 \draw [dashed] (0,0.4) -- (0,5.6); 
 \draw [dashed] (21,0.4) -- (21,5.6);
\end{tikzpicture} }
\end{equation}
%

%
\begin{equation}
\label{E6_P7}
 \scalebox{.8}{ \begin{tikzpicture}[baseline=-1.7cm,scale=.5,yscale=-1]
\fill [fill1] (0,0.5)-- (21,0.5)--(21, 5.5)--(0,5.5)--cycle;
\fill [fill0] (1.5,0.5)--(13.5,0.5)--(9.75,3)--(13.5,5.5)--(1.5,5.5)--(5.25,3)--cycle;
\fill[fill0](0.75,3)--(1.5,2.5)--(2.25,3)--(1.5,3.5)--cycle;
\fill[fill0](12.75,3)--(13.5,2.5)--(14.25,3)--(13.5,3.5)--cycle;
\fill[fill0](16.5,0.5)--(18,1.5)--(19.5,0.5)--cycle;
\fill[fill0](16.5,5.5)--(18,4.5)--(19.5,5.5)--cycle;
 \foreach \x in {0,...,7}
  \foreach \y in {1,3,5}
   \node (\y-\x) at (\x*3,\y) [vertex] {};
 \foreach \x in {0,...,6}
  \foreach \y in {2,4}
   \node (\y-\x) at (\x*3+1.5,\y) [vertex] {};
 \foreach \x in {0,...,6}
  \foreach \y in {3}
   \node (a-\x) at (\x*3+1.5,\y)[vertex] {};
   
   \replacevertex{(a-2)}{[tvertex][fill1] {$M$}} 
 \foreach \xa/\xb in {0/1,1/2,2/3,3/4,4/5,5/6,6/7}
  \foreach \ya/\yb in {1/2,3/2,3/4,5/4}
   {
    \draw [->] (\ya-\xa) -- (\yb-\xa);
    \draw [->] (\yb-\xa) -- (\ya-\xb);
   }
 \foreach \xa/\xb in {0/1,1/2,2/3,3/4,4/5,5/6,6/7}
  \foreach \ya/\yb in {3/3}
  {
   \draw [->] (\ya-\xa) -- (a-\xa);
   \draw [->]  (a-\xa) -- (\yb-\xb);
  }
 \draw [dashed] (0,0.4) -- (0,5.6); 
 \draw [dashed] (21,0.4) -- (21,5.6);
\end{tikzpicture} } 
\end{equation}
%

Based on these overviews for each \(\tau\)-orbit, one may use combinations of them together with combinatorial arguments to get a complete classification of all \(2\)-cluster tilting objects of \(\mathcal C_2(E_6)\) that are periodic under \(\tau^7\). An example of a calculation is given in subsection \ref{subsecE8}. 

Before looking at the classification itself, we need a notation to keep track of the distribution of the indecomposables in the AR-quiver. For a \(2\)-cluster tilting object \(T\) of \(\mathcal C_2(E_6)\) that is closed under \(\tau^7\),  we will associate a vector \((a_1,a_2,a_3,a_4)\). The number \(a_1\) will denote the number of indecomposables in the outermost \(\tau\)-orbit. The value of \(a_1\) will be denoted by  \(2\) if there are one pair of indecomposables summands in the outermost \(\tau\)-orbit, or \(2+2\) if there are two pairs. The complete classification is listed in table \ref{tableE6}.


\begin{table}[h]
\centering
\begin{tabular}{|m{2.2cm}|m{1.5cm}|m{9cm}|}
\hline
Distribution in AR-q. & \(\tau^sT=T\) & Corresponding quivers of \(T\) \\ \hline
(2,2,1,1) & s=7&

\scalebox{.7}{\begin{tikzpicture}
\node (1) at (-2,0)[vertex]{};
\node (2) at (-1,0)[vertex]{};
\node (3) at (0,0)[vertex]{};
\node (4) at (0,1)[vertex]{};
\node (5) at (1,0)[vertex]{};
\node (6) at (2,0)[vertex]{};
\node (7) at (0,1.3){};
\node (8) at (3,0){};
\draw[->](1)--(2);
\draw[->](2)--(3);
\draw[->](6)--(5);
\draw[->](5)--(3);
\draw[-](3)--(4);
\end{tikzpicture}\label{(2,2,1,1)_1}}
\scalebox{.7}{\begin{tikzpicture}
\node (1) at (-2,0)[vertex]{};
\node (2) at (-1,0)[vertex]{};
\node (3) at (0,0)[vertex]{};
\node (4) at (0,1)[vertex]{};
\node (5) at (1,0)[vertex]{};
\node (6) at (2,0)[vertex]{};
\node (7) at (0,1.3){};
\draw[->](1)--(2);
\draw[->](3)--(2);
\draw[->](6)--(5);
\draw[->](3)--(5);
\draw[-](3)--(4);
\end{tikzpicture}\label{(2,2,1,1)_2}}

\scalebox{.7}{\begin{tikzpicture}
\node (1) at (-2,0)[vertex]{};
\node (2) at (-1,0)[vertex]{};
\node (3) at (0,0)[vertex]{};
\node (4) at (0,1)[vertex]{};
\node (5) at (1,0)[vertex]{};
\node (6) at (2,0)[vertex]{};
\node (7) at (0,1.3){};
\node (8) at (3,0){};
\draw[->](2)--(1);
\draw[->](2)--(3);
\draw[->](5)--(6);
\draw[->](5)--(3);
\draw[-](3)--(4);
\end{tikzpicture}\label{(2,2,1,1)_3}}
\scalebox{.7}{\begin{tikzpicture}
\node (1) at (-2,0)[vertex]{};
\node (2) at (-1,0)[vertex]{};
\node (3) at (0,0)[vertex]{};
\node (4) at (0,1)[vertex]{};
\node (5) at (1,0)[vertex]{};
\node (6) at (2,0)[vertex]{};
\node (7) at (0,1.3){};
\draw[->](2)--(1);
\draw[->](3)--(2);
\draw[->](5)--(6);
\draw[->](3)--(5);
\draw[-](3)--(4);
\end{tikzpicture}\label{(2,2,1,1)_4}}
\\ \hline
(2+2,0,1,1)&s=7&
\scalebox{.7}{\begin{tikzpicture}
\node (a) at (0,0)[vertex]{};
\node (b) at (2,0)[vertex]{};
\node (c) at (1,1)[vertex]{};
\node (d) at (4,0)[vertex]{};
\node (e) at (3,1)[vertex]{};
\node (f) at (2,1)[vertex]{};
\node (0) at (0,1.3){};
\draw[-](b)--(f);
\draw[->](a)-- (b);
\draw[->](b)--(c);
\draw[->](c)--(a);
\draw[->](d)--(b);
\draw[->](b)--(e);
\draw[->](e)--(d);
\end{tikzpicture}\label{(2+2,0,1,1)_1}}
\\ \hline
(2,2,2,0)&s=7&
\scalebox{.7}{\begin{tikzpicture}
\node(a) at (0,0)[vertex]{};
\node(b) at (1,0)[vertex]{};
\node(c) at (2,1)[vertex]{};
\node(d) at (3,0)[vertex]{};
\node(e) at (2,-1)[vertex]{};
\node(f) at (4,0)[vertex]{};
\node (0) at (0,1.3){};
\draw[-](a)--(b);
\draw[->](b)--(c);
\draw[->](b)--(d);
\draw[->](b)--(e);
\draw[->](e)--(d);
\draw[-](d)--(f);
\draw[->](c)--(d);
\end{tikzpicture}\label{(2+2,0,1,1)}}
\\ \hline
(2+2,0,2,0) & s=7&
\scalebox{.7}{\begin{tikzpicture}
\node(a) at (0,0)[vertex]{};
\node(b) at (0,-1.5)[vertex]{};
\node(c) at (1.5,-1.5)[vertex]{};
\node(d) at (1.5,0)[vertex]{};
\node(e) at (3,-1.5)[vertex]{};
\node(f) at (3,0)[vertex]{};
\draw[->](a)--(d);
\draw[->](f)--(d);
\draw[->](b)--(a);
\draw[->](e)--(f);
\draw[->](c)--(b);
\draw[->](c)--(e);
\draw[->](c)--(d);
\draw[->](d)--(b);
\draw[->](d)--(e);
\end{tikzpicture}\label{(2+2,0,2,0)_1}}
\scalebox{.7}{\begin{tikzpicture}
\node(0) at (0,0.3){};
\node(a) at (0,0)[vertex]{};
\node(b) at (0,-1.5)[vertex]{};
\node(c) at (1.5,-1.5)[vertex]{};
\node(d) at (1.5,0)[vertex]{};
\node(e) at (3,-1.5)[vertex]{};
\node(f) at (3,0)[vertex]{};
\node(00) at (3.3,0){};
\node(000) at (-0.3,0){};
\draw[->](d)--(a);
\draw[->](d)--(f);
\draw[->](a)--(b);
\draw[->](f)--(e);
\draw[->](b)--(c);
\draw[->](e)--(c);
\draw[->](d)--(c);
\draw[->](b)--(d);
\draw[->](e)--(d);
\end{tikzpicture}\label{(2+2,0,2,0)_2}}
\scalebox{.7}{\begin{tikzpicture}
\node(a) at (0,0)[vertex]{};
\node(b) at (0,-1.5)[vertex]{};
\node(c) at (1.5,-1.5)[vertex]{};
\node(d) at (1.5,0)[vertex]{};
\node(e) at (3,-1.5)[vertex]{};
\node(f) at (3,0)[vertex]{};
\draw[->](d)--(a);
\draw[->](d)--(f);
\draw[->](a)--(b);
\draw[->](f)--(e);
\draw[->](b)--(c);
\draw[->](e)--(c);
\draw[->](d)--(c);
\end{tikzpicture}\label{(2+2,0,2,0)_3}}
\\ \hline
\end{tabular}
\centering\caption{All quivers of \(2\)-cluster tilting objects of \(\mathcal C_2(E_6)\) where the corresponding \(2\)-cluster tilting object is periodic under \(\tau^7\). The first column gives the distribution of the indecomposable summands of \(T\) in the \(\tau\)-orbits of the AR-quiver of \(\mathcal C_2(E_6)\). }
\end{table}\label{tableE6}


\subsection{Type \(E_7\)}
The AR-quiver of \(\mathcal C_2(E_7)\) is illustrated in figure \ref{ARQuiverE7}, it has the shape of a cylinder where each \(\tau\)-orbit contains 10 objects. In order to achieve any relation of the type \(\tau^s \mathcal T=\mathcal T\)  for a \(2\)-cluster tilting subcategory \(\mathcal T\) of \(\mathcal C_2(E_7)\), we see that the only possibilities are \(s=2\) or \(s=5\). 

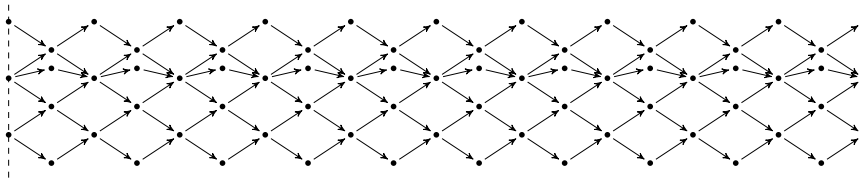
\begin{figure}[h]
 \scalebox{.75}{ \begin{tikzpicture}[baseline=-1.7cm,scale=.5,yscale=-1]
 \foreach \x in {0,...,10}
  \foreach \y in {1,3,5}
   \node (\y-\x) at (\x*3,\y) [vertex] {};
 \foreach \x in {0,...,9}
  \foreach \y in {2,4,6}
   \node (\y-\x) at (\x*3+1.5,\y) [vertex] {};
 \foreach \x in {0,...,9}
  \foreach \y in {3}
   \node (a-\x) at (\x*3+1.5,\y-0.35) [vertex] {};
 \foreach \xa/\xb in {0/1,1/2,2/3,3/4,4/5,5/6,6/7,7/8,8/9,9/10}
  \foreach \ya/\yb in {1/2,3/2,3/4,5/4,5/6}
   {
    \draw [->] (\ya-\xa) -- (\yb-\xa);
    \draw [->] (\yb-\xa) -- (\ya-\xb);
   }
   \foreach \xa/\xb in {0/1,1/2,2/3,3/4,4/5,5/6,6/7,7/8,8/9,9/10}
    \foreach \ya/\yb in {3/3}
    {
     \draw [->] (\ya-\xa)--(a-\xa);
     \draw [->] (a-\xa)--(\yb-\xb);   
    }
 \draw [dashed] (0,0.4) -- (0,6.6); 
 \draw [dashed] (30,0.4) -- (30,6.6);
\end{tikzpicture} }\caption{The AR-quiver of \(\mathcal C_2(E_7)\).}\label{ARQuiverE7} 
\end{figure}

Assume that \(s=2\). In that case each \(\tau\)-orbit containing at least one indecomposable summand of \(\mathcal T\) contains a number of indecomposables from \(\mathcal T\) divisible by 5. However any \(2\)-cluster tilting subcategory of \(\mathcal C_2(E_7)\) has exactly \(7\) indecomposable objects up to isomorphism, hence this is an impossible scenario.

Now assume that \(s=5\). Then all \(\tau\)-orbits containing at least one indecomposable summand of \(\mathcal T\), contains an even number of summands of \(\mathcal T\). This can not happen as \(7\) is odd. Hence we have the following result:

\begin{theorem}
If \(\mathcal T\) is any \(2\)-cluster tilting subcategory of \(\mathcal C_2(E_7)\), then the smallest value of \(s\) such that \(\tau^s\mathcal T=\mathcal T\) is \(s=10\).
\end{theorem}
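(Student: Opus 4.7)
The plan is to show that $s = 10$ is minimal by ruling out all proper divisors of $10$, since $\tau^{10}$ acts as the identity on every indecomposable (each $\tau$-orbit of the AR-quiver of $\mathcal{C}_2(E_7)$ has length $10$) and hence any minimal period $s$ with $\tau^s \mathcal{T} = \mathcal{T}$ must divide $10$. The candidates are therefore $s \in \{1,2,5,10\}$.

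The case $s = 1$ is immediate: no indecomposable object of $\mathcal{C}_2(E_7)$ satisfies $\tau X = X$, since each $\tau$-orbit has length $10$, so $\tau \mathcal{T} = \mathcal{T}$ would force $\tau T_i = T_i$ for some summand and yield a contradiction. The cases $s = 2$ and $s = 5$ are exactly the calculations laid out in the two paragraphs preceding the theorem, so I will simply transcribe them into the proof. For $s = 2$: any $\tau$-orbit of length $10$ which meets $\mathcal{T}$ must then meet it in a number of indecomposable summands divisible by $10/2 = 5$, so the total number of indecomposable summands is a multiple of $5$; but every $2$-cluster tilting object in $\mathcal{C}_2(E_7)$ has exactly $7$ indecomposable summands, contradiction. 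For $s = 5$: each $\tau$-orbit meeting $\mathcal{T}$ must contain a multiple of $10/5 = 2$ indecomposable summands of $\mathcal{T}$, again contradicting the fact that the total number of summands is $7$, which is odd.

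Combining these three exclusions with the automatic bound $\tau^{10}\mathcal{T} = \mathcal{T}$ concludes the proof. There is no genuine obstacle here; the argument is purely a divisibility/counting check, and the two substantive cases ($s=2$ and $s=5$) have already been written out in the text immediately above the theorem statement, so the theorem is best stated as a direct corollary of that discussion together with the trivial case $s=1$. The only mild subtlety is noting explicitly that the rank of $E_7$ forces exactly $7$ indecomposable summands in any $2$-cluster tilting object of $\mathcal{C}_2(E_7)$, which is what blocks both nontrivial divisors.
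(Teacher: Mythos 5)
Your proposal is correct and follows essentially the same route as the paper: the paper likewise reduces to the divisors $s=2$ and $s=5$ of the $\tau$-orbit length $10$ and eliminates each by the same counting argument (multiples of $5$, respectively even numbers, of summands per orbit versus the $7$ indecomposable summands of a $2$-cluster tilting object of $\mathcal C_2(E_7)$). The only cosmetic difference is that you also dispose of $s=1$ explicitly, which the paper leaves implicit.
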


\subsection{Type \(E_8\)}\label{subsecE8} The AR-quiver of \(\mathcal C_2(E_8)\) has \(8\) \(\tau\)-orbits, each with \(16\) indecomposable objects, as illustrated in figure \ref{ARQuiverE8}. If \(T\) is a \(2\)-cluster tilting object of \(\mathcal C_2(E_8)\) such that \(\tau^sT=T\) with \(s<16\), then \(s\in\{2,4,8\}\). Below we have included for each \(\tau\)-orbit, a figure where the grey area shows which indecomposables that can not be part of the same \(2\)-cluster tilting object as \(M\). By counting in each figure below, one may verify that it is not possible to have \(\tau^2T=T\) for a \(2\)-cluster tilting object in \(\mathcal C_2(E_8)\).

Furthermore, if \(T\) is a \(2\)-cluster tilting object such that \(\tau^4T=T\), then the only \(\tau\)-orbits that can contain indecomposable summands of \(T\) are the two bottom \(\tau\)-orbits in figure \ref{ARQuiverE8}. 

Finally we study the case when \(s=8\). Again we turn to the 8 figures displaying the ext-support of an object of each \(\tau\)-orbit. From these figures we see that if \(\tau^8T=T\) for a \(2\)-cluster tilting object, then the indecomposable objects of \(T\) may only occur in the two bottom \(\tau\)-orbits, or in the top \(\tau\)-orbit.

\begin{figure}
\scalebox{.75}{ \begin{tikzpicture}[scale=.5,yscale=-1]
 \foreach \x in {0,...,16}
  \foreach \y in {1,3,5,7}
   \node (\y-\x) at (\x*2,\y) [vertex] {};
 \foreach \x in {0,...,15}
  \foreach \y in {2,4,6}
   \node (\y-\x) at (\x*2+1,\y) [vertex] {};
 \foreach \x in {0,...,15}
  \foreach \y in {3}
   \node (a-\x) at (\x*2+1,\y) [vertex] {}; 
 \foreach \xa/\xb in {0/1,1/2,2/3,3/4,4/5,5/6,6/7,7/8,8/9,9/10,10/11,11/12,12/13,13/14,14/15,15/16}
  \foreach \ya/\yb in {1/2,3/2,3/4,5/4,5/6,7/6}
   {
    \draw [->] (\ya-\xa) -- (\yb-\xa);
    \draw [->] (\yb-\xa) -- (\ya-\xb);
   }
 \foreach \xa/\xb in {0/1,1/2,2/3,3/4,4/5,5/6,6/7,7/8,8/9,9/10,10/11,11/12,12/13,13/14,14/15,15/16}
    \foreach \ya/\yb in {3/3}
   {
     \draw [->] (\ya-\xa)--(a-\xa);
     \draw [->] (a-\xa) --(\ya-\xb);
   }     
 \draw [dashed] (0,0.4) -- (0,7.6); 
 \draw [dashed] (32,0.4) -- (32,7.6);
\end{tikzpicture} }\caption{The AR-quiver of \(\mathcal C_2(E_8)\).}\label{ARQuiverE8}
\end{figure}
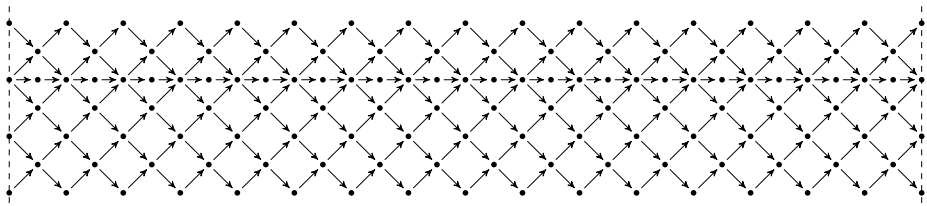


\begin{equation}
 \scalebox{.7}{ \begin{tikzpicture}[baseline=-1.7cm,scale=.49,yscale=-1]\label{E8_P1}
    \fill[fill1] (0,0.5)--(32,0.5)--(32,7.5)--(0,7.5)--cycle;
 \fill[fill0] (0,7.5)--(0,6.5)--(6,0.5)--(13,7.5)--cycle;
 \fill[fill0] (9,0.5)--(11,2.5)--(13,0.5)--cycle;
 \fill[fill0] (0,0.5)--(0,1.5)--(1,2.5)--(3,0.5)--cycle;
 \fill[fill0] (31,0.5)--(32,1.5)--(32,0.5)--cycle;
 \fill[fill0] (31,7.5)--(32,6.5)--(32,7.5)--cycle; 
 \fill[fill0] (2.5,3)--(3,2.5)--(3.5,3)--(3,3.5)--cycle;
 \fill[fill0] (12.5,3)--(13,2.5)--(13.5,3)--(13,3.5)--cycle; 
 \fill[fill0] (30.5,3)--(31,2.5)--(31.5,3)--(31,3.5)--cycle;
 \fill[fill0] (8.5,3)--(9,2.5)--(9.5,3)--(9,3.5)--cycle; 
 \fill[fill0] (15,7.5)--(17,5.5)--(19,7.5)--cycle;
 \fill[fill0] (25,7.5)--(27,5.5)--(29,7.5)--cycle;
 \fill[fill0] (15,0.5)--(16,1.5)--(17,0.5)--cycle;
 \fill[fill0](21,0.5)--(22,1.5)--(23,0.5)--cycle;
 \fill[fill0] (27,0.5)--(28,01.5)--(29,0.5)--cycle; 
 \fill[fill0] (21,7.5)--(22,6.5)--(23,7.5)--cycle; 
 \foreach \x in {0,...,16}
  \foreach \y in {1,3,5,7}
   \node (\y-\x) at (\x*2,\y) [vertex] {};
 \foreach \x in {0,...,15}
  \foreach \y in {2,4,6}
   \node (\y-\x) at (\x*2+1,\y) [vertex] {};
 \foreach \x in {0,...,15}
  \foreach \y in {3}
   \node (a-\x) at (\x*2+1,\y) [vertex] {}; 
   \replacevertex{(1-3)}{[tvertex][fill1] {$M$}} 
 \foreach \xa/\xb in {0/1,1/2,2/3,3/4,4/5,5/6,6/7,7/8,8/9,9/10,10/11,11/12,12/13,13/14,14/15,15/16}
  \foreach \ya/\yb in {1/2,3/2,3/4,5/4,5/6,7/6}
   {
    \draw [->] (\ya-\xa) -- (\yb-\xa);
    \draw [->] (\yb-\xa) -- (\ya-\xb);
   }
 \foreach \xa/\xb in {0/1,1/2,2/3,3/4,4/5,5/6,6/7,7/8,8/9,9/10,10/11,11/12,12/13,13/14,14/15,15/16}
    \foreach \ya/\yb in {3/3}
   {
     \draw [->] (\ya-\xa)--(a-\xa);
     \draw [->] (a-\xa) --(\ya-\xb);
   }      
 \draw [dashed] (0,0.4) -- (0,7.6); 
 \draw [dashed] (32,0.4) -- (32,7.6);
\end{tikzpicture} } 
\end{equation}

\begin{equation}
 \scalebox{.7}{ \begin{tikzpicture}[baseline=-1.7cm,scale=.49,yscale=-1]\label{E8_P2}
    \fill[fill1] (0,0.5)--(32,0.5)--(32,7.5)--(0,7.5)--cycle;
 \fill[fill0] (3,0.5)--(5,2.5)--(7,0.5)--cycle;
 \fill[fill0] (0,7.5)--(0,6.5)--(5,1.5)--(11,7.5)--cycle;
 \fill[fill0] (0,0.5)--(1,0.5)--(0,1.5)--cycle;
 \fill[fill0] (31,0.5)--(32,1.5)--(32,0.5)--cycle;
  \fill[fill0] (31,7.5)--(32,6.5)--(32,7.5)--cycle; 
 \fill[fill0] (2.5,3)--(3,2.5)--(3.5,3)--(3,3.5)--cycle;
 \fill[fill0] (6.5,3)--(7,2.5)--(7.5,3)--(7,3.5)--cycle;
 \fill[fill0] (15,7.5)--(16,6.5)--(17,7.5)--(16,7.5)--cycle;
 \fill[fill0] (25,7.5)--(26,6.5)--(26.5,7)--(27,7.5)--cycle;
 \fill[fill0] (9,0.5)--(10,1.5)--(11,0.5)--cycle; 
 \foreach \x in {0,...,16}
  \foreach \y in {1,3,5,7}
   \node (\y-\x) at (\x*2,\y) [vertex] {};
 \foreach \x in {0,...,15}
  \foreach \y in {2,4,6}
   \node (\y-\x) at (\x*2+1,\y) [vertex] {};
 \foreach \x in {0,...,15}
  \foreach \y in {3}
   \node (a-\x) at (\x*2+1,\y) [vertex] {};
   
   \replacevertex{(2-2)}{[tvertex][fill1] {$M$}} 
 \foreach \xa/\xb in {0/1,1/2,2/3,3/4,4/5,5/6,6/7,7/8,8/9,9/10,10/11,11/12,12/13,13/14,14/15,15/16}
  \foreach \ya/\yb in {1/2,3/2,3/4,5/4,5/6,7/6}
   {
    \draw [->] (\ya-\xa) -- (\yb-\xa);
    \draw [->] (\yb-\xa) -- (\ya-\xb);
   }
 \foreach \xa/\xb in {0/1,1/2,2/3,3/4,4/5,5/6,6/7,7/8,8/9,9/10,10/11,11/12,12/13,13/14,14/15,15/16}
    \foreach \ya/\yb in {3/3}
   {
     \draw [->] (\ya-\xa)--(a-\xa);
     \draw [->] (a-\xa) --(\ya-\xb);
   }     
 \draw [dashed] (0,0.4) -- (0,7.6); 
 \draw [dashed] (32,0.4) -- (32,7.6);
\end{tikzpicture} } 
\end{equation}
%

\begin{equation}
 \scalebox{.7}{ \begin{tikzpicture}[baseline=-1.7cm,scale=.49,yscale=-1]\label{E8_P3}
    \fill[fill1] (0,0.5)--(32,0.5)--(32,7.5)--(0,7.5)--cycle;
 \fill[fill0] (1,0.5)--(4,3.5)--(7,0.5)--cycle; 
 \fill[fill0] (0,7.5)--(0,6.5)--(4,2.5)--(9,7.5)--cycle; 
 \fill[fill0] (31,7.5)--(32,6.5)--(32,7.5)--cycle; 
 \fill[fill0] (2.5,3)--(3,2.5)--(3.5,3)--(3,3.5)--cycle;
 \fill[fill0] (4.5,3)--(5,2.5)--(5.5,3)--(5,3.5)--cycle; 
 \foreach \x in {0,...,16}
  \foreach \y in {1,3,5,7}
   \node (\y-\x) at (\x*2,\y) [vertex] {};
 \foreach \x in {0,...,15}
  \foreach \y in {2,4,6}
   \node (\y-\x) at (\x*2+1,\y) [vertex] {};
 \foreach \x in {0,...,15}
  \foreach \y in {3}
   \node (a-\x) at (\x*2+1,\y) [vertex] {};
   
   \replacevertex{(3-2)}{[tvertex][fill1] {$M$}} 
 \foreach \xa/\xb in {0/1,1/2,2/3,3/4,4/5,5/6,6/7,7/8,8/9,9/10,10/11,11/12,12/13,13/14,14/15,15/16}
  \foreach \ya/\yb in {1/2,3/2,3/4,5/4,5/6,7/6}
   {
    \draw [->] (\ya-\xa) -- (\yb-\xa);
    \draw [->] (\yb-\xa) -- (\ya-\xb);
   }
 \foreach \xa/\xb in {0/1,1/2,2/3,3/4,4/5,5/6,6/7,7/8,8/9,9/10,10/11,11/12,12/13,13/14,14/15,15/16}
    \foreach \ya/\yb in {3/3}
   {
     \draw [->] (\ya-\xa)--(a-\xa);
     \draw [->] (a-\xa) --(\ya-\xb);
   }    
 \draw [dashed] (0,0.4) -- (0,7.6); 
 \draw [dashed] (32,0.4) -- (32,7.6);
\end{tikzpicture} } 
\end{equation}
%

\begin{equation}
 \scalebox{.7}{ \begin{tikzpicture}[baseline=-1.7cm,scale=.49,yscale=-1]
    \fill[fill1] (0,0.5)--(32,0.5)--(32,7.5)--(0,7.5)--cycle;
 \fill[fill0] (3,0.5)--(7,4.5)--(11,0.5)--cycle;
 \fill[fill0] (1,7.5)--(7,1.5)--(13,7.5)--cycle;
 \fill[fill0] (0,0.5)--(1,0.5)--(0,1.5)--cycle;
 \fill[fill0] (31,0.5)--(32,1.5)--(32,0.5)--cycle;
 \fill[fill0] (2.5,3)--(3,2.5)--(3.5,3)--(3,3.5)--cycle;
 \fill[fill0] (10.5,3)--(11,2.5)--(11.5,3)--(11,3.5)--cycle;
 \fill[fill0] (15,7.5)--(16,6.5)--(17,7.5)--cycle;
 \fill[fill0] (19,7.5)--(20,6.5)--(21,7.5)--cycle;
 \fill[fill0] (25,7.5)--(26,6.5)--(27,7.5)--cycle;
 \fill[fill0] (29,7.5)--(30,6.5)--(31,7.5)--cycle;
 \fill[fill0] (13,0.5)--(14,1.5)--(15,0.5)--cycle;
 \foreach \x in {0,...,16}
  \foreach \y in {1,3,5,7}
   \node (\y-\x) at (\x*2,\y) [vertex] {};
 \foreach \x in {0,...,15}
  \foreach \y in {2,4,6}
   \node (\y-\x) at (\x*2+1,\y) [vertex] {};
 \foreach \x in {0,...,15}
  \foreach \y in {3}
   \node (a-\x) at (\x*2+1,\y) [vertex] {};
   
   \replacevertex{(a-3)}{[tvertex][fill1] {$M$}} 
 \foreach \xa/\xb in {0/1,1/2,2/3,3/4,4/5,5/6,6/7,7/8,8/9,9/10,10/11,11/12,12/13,13/14,14/15,15/16}
  \foreach \ya/\yb in {1/2,3/2,3/4,5/4,5/6,7/6}
   {
    \draw [->] (\ya-\xa) -- (\yb-\xa);
    \draw [->] (\yb-\xa) -- (\ya-\xb);
   }
 \foreach \xa/\xb in {0/1,1/2,2/3,3/4,4/5,5/6,6/7,7/8,8/9,9/10,10/11,11/12,12/13,13/14,14/15,15/16}
    \foreach \ya/\yb in {3/3}
   {
     \draw [->] (\ya-\xa)--(a-\xa);
     \draw [->] (a-\xa) --(\ya-\xb);
   }      
 \draw [dashed] (0,0.4) -- (0,7.6); 
 \draw [dashed] (32,0.4) -- (32,7.6);
\end{tikzpicture} } 
\end{equation}\\

\begin{equation}
 \scalebox{.7}{ \begin{tikzpicture}[baseline=-1.7cm,scale=.49,yscale=-1]\label{E8_P4}
    \fill[fill1] (0,0.5)--(32,0.5)--(32,7.5)--(0,7.5)--cycle;
 \fill[fill0] (7,0.5)--(11,4.5)--(15,0.5)--cycle; 
 \fill[fill0] (7,7.5)--(11,3.5)--(15,7.5)--cycle; 
 \fill[fill0] (3,7.5)--(4,6.5)--(4.5,7)--(5,7.5)--cycle; 
 \fill[fill0] (8.5,3)--(9,2.5)--(9.5,3)--(9,3.5)--cycle;
 \fill[fill0] (12.5,3)--(13,2.5)--(13.5,3)--(13,3.5)--cycle;
 \fill[fill0] (17,7.5)--(18,6.5)--(18.5,7)--(19,7.5)--cycle;
 \foreach \x in {0,...,16}
  \foreach \y in {1,3,5,7}
   \node (\y-\x) at (\x*2,\y) [vertex] {};
 \foreach \x in {0,...,15}
  \foreach \y in {2,4,6}
   \node (\y-\x) at (\x*2+1,\y) [vertex] {};
 \foreach \x in {0,...,15}
  \foreach \y in {3}
   \node (a-\x) at (\x*2+1,\y) [vertex] {}; 
   
   \replacevertex{(4-5)}{[tvertex][fill1] {$M$}}
 \foreach \xa/\xb in {0/1,1/2,2/3,3/4,4/5,5/6,6/7,7/8,8/9,9/10,10/11,11/12,12/13,13/14,14/15,15/16}
  \foreach \ya/\yb in {1/2,3/2,3/4,5/4,5/6,7/6}
   {
    \draw [->] (\ya-\xa) -- (\yb-\xa);
    \draw [->] (\yb-\xa) -- (\ya-\xb);
   }
 \foreach \xa/\xb in {0/1,1/2,2/3,3/4,4/5,5/6,6/7,7/8,8/9,9/10,10/11,11/12,12/13,13/14,14/15,15/16}
    \foreach \ya/\yb in {3/3}
   {
     \draw [->] (\ya-\xa)--(a-\xa);
     \draw [->] (a-\xa) --(\ya-\xb);
   }     
 \draw [dashed] (0,0.4) -- (0,7.6); 
 \draw [dashed] (32,0.4) -- (32,7.6);
\end{tikzpicture} } 
\end{equation}

\begin{equation}
 \scalebox{.7}{ \begin{tikzpicture}[baseline=-1.7cm,scale=.49,yscale=-1]\label{E8_P5}
   \fill[fill1] (0,0.5)--(32,0.5)--(32,7.5)--(0,7.5)--cycle;
 \fill[fill0] (3,7.5)--(5,5.5)--(7,7.5)--cycle;
 \fill[fill0] (9,7.5)--(12,4.5)--(15,7.5)--cycle;
 \fill[fill0] (7,0.5)--(12,5.5)--(17,0.5)--cycle;
 \fill[fill0] (17,7.5)--(19,5.5)--(21,7.5)--cycle;
  \fill[fill0] (27.5,7)--(28,6.5)--(29,7.5)--(27,7.5)--cycle;
  \fill[fill0] (8.5,3)--(9,2.5)--(9.5,3)--(9,3.5)--cycle; 
  \fill[fill0] (14.5,3)--(15,2.5)--(15.5,3)--(15,3.5)--cycle;
 \foreach \x in {0,...,16}
  \foreach \y in {1,3,5,7}
   \node (\y-\x) at (\x*2,\y) [vertex] {};
 \foreach \x in {0,...,15}
  \foreach \y in {2,4,6}
   \node (\y-\x) at (\x*2+1,\y) [vertex] {};
 \foreach \x in {0,...,15}
  \foreach \y in {3}
   \node (a-\x) at (\x*2+1,\y) [vertex] {};
   
   \replacevertex{(5-6)}{[tvertex][fill1] {$M$}}
 \foreach \xa/\xb in {0/1,1/2,2/3,3/4,4/5,5/6,6/7,7/8,8/9,9/10,10/11,11/12,12/13,13/14,14/15,15/16}
  \foreach \ya/\yb in {1/2,3/2,3/4,5/4,5/6,7/6}
   {
    \draw [->] (\ya-\xa) -- (\yb-\xa);
    \draw [->] (\yb-\xa) -- (\ya-\xb);
   }
 \foreach \xa/\xb in {0/1,1/2,2/3,3/4,4/5,5/6,6/7,7/8,8/9,9/10,10/11,11/12,12/13,13/14,14/15,15/16}
    \foreach \ya/\yb in {3/3}
   {
     \draw [->] (\ya-\xa)--(a-\xa);
     \draw [->] (a-\xa) --(\ya-\xb);
   }     
 \draw [dashed] (0,0.4) -- (0,7.6); 
 \draw [dashed] (32,0.4) -- (32,7.6);
\end{tikzpicture} } 
\end{equation}

\begin{equation}
 \scalebox{.7}{ \begin{tikzpicture}[baseline=-1.7cm,scale=.49,yscale=-1]
\fill[fill1](2,6.5)--(3,7.5)--(5,7.5)--(8,4.5)--(11,7.5)--(15,7.5)--(17,5.5)--(19,7.5)--(23,7.5)--(26,4.5)--(29,7.5)--(31,7.5)--(32,6.5)--(26,0.5)--(23,0.5)--(22,1.5)--(21,0.5)--(13,0.5)--(12,1.5)--(11,0.5)--(8,0.5)--cycle;

 \foreach \x in {0,...,16}
  \foreach \y in {1,3,5,7}
   \node (\y-\x) at (\x*2,\y) [vertex] {};
 \foreach \x in {0,...,15}
  \foreach \y in {2,4,6}
   \node (\y-\x) at (\x*2+1,\y) [vertex] {};
 \foreach \x in {0,...,15}
  \foreach \y in {3}
   \node (a-\x) at (\x*2+1,\y) [vertex] {};
   
 \replacevertex{(1,6)}{[tvertex][fill1] {$M$}} 
 \foreach \xa/\xb in {0/1,1/2,2/3,3/4,4/5,5/6,6/7,7/8,8/9,9/10,10/11,11/12,12/13,13/14,14/15,15/16}
  \foreach \ya/\yb in {1/2,3/2,3/4,5/4,5/6,7/6}
   {
    \draw [->] (\ya-\xa) -- (\yb-\xa);
    \draw [->] (\yb-\xa) -- (\ya-\xb);
   }
 \foreach \xa/\xb in {0/1,1/2,2/3,3/4,4/5,5/6,6/7,7/8,8/9,9/10,10/11,11/12,12/13,13/14,14/15,15/16}
    \foreach \ya/\yb in {3/3}
   {
     \draw [->] (\ya-\xa)--(a-\xa);
     \draw [->] (a-\xa) --(\ya-\xb);
   }      
 \draw [dashed] (0,0.4) -- (0,7.6); 
 \draw [dashed] (32,0.4) -- (32,7.6);
\end{tikzpicture} } 
\end{equation}\\

\begin{equation}
 \scalebox{.7}{ \begin{tikzpicture}[baseline=-1.7cm,scale=.49,yscale=-1]
 \fill[fill1] (1,7.5)--(3,7.5)--(7,3.5)--(11,7.5)--(13,7.5)--(16,4)--(19,7.5)--(21,7.5)--(25,3.5)--(29,7.5)--(31,7.5)--(24,0.5)--(23,0.5)--(21,2.5)--(19,0.5)--(17,0.5)--(16,1.5)--(15,0.5)--(13,0.5)--(11,2.5)--(9,0.5)--(7.5,0.5)--cycle;
 \fill[fill0](8.5,3)--(9,2.5)--(9.5,3)--(9,3.5)--cycle;
 \fill[fill0](12.5,3)--(13,2.5)--(13.5,3)--(13,3.5)--cycle;
 \fill[fill0](18.5,3)--(19,2.5)--(19.5,3)--(19,3.5)--cycle;
 \fill[fill0](22.5,3)--(23,2.5)--(23.5,3)--(23,3.5)--cycle;
 \foreach \x in {0,...,16}
  \foreach \y in {1,3,5,7}
   \node (\y-\x) at (\x*2,\y) [vertex] {};
 \foreach \x in {0,...,15}
  \foreach \y in {2,4,6}
   \node (\y-\x) at (\x*2+1,\y) [vertex] {};
 \foreach \x in {0,...,15}
  \foreach \y in {3}
   \node (a-\x) at (\x*2+1,\y) [vertex] {};
   
   \replacevertex{(0,7)}{[tvertex][fill1] {$M$}} 
 \foreach \xa/\xb in {0/1,1/2,2/3,3/4,4/5,5/6,6/7,7/8,8/9,9/10,10/11,11/12,12/13,13/14,14/15,15/16}
  \foreach \ya/\yb in {1/2,3/2,3/4,5/4,5/6,7/6}
   {
    \draw [->] (\ya-\xa) -- (\yb-\xa);
    \draw [->] (\yb-\xa) -- (\ya-\xb);
   }
 \foreach \xa/\xb in {0/1,1/2,2/3,3/4,4/5,5/6,6/7,7/8,8/9,9/10,10/11,11/12,12/13,13/14,14/15,15/16}
    \foreach \ya/\yb in {3/3}
   {
     \draw [->] (\ya-\xa)--(a-\xa);
     \draw [->] (a-\xa) --(\ya-\xb);
   }      
 \draw [dashed] (0,0.4) -- (0,7.6); 
 \draw [dashed] (32,0.4) -- (32,7.6);
\end{tikzpicture} } 
\end{equation}\\


The number of possible distributions of the \(8\) indecomposable summands are now significantly reduced. The remaining possible distributions have all been checked, and the result is presented in table \ref{tableE8}. Before proceeding to the table we present the necessary notation, as well as an example of the calculations. 

For the rest of the subsection we will call the bottom \(\tau\)-orbit of the AR-quiver nr 1, the second bottom \(\tau\)-orbit nr 2 and the top \(\tau\)-orbit nr 8. We will now introduce a short notation for the distribution of summands between these three \(\tau\)-orbits. Denote the distribution of indecomposables by the vector \((a,b,c)\), where \(a\) is the indecomposables in \(\tau\)-orbit nr 1, \(b\) the indecomposables in \(\tau\)-orbit nr 2 and \(c\) the indecomposables in \(\tau\)-orbit nr 8.  If a \(2\)-cluster tilting object \(T\) has 2 indecomposables closed under \(\tau^8\) in a \(\tau\)-orbit, it will be denoted 2. If \(T\) has 4 indecomposables such that these are closed under \(\tau^4\) it will be denoted 4. If there are 4 indecomposables in a \(\tau\)-orbit but these indecomposables are not closed under \(\tau^4\) but closed under \(\tau^8\) we will denote these objects by 2+2. 

We now demonstrate an example of the calculations that have been done to achieve table \ref{tableE8}, by considering the cases \((?,2+2,?)\) and \((?,4,?)\). First we consider the situation if there are two indecomposables of \(T\) placed in \(\tau\)-orbit nr 2. This is pictured in figure \ref{E8_eks_del1}.

\begin{figure}[h]
\scalebox{.7}{ \begin{tikzpicture}[baseline=-1.7cm,scale=.49,yscale=-1]
\fill[fill3](2,6.5)--(3,7.5)--(5,7.5)--(8,4.5)
--(11,7.5)--(15,7.5)--(17,5.5)--(19,7.5)--(23,7.5)--(26,4.5)--(29,7.5)--(31,7.5)--(32,6.5)--(26,0.5)--
(23,0.5)--(22,1.5)--(21,0.5)--(13,0.5)--(12,1.5)--(11,0.5)
--(8,0.5)--cycle;

\fill[fill2](18,6.5)--(19,7.5)--(21,7.5)--(24,4.5)--(27,7.5)--(31,7.5)--(32,6.5)--(32,0.5)--(29,0.5)--(28,1.5)--(27,0.5)--(24,0.5)--cycle;
\fill[fill2](0,6.5)--(1,5.5)--(3,7.5)--(7,7.5)--(10,4.5)--(13,7.5)--(15,7.5)--(16,6.5)--(10,0.5)--(7,0.5)--(6,1.5)--(5,0.5)--(0,0.5)--cycle;
 \foreach \x in {0,...,16}
  \foreach \y in {1,3,5,7}
   \node (\y-\x) at (\x*2,\y) [vertex] {};
 \foreach \x in {0,...,15}
  \foreach \y in {2,4,6}
   \node (\y-\x) at (\x*2+1,\y) [vertex] {};
 \foreach \x in {0,...,15}
  \foreach \y in {3}
   \node (a-\x) at (\x*2+1,\y) [vertex] {};
   
 \replacevertex{(1,6)}{[tvertex][fill3] {$M$}} 
 \replacevertex{(17,6)}{[tvertex][fill2]{$N$}}
 \foreach \xa/\xb in {0/1,1/2,2/3,3/4,4/5,5/6,6/7,7/8,8/9,9/10,10/11,11/12,12/13,13/14,14/15,15/16}
  \foreach \ya/\yb in {1/2,3/2,3/4,5/4,5/6,7/6}
   {
    \draw [->] (\ya-\xa) -- (\yb-\xa);
    \draw [->] (\yb-\xa) -- (\ya-\xb);
   }
 \foreach \xa/\xb in {0/1,1/2,2/3,3/4,4/5,5/6,6/7,7/8,8/9,9/10,10/11,11/12,12/13,13/14,14/15,15/16}
    \foreach \ya/\yb in {3/3}
   {
     \draw [->] (\ya-\xa)--(a-\xa);
     \draw [->] (a-\xa) --(\ya-\xb);
   }      
 \draw [dashed] (0,0.4) -- (0,7.6); 
 \draw [dashed] (32,0.4) -- (32,7.6);
\end{tikzpicture} } \caption{Indecomposables with a white background may be part of the same \(2\)-cluster tilting object as \(M\) and \(N\).}\label{E8_eks_del1}
\end{figure}
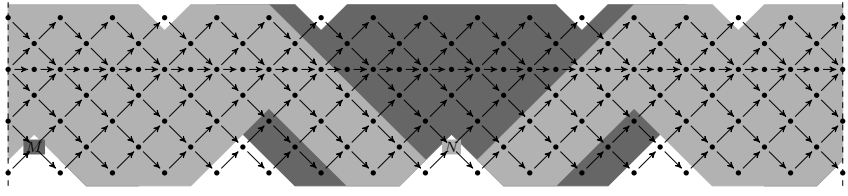

From figure \ref{E8_eks_del1} we find/verify that there can maximally be 4 indecomposables of \(T\) in this \(\tau\)-orbit. If there are 4 indecomposables in this \(\tau\)-orbit then they are closed under \(\tau^4\). This rules out the existence of cases of the form \((?,2+2,?)\). 

Continuing under the assumption that there are 4 indecomposables in this \(\tau\)-orbit that are closed under \(\tau^4\), we have the situation pictured in figure \ref{E8_eks_del2}. From this figure we find that the rest of the summands of \(T\) must be located in \(\tau\)-orbit nr 8. The possible cases at this point are \((2+2,4,0)\) and \((4,4,0)\). Choosing first to focus on the case \((4,4,0)\), we see that this case will occur if one chooses the indecomposables in \(\tau\)-orbit 8 with an arrow going into either \(M,N,O\) or \(R\). The only other possibility is to choose the indecomposables in \(\tau\)-orbit nr. 8 with an arrow from \(M,N,O\) or \(R\). These two choices correspond to the quivers in the top row of table \ref{tableE8}. 

If one instead choose either the indecomposables in \(\tau\)-orbit 8 with an arrow into \(M\) and \(N\), and an arrow from \(O\) and \(R\) one obtains a cluster tilting object with the quiver listed in the second row of table \ref{tableE8}. One may also choose the indecomposables with an arrow from \(M\) and \(N\), and an arrow into \(O\) and \(R\), but this quiver is isomorphic to the first, so there is really only one quiver occuring in this case.

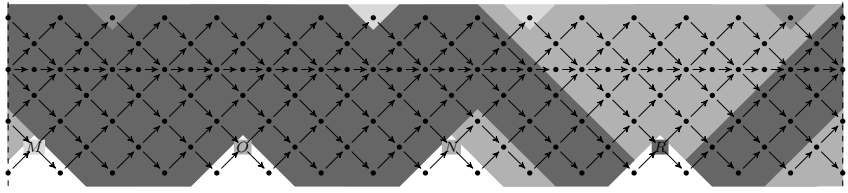
\begin{figure}[h]
\scalebox{.7}{ \begin{tikzpicture}[baseline=-1.7cm,scale=.49,yscale=-1]
\fill[fill1](2,6.5)--(3,7.5)--(5,7.5)--(8,4.5)
--(11,7.5)--(15,7.5)--(17,5.5)--(19,7.5)--(23,7.5)--(26,4.5)--(29,7.5)--(31,7.5)--(32,6.5)--(26,0.5)--
(23,0.5)--(22,1.5)--(21,0.5)--(13,0.5)--(12,1.5)--(11,0.5)
--(8,0.5)--cycle;

\fill[fill12](18,6.5)--(19,7.5)--(21,7.5)--(24,4.5)--(27,7.5)--(31,7.5)--(32,6.5)--(32,0.5)--(29,0.5)--(28,1.5)--(27,0.5)--(24,0.5)--cycle;
\fill[fill12](0,6.5)--(1,5.5)--(3,7.5)--(7,7.5)--(10,4.5)--(13,7.5)--(15,7.5)--(16,6.5)--(10,0.5)--(7,0.5)--(6,1.5)--(5,0.5)--(0,0.5)--cycle;

\fill[fill2](10,6.5)--(11,7.5)--(13,7.5)--(16,4.5)--(19,7.5)--(23,7.5)--(25,5.5)--(27,7.5)--(31,7.5)--(32,7.5)--(32,0.5)--(31,.5)--(30,1.5)--(29,0.5)--(21,0.5)--(20,1.5)--(19,0.5)--(16,0.5)--cycle;
\fill[fill2](0,6.5)--(2,4.5)--(5,7.5)--(7,7.5)--(8,6.5)--(2,0.5)--(0,0.5)--cycle; 

\fill[fill3](26,6.5)--(27,7.5)--(29,7.5)--(32,4.5)--(32,0.5)--cycle;
\fill[fill3](0,4.5)--(3,7.5)--(7,7.5)--(9,5.5)--(11,7.5)--(15,7.5)--(18,4.5)--(21,7.5)--(23,7.5)--(24,6.5)--(18,0.5)--(15,0.5)--(14,1.5)--(13,0.5)--(5,0.5)--(4,1.5)--(3,0.5)--(0,0.5)--cycle;

 \foreach \x in {0,...,16}
  \foreach \y in {1,3,5,7}
   \node (\y-\x) at (\x*2,\y) [vertex] {};
 \foreach \x in {0,...,15}
  \foreach \y in {2,4,6}
   \node (\y-\x) at (\x*2+1,\y) [vertex] {};
 \foreach \x in {0,...,15}
  \foreach \y in {3}
   \node (a-\x) at (\x*2+1,\y) [vertex] {};
   
 \replacevertex{(1,6)}{[tvertex][fill1] {$M$}} 
 \replacevertex{(17,6)}{[tvertex][fill2]{$N$}}
 \replacevertex{(9,6)}{[tvertex][fill2]{$O$}}
 \replacevertex{(25,6)}{[tvertex][fill3]{$R$}}
 \foreach \xa/\xb in {0/1,1/2,2/3,3/4,4/5,5/6,6/7,7/8,8/9,9/10,10/11,11/12,12/13,13/14,14/15,15/16}
  \foreach \ya/\yb in {1/2,3/2,3/4,5/4,5/6,7/6}
   {
    \draw [->] (\ya-\xa) -- (\yb-\xa);
    \draw [->] (\yb-\xa) -- (\ya-\xb);
   }
 \foreach \xa/\xb in {0/1,1/2,2/3,3/4,4/5,5/6,6/7,7/8,8/9,9/10,10/11,11/12,12/13,13/14,14/15,15/16}
    \foreach \ya/\yb in {3/3}
   {
     \draw [->] (\ya-\xa)--(a-\xa);
     \draw [->] (a-\xa) --(\ya-\xb);
   }      
 \draw [dashed] (0,0.4) -- (0,7.6); 
 \draw [dashed] (32,0.4) -- (32,7.6);

\end{tikzpicture} }  \caption{The indecomposable with white background are the ones that may be part of a \(2\)-cluster tilting object togehter with \(M, N, O\) and \(P\).}\label{E8_eks_del2}
\end{figure}

Table \ref{tableE8} lists all quivers of \(2\)-cluster tilting objects of \(\mathcal C_2(E_8)\) such that the corresponding cluster tilting objects is periodic under \(\tau^8\) or \(\tau^4\).

\begin{table}
\centering
\begin{tabular}{|m{2.2cm}|m{1.5cm}|m{9cm}|}
\hline
Distribution in AR-q. & \(\tau^sT=T\) & Corresponding quivers of \(T\) \\ \hline
(4,4,0) & s=4&
 
\scalebox{.8}{\begin{tikzpicture}
\node (a) at (0,0)[vertex]{};
\node (b) at (-1,0)[vertex]{};
\node(c) at (-1,-1)[vertex]{};
\node(d) at (0,-1)[vertex]{};
\node(e) at (0.7,0.7)[vertex]{};
\node(f) at (-1.7,0.7)[vertex]{};
\node(g) at(-1.7,-1.7)[vertex]{};
\node(h) at (0.7,-1.7)[vertex]{};
\node (0) at (0,0.9){};
\node (00) at (0,-1.9){};
\node (000) at (1.3,0){};
\draw[->] (a)--(b);
\draw[->](b)--(c);
\draw[->](c)--(d);
\draw[->](d)--(a);
\draw[->](a)--(e);
\draw[->](b)--(f);
\draw[->](c)--(g);
\draw[->](d)--(h);
\end{tikzpicture}\label{(4,4,0)1}}
 \scalebox{.8}{\begin{tikzpicture}
\node (a) at (0,0)[vertex]{};
\node (b) at (-1,0)[vertex]{};
\node(c) at (-1,-1)[vertex]{};
\node(d) at (0,-1)[vertex]{};
\node(e) at (0.7,0.7)[vertex]{};
\node(f) at (-1.7,0.7)[vertex]{};
\node(g) at(-1.7,-1.7)[vertex]{};
\node(h) at (0.7,-1.7)[vertex]{};

\node (00) at (0,-1.9){};
\node (000) at (1.3,0){};
\draw[->] (a)--(b);
\draw[->](b)--(c);
\draw[->](c)--(d);
\draw[->](d)--(a);
\draw[->](e)--(a);
\draw[->](f)--(b);
\draw[->](g)--(c);
\draw[->](h)--(d);
\end{tikzpicture}\label{(4,4,0)2}}
\\ \hline
(2+2,4,0) &s=8&
\scalebox{.8}{\begin{tikzpicture}
\node (a) at (0,0)[vertex]{};
\node (b) at (-1,0)[vertex]{};
\node(c) at (-1,-1)[vertex]{};
\node(d) at (0,-1)[vertex]{};
\node(e) at (0.7,0.7)[vertex]{};
\node(f) at (-1.7,0.7)[vertex]{};
\node(g) at(-1.7,-1.7)[vertex]{};
\node(h) at (0.7,-1.7)[vertex]{};
\node (0) at (0,0.9){};
\node (00) at (0,-1.9){};
\draw[->] (a)--(b);
\draw[->](b)--(c);
\draw[->](c)--(d);
\draw[->](d)--(a);
\draw[->](a)--(e);
\draw[->](f)--(b);
\draw[->](c)--(g);
\draw[->](h)--(d);
\end{tikzpicture}\label{(2+2,4,0)}} \\ \hline

(2,2,2+2) &s=8&

\scalebox{.75}{\begin{tikzpicture}
\node (a) at (-1,0)[vertex]{};
\node (b) at (0,0)[vertex]{};
\node (c) at (0.7,0.7)[vertex]{};
\node (d) at (1.7,0.7)[vertex]{};
\node (e) at (2.4,0)[vertex]{};
\node (f) at (3.4,0)[vertex]{};
\node (g) at (1.7,-0.7)[vertex]{};
\node (h) at (0.7,-0.7)[vertex]{};
\node (0) at (4,0){};
\node (00) at (0,0.9){};
\node (000) at (0,0.9){};
\node (0000) at (0,-0.3){};
\draw [->](a)--(b);
\draw [->](b)--(c);
\draw[->](c)--(d);
\draw[->](d)--(e);
\draw[->](f)--(e);
\draw[->](e)--(g);
\draw[->](g)--(h);
\draw[->](h)--(b);
\end{tikzpicture}\label{Type(2,2,2+2)1}}
\scalebox{.75}{\begin{tikzpicture}
\node (a) at (-1,0)[vertex]{};
\node (b) at (0,0)[vertex]{};
\node (c) at (0.7,0.7)[vertex]{};
\node (d) at (1.7,0.7)[vertex]{};
\node (e) at (2.4,0)[vertex]{};
\node (f) at (3.4,0)[vertex]{};
\node (g) at (1.7,-0.7)[vertex]{};
\node (h) at (0.7,-0.7)[vertex]{};
\draw [->](b)--(a);
\draw [->](b)--(c);
\draw[->](c)--(d);
\draw[->](d)--(e);
\draw[->](e)--(f);
\draw[->](e)--(g);
\draw[->](g)--(h);
\draw[->](h)--(b);
\end{tikzpicture}\label{Type(2,2,2+2)2}} \\ \hline 
(2+2,2,2) & s=8&
\scalebox{.8}{\begin{tikzpicture}
\node (0) at (0,.3){};
\node(a) at (0,0)[vertex]{};
\node(b) at (1,0)[vertex]{};
\node(c) at(2,0)[vertex]{};
\node(d) at(3,0)[vertex]{};
\node(e) at (0,-1)[vertex]{};
\node(f) at (1,-1)[vertex]{};
\node(g)at (2,-1)[vertex]{};
\node(h) at (3,-1)[vertex]{};
\node (00) at (3.7,0){};
\node (000) at (0,-1.3){};
\draw[->](a)--(b);
\draw[->](b)--(c);
\draw[->](d)--(c);
\draw[->](e)--(f);
\draw[->](g)--(f);
\draw[->](h)--(g);
\draw[->](f)--(b);
\draw[->](c)--(g);
\draw[->](a)--(f);
\draw[->](h)--(c);
\end{tikzpicture}\label{(2+2,2,2)2}}
\scalebox{.8}{\begin{tikzpicture}
\node (0) at (0,.3){};
\node(a) at (0,0)[vertex]{};
\node(b) at (1,0)[vertex]{};
\node(c) at(2,0)[vertex]{};
\node(d) at(3,0)[vertex]{};
\node(e) at (0,-1)[vertex]{};
\node(f) at (1,-1)[vertex]{};
\node(g)at (2,-1)[vertex]{};
\node(h) at (3,-1)[vertex]{};
\node (00) at (0,-1.3){};
\draw[->](a)--(b);
\draw[->](b)--(c);
\draw[->](d)--(c);
\draw[->](e)--(f);
\draw[->](g)--(f);
\draw[->](h)--(g);
\draw[->](f)--(b);
\draw[->](c)--(g);

\draw[->](b)--(e);
\draw[->](g)--(d);
\end{tikzpicture}\label{(2+2,2,2)1}}
\\ \hline

(4,2,2)& s=8&

\scalebox{.8}{\begin{tikzpicture}
\node (0) at (0,.3){};
\node(a) at (0,0)[vertex]{};
\node(b) at (1,0)[vertex]{};
\node(c) at(2,0)[vertex]{};
\node(d) at(3,0)[vertex]{};
\node(e) at (0,-1)[vertex]{};
\node(f) at (1,-1)[vertex]{};
\node(g)at (2,-1)[vertex]{};
\node(h) at (3,-1)[vertex]{};
\node (00) at (3.7,0){};
\node (000) at (0,-1.3){};
\draw[->](b)--(a);
\draw[->](b)--(c);
\draw[->](d)--(c);
\draw[->](e)--(f);
\draw[->](g)--(f);
\draw[->](g)--(h);
\draw[->](f)--(b);
\draw[->](c)--(g);
\draw[->](b)--(e);
\draw[->](g)--(d);
\end{tikzpicture}\label{(4,2,2)1}}
\scalebox{.8}{\begin{tikzpicture}
\node (0) at (0,.3){};
\node(a) at (0,0)[vertex]{};
\node(b) at (1,0)[vertex]{};
\node(c) at(2,0)[vertex]{};
\node(d) at(3,0)[vertex]{};
\node(e) at (0,-1)[vertex]{};
\node(f) at (1,-1)[vertex]{};
\node(g)at (2,-1)[vertex]{};
\node(h) at (3,-1)[vertex]{};
\node (00) at (3.7,0){};
\node (000) at (0,-1.3){};
\draw[->](b)--(a);
\draw[->](b)--(c);
\draw[->](d)--(c);
\draw[->](e)--(f);
\draw[->](g)--(f);
\draw[->](g)--(h);
\draw[->](f)--(b);
\draw[->](c)--(g);
\draw[->](a)--(f);
\draw[->](h)--(c);
\end{tikzpicture}\label{(4,2,2)2}}
\\ \hline
(2+2,0,2+2) & s=8&
\scalebox{.85}{\begin{tikzpicture}
\node (0) at (0,.3){};
\node(a) at (0,0)[vertex]{};
\node(b) at (1,0)[vertex]{};
\node(c) at(2,0)[vertex]{};
\node(d) at(3,0)[vertex]{};
\node(e) at (0,-1)[vertex]{};
\node(f) at (1,-1)[vertex]{};
\node(g)at (2,-1)[vertex]{};
\node(h) at (3,-1)[vertex]{};
\node (00) at (3.7,0){};
\node (000) at (0,-1.3){};
\draw[->](b)--(a);
\draw[->](b)--(c);
\draw[->](c)--(d);
\draw[->](f)--(e);
\draw[->](g)--(f);
\draw[->](g)--(h);
\draw[->](e)--(a);
\draw[->](f)--(b);
\draw[->](c)--(g);
\draw[->](d)--(h);
\draw[->](b)--(e);
\draw[->](g)--(d);
\end{tikzpicture}\label{(2+2,0,2+2)2}}
\scalebox{.85}{\begin{tikzpicture}
\node (0) at (0,.3){};
\node(a) at (0,0)[vertex]{};
\node(b) at (1,0)[vertex]{};
\node(c) at(2,0)[vertex]{};
\node(d) at(3,0)[vertex]{};
\node(e) at (0,-1)[vertex]{};
\node(f) at (1,-1)[vertex]{};
\node(g)at (2,-1)[vertex]{};
\node(h) at (3,-1)[vertex]{};
\node (00) at (3.7,0){};
\node (000) at (0,-1.3){};
\draw[->] (a)--(b);
\draw[->] (b)--(c);
\draw[->](d)--(c);
\draw[->](e)--(f);
\draw[->](g)--(f);
\draw[->](h)--(g);
\draw[->](e)--(a);
\draw[->](f)--(b);
\draw[->](c)--(g);
\draw[->](d)--(h);
\draw[->](b)--(e);
\draw[->](g)--(d);
\end{tikzpicture}\label{(2+2,0,2+2)1}}
\\ \hline
\end{tabular}
\centering\caption{All quivers of \(2\)-cluster tilting objects of \(\mathcal C_2(E_8)\) where the corresponding \(2\)-cluster tilting object is periodic under \(\tau^4\) or \(\tau^8\). The first column gives the distribution of the indecomposable summands of \(T\) in the AR-quiver of \(\mathcal C_2(E_8)\). }
\end{table}\label{tableE8}

\subsection{Summary of results of type \(E_6, E_7\) and \(E_8\).}

Based on the results regardin periodicity of \(2\)-cluster tiltning objects of \(\mathcal C_2(E_6), \mathcal C_2(E_7)\) and \(\mathcal C_2(E_8)\) we obtain the following corollary:

\begin{corollary}
Let \(\mathcal O_F(E_r)=\mathcal D^b(E_r)/F\) for \(r\in\left\{6,7,8\right\}\). Then \(\mathcal O_F(E_r)\) have a \(2\)-cluster tilting object if and only if \(F\) is in the following list:
\begin{itemize}
\item \(F=\tau^{7t}\) for \(E_6\) and \(t\in\mathbb Z\) 
\item \(F=\tau^{7t-1}\left[1\right]\) for \(E_6\) and \(t\in\mathbb Z\)
\item \(F=\tau^{10t}\) for \(E_7\) and \(t\in\mathbb Z\).
\item	\(F=\tau^{4t}\) for \(E_8\) and \(t\in\mathbb Z\).
\end{itemize}
\end{corollary}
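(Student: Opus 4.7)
The plan is to combine the classifications of periodic $2$-cluster tilting objects in $\mathcal C_2(E_6)$, $\mathcal C_2(E_7)$, $\mathcal C_2(E_8)$ obtained in the three preceding subsections (Tables \ref{tableE6}, \ref{tableE8} and the accompanying AR-quiver analyses) with Theorem \ref{correspondence} and Lemma \ref{periodInDerived}, in direct parallel with the proofs of Corollaries \ref{overviewA} and \ref{overviewD}. The key reduction I would begin with is that, by Lemma \ref{periodicity1}, any $2$-cluster tilting subcategory $\mathcal X \subseteq \mathcal D^b(E_r)$ satisfies $\mathbb S_2\mathcal X = \tau[-1]\mathcal X = \mathcal X$, so $[1]\mathcal X = \tau\mathcal X$. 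Consequently, for $F = \tau^a[b]$ the condition $F\mathcal X = \mathcal X$ is equivalent to $\tau^{a+b}\mathcal X = \mathcal X$, which by Lemma \ref{periodInDerived} is in turn equivalent to $\tau^{a+b}\pi_2(\mathcal X) = \pi_2(\mathcal X)$ in the cluster category. This brings the problem entirely into the setting already analysed.

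For the forward direction I would, for each $F$ in the list, produce an $F$-periodic $2$-cluster tilting subcategory of $\mathcal D^b(E_r)$ and invoke Lemma \ref{ImCltilt}. For $E_6$ I take $\mathcal X = \pi_2^{-1}(T)$ for any $T$ appearing in Table \ref{tableE6}, so $\tau^7\mathcal X = \mathcal X$, and both $F = \tau^{7t}$ and $F = \tau^{7t-1}[1]$ preserve $\mathcal X$ by the key reduction. For $E_7$ every $2$-cluster tilting $T$ of $\mathcal C_2(E_7)$ satisfies $\tau^{10}T = T$ because every $\tau$-orbit has length $10$, giving $\tau^{10t}\mathcal X = \mathcal X$. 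For $E_8$ I would take $T$ from the $(4,4,0)$ row of Table \ref{tableE8}, which is $\tau^4$-periodic, yielding $\tau^{4t}\mathcal X = \mathcal X$. Conversely, if $\mathcal O_F(E_r)$ admits a $2$-cluster tilting object, Theorem \ref{correspondence} supplies an $F$-periodic $2$-cluster tilting $\mathcal X \subseteq \mathcal D^b(E_r)$, and by Corollary \ref{correspondenceDbOgCm} its image $T := \pi_2(\mathcal X)$ is $2$-cluster tilting in $\mathcal C_2(E_r)$. Writing $F = \tau^a[b]$ and using that $[1]$ becomes $\tau$ in the cluster category, the relation $F\mathcal X = \mathcal X$ descends to $\tau^{a+b}T = T$; the classifications of the three subsections then force $7 \mid (a+b)$ for $E_6$, $10 \mid (a+b)$ for $E_7$, and $4 \mid (a+b)$ for $E_8$. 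Restricting to the normal forms $b \in \{0,1\}$ considered throughout the paper recovers precisely the entries in the corollary.

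The main obstacle is not in the present corollary, which is essentially bookkeeping, but in the combinatorial work already carried out in the three preceding subsections to establish the tables of periodic $2$-cluster tilting objects. The only subtlety I would want to emphasise in the write-up is that $\mathbb S_2$-periodicity of $2$-cluster tilting subcategories collapses every $F = \tau^a[b]$ periodicity to a pure $\tau$-periodicity, so the full classification of admissible $F$ reduces cleanly to the residue of $a+b$ modulo the period identified in the cluster category; this is what makes the two separate $E_6$ entries $\tau^{7t}$ and $\tau^{7t-1}[1]$ collapse to the single divisibility $7 \mid (a+b)$, while for $E_7$ and $E_8$ the analogous shifted forms fall within the pure $\tau$-family already listed.
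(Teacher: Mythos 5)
Your proposal is correct and takes essentially the same route as the paper: the paper's own proof is a terse appeal to Lemma \ref{ImCltilt} and Lemma \ref{periodInDerived} together with the observation that the orbit categories are cylindrical or (for $E_6$ only) Moebius-shaped, which is exactly your distinction between the pure $\tau^{7t}$ entries and the shifted $\tau^{7t-1}[1]$ entry. Your write-up is in fact more explicit than the paper's, since you spell out the reduction $[1]\mathcal X=\tau\mathcal X$ via $\mathbb S_2$-periodicity and invoke Theorem \ref{correspondence} for the converse direction, both of which the paper leaves implicit.
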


\begin{proof}
For the first two parts of the corollary we note that the AR-quiver of \(\mathcal O_F(E_6)\) may be of two main types, cylindrical or of Moebius-shape.
For the third and fourth parts recall that there is only the possibility of having a cylindrical AR-quiver with functors of the form \(\tau^s\left[t\right]\) for orbit categories of type \(\mathcal O_F(E_7)\) and \(\mathcal O_F(E_8)\). 

The results are then obtained by applying lemma \ref{ImCltilt} and lemma \ref{periodInDerived}.
\end{proof}

\section{The Euclidean and wild cases}\label{wildEuclid}
We have in the previous sections \ref{sectA}, \ref{typeD} and \ref{sectE} studied all the cases of representation finite hereditary cases. In this section we will focus on Euclidean and wild algebras, and argue that the phenomenon that we have observed for the representation-finite cases can never occur for cluster tilting objects of representation-infinite algebras.

First we discuss the Euclidean case. The AR-quiver of a path-algbra of some orientation of a Euclidean diagram contains three main components, a preprojective component,  regular components and a preinjective component. In the Euclidean cases the regular component cosists of tubes. 

The following result enables us to consider cluster tilting objects as tilting objects in a suitable module category.

\begin{theorem}\cite{bmrrt}
Let \(H\) be a tame hereditary algebra, and \(T\) a cluster tilting object of \(\mathcal C_2(H)\). Then there is some algebra \(H'\) derived equivalent to \(H\), such that \(T\) is a tilting object in \(\modf H'\).
\end{theorem}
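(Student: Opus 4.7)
The plan is to realise $T$ as a tilting object in the bounded derived category $\mathcal{D}^b(H)$ and then invoke Rickard's derived Morita theorem to produce the desired $H'$. Concretely, the first step is to lift $T$ to a preimage $\tilde T \in \mathcal{D}^b(H)$ whose indecomposable summands all lie in the standard fundamental domain $\mathcal{F} := \modf H \cup H[1]$; such a lift exists and is unique up to isomorphism because $\pi_2$ restricted to $\mathcal{F}$ induces a bijection on iso-classes of indecomposables. Write $\tilde T = M \oplus P[1]$ with $M \in \modf H$ and $P$ a projective $H$-module.

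The central verification is that $\tilde T$ is a tilting complex in $\mathcal{D}^b(H)$: it should have $n$ non-isomorphic indecomposable summands, where $n$ is the rank of $K_0(H)$, and satisfy $\Hom_{\mathcal{D}^b(H)}(\tilde T, \tilde T[i]) = 0$ for all $i \neq 0$. The count of summands is immediate from the cluster tilting hypothesis combined with the above bijection. For the Ext-vanishing one observes that, because $H$ is hereditary and the summands lie in $\mathcal{F}$, only $i = 1$ can contribute a nonzero group; here one uses the orbit-category formula
\[
\Hom_{\mathcal{C}_2}(\tilde T, \tilde T[1]) = \bigoplus_{k \in \mathbb{Z}} \Hom_{\mathcal{D}^b(H)}(\tilde T, (\tau^{-1}[1])^k \tilde T[1])
\]
to conclude that vanishing of the left-hand side, which follows from $\Ext^1_{\mathcal{C}_2}(T, T) = 0$, forces every summand on the right-hand side, and in particular the $k = 0$ summand, to vanish individually.

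Once $\tilde T$ is seen to be a tilting complex in $\mathcal{D}^b(H)$, Rickard's theorem produces a triangle equivalence $\mathcal{D}^b(H) \simeq \mathcal{D}^b(H')$ with $H' := \End_{\mathcal{D}^b(H)}(\tilde T)^{\mathrm{op}}$. Under this equivalence $\tilde T$ corresponds to the regular module ${}_{H'}H' \in \modf H'$, which is tautologically a tilting module, and so the image of $T$ in $\modf H'$ is a tilting module as required.

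The hard step is the Ext-vanishing. In the tame case $\mathcal{D}^b(H)$ contains the tubular regular components, and $T$ may have summands sitting inside such tubes; one must be careful that in the decomposition of $\Hom_{\mathcal{C}_2}(\tilde T, \tilde T[1])$ each summand really vanishes separately, not only the direct sum. What rescues the argument is that $\pi_2$ is a triangle functor and that at most finitely many terms of the sum are non-zero by local finiteness of $\mathcal{D}^b(H)$ restricted to each tube or to the preprojective/preinjective components; this reduces the obstruction to a finite check, each piece of which is killed by the cluster tilting property applied after a suitable shift by $(\tau^{-1}[1])^k$.
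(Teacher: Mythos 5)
The paper gives no proof of this statement --- it is imported verbatim from \cite{bmrrt} --- so your proposal has to be judged against the argument there, and it contains a genuine gap at its central step: the lift \(\tilde T = M\oplus P[1]\) of \(T\) to the standard fundamental domain is in general \emph{not} a tilting complex in \(\mathcal D^b(H)\). Writing out \(\Hom_{\mathcal D^b(H)}(\tilde T,\tilde T[i])\) for \(i\neq 0\), the only terms not killed by heredity or by projectivity of \(P\) are \(\Hom_H(P,M)\) in degree \(i=1\) and \(\Hom_H(M,P)\) in degree \(i=-1\). The first is a direct summand of \(\Ext^1_{\mathcal C_2}(T,T)=0\) and vanishes, as you say; the second is a direct summand of \(\Hom_{\mathcal C_2}(T,T[-1])\), which is not one of the groups \(\Ext^i_{\mathcal C_2}(T,T)\), \(0<i<2\), that the cluster tilting hypothesis controls, and it does not vanish. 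Concretely, for \(H=k(1\to 2)\) the object with lift \(\tilde T=P_2\oplus P_1[1]\) is cluster tilting in \(\mathcal C_2(A_2)\), yet \(\Hom_H(P_2,P_1)\neq 0\) because \(P_2\) is the socle of \(P_1\); hence \(\Hom_{\mathcal D^b(H)}(\tilde T,\tilde T[-1])\neq 0\) and Rickard's theorem cannot be invoked. Your closing worry about tubes in the tame case is a red herring --- a direct sum of vector spaces vanishes if and only if each summand does, with no finiteness needed --- the failure is already in the \(k=0\), degree \(-1\) term.

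Even if rigidity could be arranged, the conclusion extracted from Rickard would be too weak for the use the paper makes of the theorem in section \ref{wildEuclid}: there \(H'\) must be hereditary (of Euclidean type), so that its module category has preprojective, regular and preinjective components, whereas \(\End_{\mathcal D^b(H)}(\tilde T)^{op}\) is in general not hereditary and the image of \(\tilde T\) under the derived equivalence is the free module, which is a tilting module only tautologically. The actual argument in \cite{bmrrt} proceeds differently: one shows \(\Hom_H(P,M)=0\) from the cluster tilting condition, and then changes the heart rather than the complex --- replacing \(H\) by a hereditary algebra \(H'\) in the same derived equivalence class (via perpendicular categories or reflection/tilting functors) so that the shifted projectives \(P[1]\) become honest \(H'\)-modules, iterating until no summand of the form \(P[1]\) remains; at that point \(T\) is an \(\Ext\)-rigid \(H'\)-module with \(n\) indecomposable summands, hence a tilting module. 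That step of trading \(P[1]\) for a module over a different hereditary algebra is the idea your proposal is missing.
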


We may now use well-known theory about the placement of the indecomposable summands of tilting objects of path-algebras over a Euclidean diagram. 

\begin{lemma}\cite{AssemSimSkon3}(chapter 17)
Let \(H\) be the path-algebra of a Euclidean diagram, and let \(T\) be a tilting object in \(\modf H\). Then \(T\) has at least two indecomposable non-isomorphic summand that do not lie in regular components. 
\end{lemma}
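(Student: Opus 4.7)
The plan is to use the well-known trichotomy decomposition of $\modf H$ for a tame hereditary $H$ and to bound the number of regular summands of $T$ by a numerical invariant. Recall that $\modf H$ decomposes into a preprojective component $\mathcal{P}$, a preinjective component $\mathcal{I}$, and a regular part $\mathcal{R}$ consisting of a $\mathbb{P}^1(k)$-parametrised family of tubes $\mathcal{T}_\lambda$ of ranks $r_\lambda \geq 1$, with only finitely many $\lambda$ satisfying $r_\lambda>1$. Since $T$ is a tilting module over a connected hereditary algebra with $n$ simples, it has exactly $n$ pairwise non-isomorphic indecomposable summands. Accordingly, decompose $T = T_{\mathcal P} \oplus T_{\mathcal R} \oplus T_{\mathcal I}$, and write $T_{\mathcal R} = \bigoplus_\lambda T_{\mathcal R,\lambda}$ where $T_{\mathcal R,\lambda}$ collects the summands lying in the tube $\mathcal{T}_\lambda$.

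The first key step is to show that in each tube $\mathcal{T}_\lambda$ of rank $r_\lambda$, the number of pairwise non-isomorphic indecomposable summands of $T$ inside $\mathcal{T}_\lambda$ is at most $r_\lambda - 1$. This is a standard consequence of the combinatorics of tubes: an indecomposable $M$ in a tube of rank $r$ has $\Ext^1_H(M,N)\neq 0$ whenever $N$ lies on the same ray as $\tau M$ in a suitable range, and one shows by direct inspection of the quasi-simple layer that any set of pairwise $\Ext^1$-orthogonal indecomposables in $\mathcal{T}_\lambda$ has cardinality at most $r_\lambda-1$.

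The second key step is the numerical identity $\sum_\lambda (r_\lambda - 1) = n - 2$, valid for any tame hereditary algebra with $n$ simple modules (the sum effectively runs over the finitely many non-homogeneous tubes). Combining this with the first step, we obtain
\[
 \#\{\text{non-isomorphic summands of } T_{\mathcal R}\} \;\leq\; \sum_\lambda (r_\lambda - 1) \;=\; n - 2.
\]
Subtracting from the total count, $T_{\mathcal P}\oplus T_{\mathcal I}$ has at least $n - (n-2) = 2$ non-isomorphic indecomposable summands, which are by construction not in any regular component. This is the desired conclusion.

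The main obstacle, and the only non-trivial ingredient, is the bound $r_\lambda-1$ on the size of an $\Ext$-orthogonal antichain in a tube of rank $r_\lambda$; the rest of the argument is just bookkeeping against the invariant $n-2$. Since the statement is quoted from \cite{AssemSimSkon3}, one may alternatively cite this bound and the identity $\sum(r_\lambda-1)=n-2$ directly from the standard tame-hereditary literature.
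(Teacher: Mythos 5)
The paper gives no proof of this lemma --- it is quoted verbatim from the cited reference --- so there is nothing internal to compare against; your argument is the standard one found there. It is correct: a tilting module over a connected hereditary algebra with $n$ simples has exactly $n$ pairwise non-isomorphic indecomposable summands, these are rigid and pairwise $\Ext$-orthogonal, a stable tube of rank $r_\lambda$ can accommodate at most $r_\lambda-1$ such summands (in particular none in the homogeneous tubes), and the identity $\sum_\lambda(r_\lambda-1)=n-2$ for Euclidean type then leaves at least two summands outside the regular part. The only ingredient that deserves an explicit citation rather than the brief gesture you give is the bound $r_\lambda-1$ on an $\Ext$-orthogonal family of rigid indecomposables in a rank-$r_\lambda$ tube, but that is exactly the content of the chapter the paper points to, so the proof stands.
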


As a consequence of this lemma, every cluster tilting object \(T\) has an indecomposable summand \(T_p\) lying in the preprojective component or the preinjective component. Since there are infinitely many objects in either component it is impossible to find an integer \(s\) such that \(\tau^sT=T\).

Now let us consider the case of a path algebra over a wild quiver. In this case all the regular components of the AR-quiver are of the type \(\mathbb ZA_\infty\). It follows that in these cases there is no integer \(s\) such that \(\tau^sT=T\) for any \(2\)-cluster tilting object \(T\).


\bibliographystyle{plain}
\bibliography{bibliografi}

\end{document}